\newcommand{\dbtilde}[1]{\accentset{\approx}{#1}}
\numberwithin{theorem}{section}
\def\jump#1{\llbracket #1 \rrbracket }
\def\average#1{\lbrace\!\lbrace #1  \rbrace\!\rbrace }
\def\widebreve{\mathpalette\wide@breve}
\def\wide@breve#1#2{\sbox\z@{$#1#2$}%
	\mathop{\vbox{\m@th\ialign{##\crcr
				\kern0.08em\brevefill#1{0.8\wd\z@}\crcr\noalign{\nointerlineskip}%
				$\hss#1#2\hss$\crcr}}}\limits}
\def\brevefill#1#2{$\m@th\sbox\tw@{$#1($}%
	\hss\resizebox{#2}{\wd\tw@}{\rotatebox[origin=c]{90}{\upshape(}}\hss$}
\newcommand{\TheTitle}{A new discretely divergence-free positivity-preserving high-order finite volume method for ideal MHD equations} %multidimensional 
\newcommand{\dt}{\Delta t}
\newcommand{\dx}{\Delta x}
\newcommand{\dy}{\Delta y}
\newcommand{\dd}{\mathrm{d}}
\newcommand{\halfone}{\frac{1}{2}}
\newcommand{\alphaLF}{\alpha^{\rm{LF}}}
\title{{\TheTitle}\thanks{This work is partially supported by National Natural Science Foundation of China (Grant No.~12171227) and Shenzhen Science and Technology Program (Grant No. RCJC20221008092757098).}} %\thanks{Submitted to the editors on \today.}\thanks{This work is partially supported by National Natural Science Foundation of China (grant No.~12171227).}
\author{
	Shengrong Ding
	\thanks{Department of Mathematics and SUSTech International Center for Mathematics, Southern University of Science and Technology, and National Center for Applied Mathematics Shenzhen (NCAMS), Shenzhen 518055, China.}
	\and
	Kailiang Wu \footnotemark[2]
	\thanks{Corresponding author. e-mail: {\tt wukl@sustech.edu.cn}.}
}
\begin{document}
	
	\maketitle
	
	%\vspace{-5mm}	
	
	\begin{abstract}
This paper proposes and analyzes a novel efficient high-order finite volume method for the ideal magnetohydrodynamics (MHD). As a distinctive feature, the method simultaneously preserves two critical physical constraints: a discretely divergence-free (DDF) constraint on the magnetic field and the positivity-preserving (PP) property, which ensures the positivity of density, pressure, and internal energy. To enforce the DDF condition, we design a new discrete projection approach that projects the reconstructed point values at the cell interface into a DDF space, without using any approximation polynomials. This projection method is highly efficient, easy to implement, and particularly suitable for standard high-order finite volume WENO methods, which typically return only the point values in the reconstruction. Moreover, we also develop a new finite volume framework for constructing provably PP schemes for the ideal MHD system. The framework comprises the discrete projection technique, a suitable approximation to the Godunov--Powell source terms, and a simple PP limiter. We provide rigorous analysis of the PP property of the proposed finite volume method, demonstrating that the DDF condition and the proper approximation to the source terms eliminate the impact of magnetic divergence terms on the PP property. The analysis is challenging due to the internal energy function's nonlinearity and the intricate relationship between the DDF and PP properties. To address these challenges, the recently developed geometric quasilinearization approach is adopted, which transforms a nonlinear constraint into a family of linear constraints.  Finally, we validate the effectiveness of the proposed method through several benchmark and demanding numerical examples. The results demonstrate that the proposed method is robust, accurate, and highly effective, confirming the significance of the proposed DDF projection and PP techniques. 
	\end{abstract}
	
	%both a discretely  divergence-free (DDF) constraint on the magnetic field and the positivity-preserving (PP) property, which ensures the positivity of density, pressure, and internal energy.
	
	\begin{keywords}
		compressible MHD, 
		positivity-preserving, 
		divergence-free, 
		finite volume method, 
		high-order accuracy, 
		hyperbolic conservation laws
	\end{keywords}
	\begin{AMS}
		65M08, 65M12, 35L65, 76W05
	\end{AMS}

\section{Introduction}	

%The wide applications of magnetohydrodynamics (MHD) equations in plasma physics,  space physics,  and astrophysics motivate the development of efficient and robust high-order accurate numerical schemes for these equations. 
In this paper, we focus on the development of efficient and robust high-order accurate numerical schemes for 
the 
 ideal compressible MHD equations, which can be written into a system of hyperbolic conservation laws:  
\begin{equation}\label{eq:MHD}
	\mathbf{U}_t + \nabla \cdot \mathbf{F}(\mathbf{U})={\bf 0},
\end{equation} 
where $t$ denotes the time. The conservative vector $ \mathbf{U} = \left(\rho, \mathbf{m}, \mathbf{B}, E \right)^{\top}$ takes values in $\mathbb{R}^8$, with $\rho$ denoting the density, $\mathbf{m}=(m_1,m_2,m_3)$ being momentum vector, $\mathbf{B}=(B_1,B_2,B_3)$ standing for the magnetic field, and $E$ representing the total energy. In the $d$-dimensional space, we have 
$
	\nabla \cdot \mathbf{F} = \sum_{i=1}^{d} \frac{\partial \mathbf{F}_i}{\partial x_i}
$ 
with $(x_1,\cdots,x_d)$ denoting the spatial coordinate variables, 
and the flux $\mathbf{F}=(\mathbf{F}_1, \cdots, \mathbf{F}_d)$  is defined by
$$
\mathbf{F}_i(\mathbf{U})=\Big( m_i, m_i \mathbf{v} - B_i \mathbf{B} + p_{tot} \mathbf{e}_i, v_i \mathbf{B} - B_i \mathbf{v}, v_i(E+p_{tot}) - B_i (\mathbf{v} \cdot \mathbf{B}) \Big)^{\top},
$$
where  $\mathbf{v}=(v_1,v_2,v_3)=\mathbf{m}/\rho$ denotes the fluid velocity, $p_{tot}=p+\frac12 {| \mathbf{B} |}^2$ is the total pressure including the thermal pressure $p$ and the magnetic pressure, and the vector $\mathbf{e}_i$ is the $i$th row of the identity matrix of size $3\times 3$. The total energy $E=\rho e + \frac{1}{2}\rho |\mathbf{v}|^2 + \frac12 {| \mathbf{B} |}^2$ consists of internal, kinetic, and magnetic energies, where $e$ is the specific internal energy. 
In addition, an equation of state (EOS) is required to close the above MHD equations. 
A general EOS can be formulated as $e=e(\rho,p)$ and typically satisfies $e>0 \Leftrightarrow p>0$ when $\rho >0$ (cf.~\cite{Wu2017a,WuShu2018,WuShu2019}). 
This condition is assumed in this paper, and it satisfied by the widely-used ideal EOS $e=p/(\rho(\gamma -1))$, where 
$\gamma \in (1,+\infty)$ is a constant and denotes the adiabatic index.

%The ideal MHD system \eqref{eq:MHD} possesses nonlinear hyperbolic characteristics, so that its solutions may involve discontinuities even if the initial conditions are smooth. This makes it difficult to develop accurate and robust numerical methods. 
%Besides, t
The magnetic field $\mathbf{B}$ should satisfy a divergence-free (DF) constraint  
\begin{equation}\label{eq:DF}
	\nabla \cdot \mathbf{B} := \sum_{i=1}^{d} \frac{\partial B_i}{\partial x_i} = 0.
\end{equation}
 In numerical simulations, preserving this DF constraint is crucial, as violation of it can cause numerical instability and generate nonphysical structures in the approximated solutions (cf.~\cite{Brackbill1980,Evans1988,Toth2000,Li2005}). 
While in the one-dimensional case, the DF constraint can be easily preserved by ensuring $B_1$ is constant, in the multidimensional cases, exact preservation of the DF constraint in numerical simulations is very challenging. 
Over the past few decades, various numerical techniques have emerged to handle the DF constraint \eqref{eq:DF}. 
One of the earliest methods is the projection approach \cite{Brackbill1980}, which is a post-processing divergence correction procedure utilizing Hodge decomposition to project the non-divergence-free magnetic field onto a DF subspace through solving an elliptic Poisson equation. Another widely used technique is the constrained transport (CT) method, initially proposed in \cite{Evans1988} for simulating MHD flows. 
The CT method maintains a discrete version of the DF condition on staggered grids, and numerous variants have been developed by researchers within different frameworks, as documented in \cite{devore1991flux,Dai1998,Londrillo2004divergence,Gardiner2005,xu2016divergence}. 
Unstaggered CT methods have also been developed (see, e.g.,
\cite{Rossmanith2006,helzel2011unstaggered,mishra_tadmor_2012,helzel2013high,Christlieb2014}), typically based on numerically evolving the magnetic potential and computing the DF magnetic field via the (discrete) curl of the magnetic potential. 
Furthermore, locally DF discontinuous Galerkin methods have been developed to enforce the zero divergence of the magnetic field within each cell in \cite{Li2005}. More recently, globally DF high-order methods have been proposed to maintain the exact zero divergence of the magnetic field within the finite-volume or (central) discontinuous Galerkin framework, as detailed in \cite{Balsara2009,Li2011,Li2012,Fu2018}. 
Another class of methods for mitigating divergence errors are the eight-wave schemes, which were introduced by Powell \cite{Powell1995,powell1997approximate} %,POWELL1999284
 as a discretization of Godunov's modified MHD system  
\begin{equation}\label{eq:ModMHD} 
	\mathbf{U}_t + \nabla \cdot \mathbf{F}(\mathbf{U})=-(\nabla \cdot \mathbf{B}) \mathbf{S}(\mathbf{U}) ,
\end{equation}
where $\mathbf{S}(\mathbf{U})=\left(0, \mathbf{B}, \mathbf{v}, \mathbf{v}\cdot \mathbf{B}\right)^\top$. 
This modified model includes nonconservative source terms, referred to as Godunov--Powell source terms, that are proportional to the divergence of the magnetic field. By including these source terms, the modified model becomes Galilean invariant \cite{Dellar2001}, symmetrizable \cite{Godunov1972}, and useful in constructing entropy stable schemes (see, e.g., \cite{Chandrashekar2016}). %,LiuShuZhang2017
The eight-wave schemes do not enforce exact divergence-free solutions but instead aim to advect the magnetic divergence errors with the flow. This makes them effective at controlling these errors, although the nonconservative nature of the Godunov--Powell modified MHD equations can introduce certain drawbacks \cite{Toth2000}. 
 Another approach for dealing with divergence errors is the hyperbolic divergence-cleaning method \cite{Dedner2002}, which uses a mixed hyperbolic-parabolic equation to damp out the divergence errors. %,SUSANTO2013141

Another challenge in the numerical simulations of MHD is to preserve 
the positivity of density, pressure, and internal energy. In other words,  
%In physics, the density, pressure, and internal energy should take positive values. In the other words, 
the conservative variables $\bf U$ should be preserved in the admissible state set 
\begin{equation}\label{eq:G}
	G = \left\{
	\mathbf{U}=(\rho, \mathbf{m}, \mathbf{B}, E)^\top:~ \rho>0, ~  \mathcal{E}(\mathbf{U}):=E-\frac{1}{2}\left(\frac{|\mathbf{m}|^2}{\rho} + |\mathbf{B}|^2  \right) >0 \right\},
\end{equation} 
where $\mathcal{E}(\mathbf{U})=\rho e$ represents the internal energy. (Notice that the function $\mathcal{E}(\mathbf{U})$ is concave with respect to $\bf U$ when $\rho >0$, implying that $G$ is a convex set.)  
Positivity preservation is a critical requirement for developing reliable numerical methods for MHD. Many existing schemes for MHD often produce negative values for density or pressure, particularly in the presence of strong discontinuities, high Mach numbers, low internal energy, low plasma beta, low density, and/or strong magnetic fields. Negative density or pressure can cause the loss of hyperbolicity of the system, leading to severe numerical instability and simulation breakdown. Although this issue also occurs in pure hydrodynamic cases, it is more severe in MHD due to the influence of magnetic divergence errors on positivity \cite{Wu2017a}. 
In the last two decades, researchers have devoted some efforts to mitigating this issue; see, e.g., \cite{BalsaraSpicer1999a,Waagan2009,Balsara2012,cheng,Christlieb,Christlieb2016,liu2021new}  
and recently developed provably positivity-preserving (PP) schemes \cite{Wu2017a,WuShu2018,WuShu2019,WuShu2020NumMath,WuJiangShu2022}. 
The early efforts in this direction were devoted to designing robust multi-state approximate Riemann solvers \cite{Janhunen2000,Bouchut2007,Bouchut2010,klingenberg2010relaxation} for the 1D ideal MHD equations. Waagan proposed  %,MIYOSHI2005315
a second-order PP MUSCL-Hancock scheme \cite{Waagan2009} based on relaxation Riemann solvers of \cite{Bouchut2007,Bouchut2010}, whose robustness was further demonstrated through benchmark numerical tests in \cite{waagan2011robust}. 
In recent years, remarkable progresses have been made in seeking high-order PP or bound-preserving methods for conservation laws \cite{zhang2010,zhang2010b,zhang2011b,Balsara2012,cheng,Xu2014,xiong2016parametrized,ZHANG2017301,WuTangM3AS,VegtXu2019,Kuzmin2022Bound}. %,WuTang2015,zhang2012maximum,
Christlieb et al.~\cite{Christlieb,Christlieb2016} developed high-order PP finite difference schemes for ideal MHD, which utilize parametrized flux-correction limiters \cite{Xu2014,xiong2016parametrized} and rely on the presumed PP property of the Lax--Friedrichs scheme (later proven rigorously in \cite{Wu2017a}). 
In high-order finite volume or discontinuous Galerkin (DG) methods, the PP property may be lost in two situations  \cite{zhang2010,zhang2010b}: when the reconstructed or DG solution polynomials fail to be positive, or when the cell averages become negative during the updating process. While PP limiters \cite{Balsara2012,cheng} can be used to recover positivity in the first case, ensuring the positivity of the cell averages in the updating process is more challenging and critical to obtain a genuinely PP scheme. 
The validity of the PP limiters \cite{Balsara2012,cheng} relies on the positivity of the cell averages in the updating process. However, this was not rigorously proven for the methods in \cite{Balsara2012,cheng}. Only the 1D methods in \cite{cheng} were formally shown to be PP by invoking some assumptions on the exact Riemann solutions, while the multi-dimensional methods in \cite{cheng} were only conjectured to be PP. 
Given that finite numerical tests alone may not be sufficient to demonstrate the PP property thoroughly and genuinely in all cases, the development of {\em provably PP} schemes  for MHD is essential. 
%\cite{Wu2017a,WuShu2018,WuShu2019,WuShu2020NumMath}

In recent studies \cite{Wu2017a,WuShu2018,WuShu2019,WuJiangShu2022}, important progress has been made in  the development of high-order {\em provably PP} schemes for the ideal MHD system. 
A significant finding in these studies is the close relationship between the PP property, which is an algebraic feature, and the DF constraint, which is a differential restriction. This connection has been established both at the continuous and discrete levels \cite{Wu2017a,WuShu2018,WuJiangShu2022}. 
In particular, the PP property of finite volume and DG schemes for the standard MHD system is closely tied to a discrete DF condition, as proven in \cite{Wu2017a}. 
Furthermore, violating this discrete DF condition even slightly may cause cell averages to lose their PP property in the updating process \cite[Theorem 4.1 and Remark 4.4]{Wu2017a}. 
In \cite[Appendix A]{WuShu2018}, it was shown that even the exact smooth solutions of the standard MHD system \eqref{eq:MHD} may not be PP if the continuous DF condition \eqref{eq:DF} is slightly violated. 
 However, this issue does not occur in the modified MHD system \eqref{eq:ModMHD}, as its exact smooth solutions are always PP, regardless of whether the DF constraint is satisfied or not \cite[Proposition 1]{WuShu2019}. 
 Exploiting these insights, researchers have systematically developed high-order provably PP schemes for the modified MHD system \cite{WuShu2018,WuShu2019,WuJiangShu2022}.  See also \cite{WuShu2020NumMath,liu2021new,zhang2021provably} for a few applications or extensions.

The purpose of this paper is to design and analyze a new high-order robust finite volume method for solving the ideal MHD system in multiple dimensions. This method maintains both a discrete DF constraint and the desirable PP property, making it unique compared to existing methods. The contributions and novelty of this work are summarized as follows:
\begin{itemize}
	\item We propose a novel discrete projection technique that exactly enforces a discrete version of the DF condition.  
	As a notable feature, this projection method modifies the reconstructed point values at the cell interface and projects them into a discretely DF (DDF) space.   Such a ``discrete'' projection approach is significantly different from the existing ``continuous'' DF methods, which typically enforces the piecewise polynomial solutions in a DF function space (see, for example, \cite{Balsara2004,Li2005,Balsara2009,Li2011,xu2016divergence,Fu2018,liu2021new}).  Thanks to this
	feature, our DDF projection technique is particularly suitable for standard high-order finite volume WENO methods, in which the reconstruction procedure typically returns only the point values instead of approximation polynomials. This makes the proposed DDF projection  very efficient and easy to implement. We provide insights into the projection technique and prove that the DDF projection procedure preserves the high-order accuracy of the reconstruction.
	\item We design a new finite volume framework for constructing provably PP schemes for the ideal MHD system. The key ingredients of the framework include the DDF projection, a suitable approximation to the Godunov--Powell source terms, and 
	a simple PP limiter. Due to the intrinsic relation between the PP and DDF properties, 
	the cell averages may lose their PP property in the updating process if the DDF projection is not used or if the Godunov--Powell source terms are not discretized appropriately. Yet, seeking a qualified source approximation is highly nontrivial. Based on our rigorous PP analysis, we devise a novel source approximation, which differs from those in  \cite{WuShu2018,WuShu2019,WuJiangShu2022} and leads to a milder PP CFL condition. We also present a PP limiter to enforce the PP property of the reconstructed point values without using any approximation polynomials, as a generalization of the simple maximum-principle-preserving limiter \cite{zhang2011b} for scalar conservation law. Furthermore, the PP limiter does not destroy the DDF condition enforced by the projection.    
	 \item  We provide rigorous analysis of the PP property of our finite volume method. 
	 The analysis is challenging due to the nonlinearity of  $\mathcal{E}(\mathbf{U})$ and the relation of DDF condition to the PP property.  
	 We overcome these challenges by using the recently developed geometric quasilinearization (GQL) approach \cite{Wu2017a,WuShu2021GQL}.  The key idea of GQL is to equivalently transfer the nonlinear constraint 
	 $\mathcal{E}(\mathbf{U})>0$ in \eqref{eq:G} into a family of linear constraints by introducing extra auxiliary variables. Based on GQL and technical estimates, our numerical analysis shows that the enforced DDF condition and our suitable approximation to the Godunov--Powell source terms  eliminate the impact of magnetic divergence terms on the PP property. 
	 \item We implement the new second-order and fifth-order DDFPP finite volume schemes and demonstrate their robustness through several benchmark and demanding numerical experiments. Our numerical results also validate the importance of the proposed DDF projection and PP techniques. 
\end{itemize}
%The paper is organized as follows. We present the framework and computational details of our new DDFPP finite volume method  
% in \cref{Numericalmethod}, including the reviews of linear and WENO reconstructions in \cref{Rec:Sec}, the design of DDF projection in \cref{DivF:Sec}, the PP limiter in \cref{PCP:limiter}. The rigorous proof of the PP property of the proposed schemes is given in \cref{PCP:Sec}. Some numerical experiments are provided in \cref{Numericalexperiments}. 
%The concluding remarks are presented in \cref{Conclusions}. 

\section{New DDFPP finite volume method}\label{Numericalmethod}

This section presents the framework and computational details of our new finite volume method, 
which preserves the desired positivity and a DDF constraint, 
 for the 2D ideal MHD system. For better readability, the rigorous proof of the PP property, which is highly technical and nontrivial, will be given in \cref{PCP:Sec}.   
It is worth noting that our numerical method and analysis are directly extensible to the 3D case. 
%is quite straightforward and thus omitted. 

%This section develops a new high-order provably PP finite volume method for the Godunov form \eqref{eq:ModMHD} of the ideal MHD equations. 
%The proposed method can not only guarantee PP property, but also maintain weakly DF condition via a simple limiter while need not to reconstruct the approximate solution polynomials from the cell averages by a special technique. 
%Here, we present the scheme for two-dimensional case $(d=2)$, and will omit the three-dimensional case $(d=3)$ which can be straightforwardly extended from 2D.

Let the symbols $(x,y)$ represent the 2D spatial coordinates for convenience. 
Assume that the computational domain is partitioned into a uniform rectangular mesh with cells 
$\{ I_{ij}=(x_{i-\halfone},x_{i+\halfone})\times(y_{j-\halfone}, y_{j+\halfone}) \}$. 
Let $\dx$ and $\dy$ denote the constant spatial step-sizes in the $x$ and $y$ directions, respectively. 
 The center of the cell $I_{ij}$ is denoted as $(x_i,y_j)$ with $x_i = \frac{1}{2}(x_{i-\halfone}+x_{i+\halfone})$ and $y_j = \frac{1}{2}(y_{j-\halfone}+y_{j+\halfone})$. 
Integrating the 2D MHD equations \eqref{eq:ModMHD} over $I_{ij}$ gives 
\begin{equation}\label{eq:IntForm}
	\begin{aligned}
		&\frac{\mathrm{d}}{\mathrm{d}t} \int_{I_{ij}} \mathbf{U} \mathrm{d}x\mathrm{d}y=  -\int_{y_{j-\halfone}}^{y_{j+\halfone}} (\mathbf{F}_1(\mathbf{U}(x_{i+\halfone},y,t))-\mathbf{F}_1(\mathbf{U}(x_{i-\halfone},y,t))) \mathrm{d}y  \\ 
		&-\int_{x_{i-\halfone}}^{x_{i+\halfone}}  (\mathbf{F}_2(\mathbf{U}(x,y_{j+\halfone},t))-\mathbf{F}_2(\mathbf{U}(x,y_{j-\halfone},t))) \mathrm{d}x 
		-\int_{I_{ij}} (\nabla \cdot \mathbf{B}) \mathbf{S}(\mathbf{U}) \mathrm{d}x\mathrm{d}y.
	\end{aligned}
\end{equation}

\subsection{Outline and key ingredients of the DDFPP method} 
In order to obtain a high-order accurate scheme, the flux integrals in \eqref{eq:IntForm} on the cell edges   should be approximated by using a quadrature rule of sufficient accuracy. 
For example, to achieve the $k$th order accuracy,  one can employ the $Q$-point Gauss quadrature with $Q=\lceil \frac{k}{2} \rceil$ or the $Q$-point Gauss--Lobatto quadrature with $Q=\lceil \frac{k+2}{2} \rceil$. 
Let $\mathbb{Q}_i^{x}:=\{x_i^{\mu}\}_{\mu=1}^{Q}$ and $\mathbb{Q}_j^{y}:=\{y_j^{\mu}\}_{\mu=1}^{Q}$ denote the quadrature nodes in the intervals $[x_{i-\halfone},x_{i+\halfone}]$ and $[y_{j-\halfone},y_{j+\halfone}]$, respectively. Let $\{ \omega_{\mu} \}_{\mu=1}^{Q}$ be the corresponding quadrature weights, which are normalized such that $\sum_{\mu=1}^{Q} \omega_{\mu} =1$.

Let $\overline{\mathbf{U}}_{ij}$ be the numerical approximation to 
$\frac{1}{\Delta x \Delta y} \int_{I_{ij}} \mathbf{U} \mathrm{d}x\mathrm{d}y$. 
From \eqref{eq:IntForm}, we can derive a semi-discrete high-order finite volume method for 2D MHD system \eqref{eq:ModMHD} as 
\begin{equation}\label{FV:DiscreteForm}
	\begin{aligned}
		\frac{\mathrm{d} \overline{\mathbf{U}}_{ij}(t)}{\mathrm{d}t} =  
		-\frac{1}{\dx} \left( \widehat{\mathbf{F}}_{1,i+\halfone,j}-\widehat{\mathbf{F}}_{1,i-\halfone,j} \right)  
		-\frac{1}{\dy} \left( \widehat{\mathbf{F}}_{2,i,j+\halfone}-\widehat{\mathbf{F}}_{2,i,j-\halfone} \right) 
		+{\mathbf{S}}_{ij}, 
	\end{aligned}
\end{equation}
where the numerical fluxes are defined by 
\begin{equation}\label{FV:Flux}
	\resizebox{0.92\hsize}{!}{$
		\begin{aligned}
			\widehat{\mathbf{F}}_{1,i+\halfone,j} := \sum_{\mu=1}^{Q} \omega_{\mu} 
			\widehat{\mathbf{F}}_1\left(\mathbf{U}_{i+\halfone,j}^{-,\mu}, \mathbf{U}_{i+\halfone,j}^{+,\mu} \right), \quad
			\widehat{\mathbf{F}}_{2,i,j+\halfone} := \sum_{\mu=1}^{Q} \omega_{\mu} 
			\widehat{\mathbf{F}}_2\left(\mathbf{U}_{i,j+\halfone}^{\mu,-}, \mathbf{U}_{i,j+\halfone}^{\mu,+} \right)
		\end{aligned}
		$}
\end{equation}
as approximations to 
$
\frac{1}{\dy} \int_{y_{j-\halfone}}^{y_{j+\halfone}} \mathbf{F}_1 (\mathbf{U}(x_{i+\halfone},y,t)) \mathrm{d}y
$ and 
$
\frac{1}{\dx} \int_{x_{i-\halfone}}^{x_{i+\halfone}} \mathbf{F}_2(\mathbf{U}(x,y_{j+\halfone},t)) \mathrm{d}x 
$, respectively, and ${\mathbf{S}}_{ij}$ is a suitable approximation to $\frac{1}{\dx \dy}\int_{I_{ij}} (-\nabla \cdot \mathbf{B}) \mathbf{S}(\mathbf{U}) \mathrm{d}x\mathrm{d}y$ which will be discussed later. 
The notations $\widehat{\mathbf{F}}_1(\cdot,\cdot)$ and $\widehat{\mathbf{F}}_2(\cdot,\cdot)$ 
denote the numerical flux functions. In this paper, we adopt the Lax--Friedrichs flux
\begin{equation}\label{eq:LFFlux}
	\widehat{\mathbf{F}}_\ell(\mathbf{U}^-, \mathbf{U}^+) = \frac{1}{2} \left(\mathbf{F}_\ell(\mathbf{U}^-) + \mathbf{F}_\ell(\mathbf{U}^+) - \alphaLF_\ell (\mathbf{U}^+-\mathbf{U}^-)\right),  \qquad  \ell=1,2
\end{equation}
with $\alphaLF_\ell$, $\ell=1,2$, denoting the numerical viscosity parameters. Some other numerical fluxes such as the HLL flux can also be used in our framework. 
The quantities $\mathbf{U}_{i+\halfone,j}^{\pm,\mu}$ and $\mathbf{U}_{i,j+\halfone}^{\mu,\pm}$ in \eqref{FV:Flux} denote 
the high-order accurate approximations to the point values of the solution at $(x_{i+\halfone}^{\pm},y_j^{\mu})$ and $(x_{i}^{\mu},y_{j+\halfone}^{\pm})$, respectively, 
and they are typically reconstructed from the cell averages $\{ \overline{\mathbf{U}}_{ij} \}$.

%Our high-order DDFPP finite volume method is built on the following key ingredients. 
%\begin{itemize}
%	\item 
	Based on our numerical analysis (see \cref{PCP:Sec}), we find that  
	in order to achieve the PP property, 
	the values $\mathbf{U}_{i+\halfone,j}^{\pm,\mu}$ and $\mathbf{U}_{i,j+\halfone}^{\mu,\pm}$ should be reconstructed very carefully such that the following conditions \eqref{eq:DDF}--\eqref{LF:cond1} are satisfied simultaneously: 
	\begin{equation}\label{eq:DDF}
			\nabla_h \cdot \mathbf{B}_{ij} := \sum_{\mu=1}^{Q} \omega_{\mu}
			\left( \frac{ (B_{1})_{i+\halfone,j}^{-,\mu} -   (B_{1})_{i-\halfone,j}^{+,\mu}}{\dx}
			+ \frac{ (B_{2})_{i,j+\halfone}^{\mu,-}  -  (B_{2})_{i,j-\halfone}^{\mu,+} }{\dy} \right)=0,
	\end{equation}
    and 
    \begin{equation}\label{LF:cond1}
    	\resizebox{0.92\hsize}{!}{$
    		\begin{cases}
    			\mathbf{U}_{i+\halfone,j}^{\pm,\mu}\in G, \quad \mathbf{U}_{i,j+\halfone}^{\mu,\pm} \in G \qquad \forall i,j,\mu, \\[2mm]
    			\displaystyle
    			{\bf \Pi}_{ij}=\frac{\overline{\mathbf{U}}_{ij} - \frac{\widehat{\omega}_1}{\lambda}  \left[ \lambda_1 \left( {\bf \Pi}_{i-\halfone,j}^{+} + {\bf \Pi}_{i+\halfone,j}^{-} \right) + \lambda_2 \left( {\bf \Pi}_{i,j-\halfone}^{+} + {\bf \Pi}_{i,j+\halfone}^{-} \right) \right]}{1-2\widehat{\omega}_1}  \in G, \mbox{ if } k \ge 3. 
    		\end{cases}
    		$}
    \end{equation}
	where 
	$\widehat{\omega}_1 = \frac{1}{ L(L-1) }$ is the first weight of the $L$--point Guass--Lobatto quadrature with $L=\lceil \frac{k+2}{2} \rceil$ for a $k$th order finite volume scheme, and  
	\begin{align} \nonumber
		& \lambda_1 = \alphaLF_1 \Delta t/\Delta x, \quad \lambda_2 = \alphaLF_2 \Delta t/\Delta y, \quad   
		\lambda  = \lambda_1 + \lambda_2, 
		\\
		\label{FV:InterfacePi}
		& {\bf \Pi}_{i\pm\halfone,j}^{\mp} := \sum_{\mu=1}^{Q} \omega_{\mu} \mathbf{U}_{i\pm\halfone,j}^{\mp,\mu}, \qquad
		{\bf \Pi}_{i,j\pm\halfone}^{\mp} := \sum_{\mu=1}^{Q} \omega_{\mu} \mathbf{U}_{i,j\pm\halfone}^{\mu,\mp}, 
	\end{align}
	The condition \eqref{eq:DDF} is a DDF condition for the magnetic field, because $\nabla_h \cdot \mathbf{B}_{ij}$ can be regarded as an approximation to the weak divergence  
	$
		 \frac{1}{\dx\dy}\int_{\partial I_{ij}} \mathbf{B} \cdot \mathbf{n}_{\partial I_{ij}} \dd s = \frac{1}{\dx\dy}\iint_{ I_{ij}} \nabla \cdot \mathbf{B} \dd x \dd y 
	$ 
	on the cell $I_{ij}$, where $\mathbf{n}_{\partial I_{ij}}$ denotes the unit outward normal vector of $\partial I_{ij}$. 
	As it will be shown in \Cref{thm:main}, 
	both conditions \eqref{eq:DDF} and \eqref{LF:cond1} play key roles in ensuring the PP property. 
	However, the values $\mathbf{U}_{i+\halfone,j}^{\pm,\mu}$ and $\mathbf{U}_{i,j+\halfone}^{\mu,\pm}$ obtained from a conventional high-order reconstruction generally do not satisfy 
	    \eqref{eq:DDF} and \eqref{LF:cond1}. To address this, we will propose an efficient DDF projection technique (see \cref{DivF:Sec}) and a simple PP limiter (see \cref{PCP:limiter}) to enforce the conditions \eqref{eq:DDF} and \eqref{LF:cond1}, respectively; see \Cref{fig:DDF-PP} for an illustration.

%\begin{figure}[htbp]
%	\centering
%	\includegraphics[width=1\textwidth]{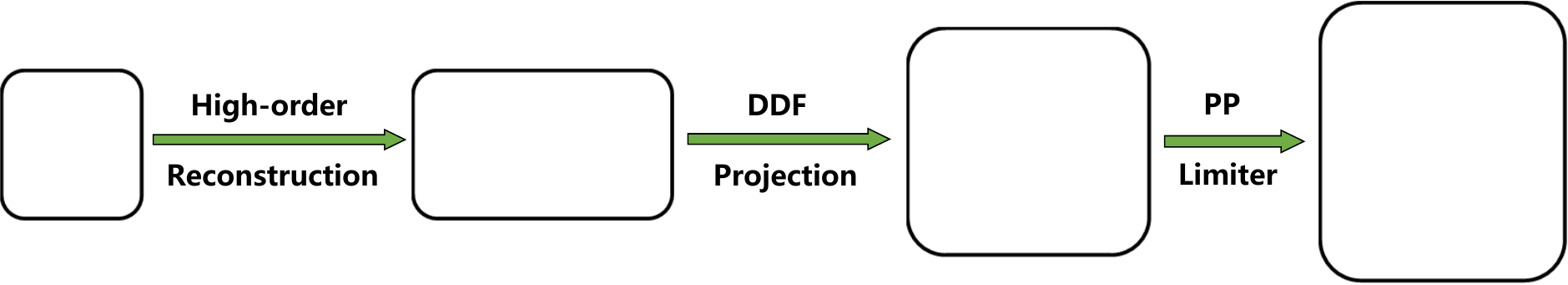}
%	\put(-69,62){\small \color{blue} \mbox{$\mathbf{U}_{i+\halfone,j}^{\pm,\mu}, \mathbf{U}_{i,j+\halfone}^{\mu,\pm}$}}
%	\put(-64,36){\small \color{blue} \mbox{$\nabla_h \cdot \mathbf{B}_{ij}=0$}}
%	\put(-63,16){\small \color{blue} \mbox{satisfy \eqref{LF:cond1}}}
%	\put(-197,53){\small \color{blue} \mbox{$\widetilde{\mathbf{U}}_{i+\halfone,j}^{\pm,\mu}, \widetilde{\mathbf{U}}_{i,j+\halfone}^{\mu,\pm}$}}
%	\put(-194,26){\small \color{blue} \mbox{$\nabla_h \cdot \widetilde{\mathbf{B}}_{ij}=0$}}
%	\put(-351,40){\small \color{blue} \mbox{$\widehat{\mathbf{U}}_{i+\halfone,j}^{\pm,\mu}, \widehat{\mathbf{U}}_{i,j+\halfone}^{\mu,\pm}$}}
%	\put(-480,40){\small \color{blue} \mbox{$\left\{ \overline {\bf U}_{ij} \right\}$}}
%%	\put(-137,119){\rotatebox{-10}{\color{red}$\swarrow$}}
%%	\put(-124,124){\footnotesize \mbox{\color{red}$g({\bf u})=0$}}
%%	\put(-153,117){\rotatebox{-10}{$\nearrow$}}
%%	\put(-188,106){\footnotesize \mbox{$\varphi({\bf u};-2)=0$}}
%%	\put(-2,117){\rotatebox{10}{$\nwarrow$}}
%%	\put(-2,107){\footnotesize \mbox{$\varphi({\bf u};2)=0$}}
%%	\put(-141,73){\rotatebox{0}{$\nearrow$}}
%%	\put(-178,63){\footnotesize \mbox{$\varphi({\bf u};-1)=0$}}
%%	\put(-14,73){\rotatebox{0}{$\nwarrow$}}
%%	\put(-14,62){\footnotesize \mbox{$\varphi({\bf u};1)=0$}}
%%	\put(-21,11){\footnotesize \mbox{$\varphi({\bf u};0)=0$}} 
%\end{figure}	

\begin{figure}[htbp]
	\centering
	\includegraphics[width=1\textwidth]{}
	\put(-57,48){\scriptsize \color{blue} \mbox{$\mathbf{U}_{i+\halfone,j}^{\pm,\mu}, \mathbf{U}_{i,j+\halfone}^{\mu,\pm}$}}
	\put(-55,30){\small \color{blue} \mbox{$\nabla_h \cdot \mathbf{B}_{ij}=0$}}
	\put(-53,12){\small \color{blue} \mbox{satisfy \eqref{LF:cond1}}}
	\put(-155,43){\scriptsize \color{blue} \mbox{$\widetilde{\mathbf{U}}_{i+\halfone,j}^{\pm,\mu}, \widetilde{\mathbf{U}}_{i,j+\halfone}^{\mu,\pm}$}}
	\put(-153,20){\small \color{blue} \mbox{$\nabla_h \cdot \widetilde{\mathbf{B}}_{ij}=0$}}
	\put(-272,32){\small \color{blue} \mbox{$\widehat{\mathbf{U}}_{i+\halfone,j}^{\pm,\mu}, \widehat{\mathbf{U}}_{i,j+\halfone}^{\mu,\pm}$}}
	\put(-367,32){\small \color{blue} \mbox{$\left\{ \overline {\bf U}_{ij} \right\}$}}
	%	\put(-137,119){\rotatebox{-10}{\color{red}$\swarrow$}}
	%	\put(-124,124){\footnotesize \mbox{\color{red}$g({\bf u})=0$}}
	%	\put(-153,117){\rotatebox{-10}{$\nearrow$}}
	%	\put(-188,106){\footnotesize \mbox{$\varphi({\bf u};-2)=0$}}
	%	\put(-2,117){\rotatebox{10}{$\nwarrow$}}
	%	\put(-2,107){\footnotesize \mbox{$\varphi({\bf u};2)=0$}}
	%	\put(-141,73){\rotatebox{0}{$\nearrow$}}
	%	\put(-178,63){\footnotesize \mbox{$\varphi({\bf u};-1)=0$}}
	%	\put(-14,73){\rotatebox{0}{$\nwarrow$}}
	%	\put(-14,62){\footnotesize \mbox{$\varphi({\bf u};1)=0$}}
	%	\put(-21,11){\footnotesize \mbox{$\varphi({\bf u};0)=0$}} 
	\caption{\small Outline of the DDF projection and PP limiter for conditions \eqref{eq:DDF} and \eqref{LF:cond1}.}
	\label{fig:DDF-PP}
\end{figure}

%\end{itemize}

Besides the satisfaction of conditions \eqref{eq:DDF} and \eqref{LF:cond1}, 
a suitable discretization of the source term, which should exactly eliminate the impact of divergence error on the PP property \cite{WuShu2018,WuShu2019}, is also very crucial for guaranteeing the PP property. Seeking the qualified source approximation is highly nontrivial. 
Based on our numerical analysis presented in \cref{PCP:Sec}, we find out the following source term approximation
\begin{equation}\label{Dis:LFSourT2}
	\resizebox{0.92\hsize}{!}{$
		\begin{aligned}
			{\mathbf{S}}_{ij}=&-\frac{1}{\dx}\sum_{\mu=1}^{Q} \omega_{\mu} 
			\left(\frac{1}{2} \jump{B_{1}}_{i+\halfone,j}^{\mu} \mathbf{S}\left(\average{\mathbf{U}}_{i+\halfone,j}^{\mu} \right) + \frac{1}{2} \jump{B_{1}}_{i-\halfone,j}^{\mu} \mathbf{S}\left(\average{\mathbf{U}}_{i-\halfone,j}^{\mu} \right)
			\right) \\
			& -\frac{1}{\dy}\sum_{\mu=1}^{Q} \omega_{\mu} 
			\left(\frac{1}{2} \jump{B_{2}}_{i,j+\halfone}^{\mu} \mathbf{S}\left(\average{\mathbf{U}}_{i,j+\halfone}^{\mu} \right) + \frac{1}{2} \jump{B_{2}}_{i,j-\halfone}^{\mu} \mathbf{S} \left(\average{\mathbf{U}}_{i,j-\halfone}^{\mu} \right)
			\right),
		\end{aligned}$
}
\end{equation}
where $\jump \cdot$ and $\average{\cdot}$ respectively denote the jump and average of the limiting values at a cell interface: 
\begin{equation*}
	\begin{aligned}
		&\jump{B_{1}}_{i+\halfone,j}^{\mu} := (B_{1})_{i+\halfone,j}^{+,\mu} - (B_{1})_{i+\halfone,j}^{-,\mu}, &
		\qquad
		&\jump{B_{2}}_{i,j+\halfone}^{\mu}:= (B_{2})_{i,j+\halfone}^{\mu,+} - (B_{2})_{i,j+\halfone}^{\mu,-}, \\
		&\average{\mathbf{U}}_{i+\halfone,j}^{\mu} := \frac{1}{2} \left( \mathbf{U}_{i+\halfone,j}^{+,\mu} + \mathbf{U}_{i+\halfone,j}^{-,\mu} \right), &
		\qquad
		&\average{\mathbf{U}}_{i,j+\halfone}^{\mu} := \frac{1}{2} \left( \mathbf{U}_{i,j+\halfone}^{\mu,+} + \mathbf{U}_{i,j+\halfone}^{\mu,-} \right). 
	\end{aligned}
\end{equation*}
Notice that the source term approximation \eqref{Dis:LFSourT2} is novel and different from those proposed in \cite{WuShu2018,WuShu2019}. As a result, the theoretical CFL condition \eqref{LF:CFLCond} for PP property milder than those derived in \cite{WuShu2018,WuShu2019}.   

Define 
\begin{equation}\label{eq:defL}
	\mathcal{L}_{ij}\left(\overline{\mathbf{U}}\right):= -\frac{1}{\dx} \left( \widehat{\mathbf{F}}_{1,i+\halfone,j}-\widehat{\mathbf{F}}_{1,i-\halfone,j} \right)  
-\frac{1}{\dy} \left( \widehat{\mathbf{F}}_{2,i,j+\halfone}-\widehat{\mathbf{F}}_{2,i,j-\halfone} \right) 
+{\mathbf{S}}_{ij},
\end{equation}
then the scheme \eqref{FV:DiscreteForm} can be written as 
%\begin{equation}\label{eq:dUdt=LU}
	$\frac{\dd \overline{\mathbf{U}}_{ij}}{\dd t} = \mathcal{L}_{ij}\left(\overline{\mathbf{U}}\right)$, 
%\end{equation} 
which can be marched forward in time by some Runge--Kutta or multi-step methods. 
For the scheme \eqref{FV:DiscreteForm} with \eqref{Dis:LFSourT2}, we have the following main theorem on the PP property. 

\begin{theorem}\label{thm:main}
	Assume that $\overline{\mathbf{U}}_{ij}\in G$ for all $i$ and $j$. 
	If the values $\mathbf{U}_{i+\halfone,j}^{\pm,\mu}$ and $\mathbf{U}_{i,j+\halfone}^{\mu,\pm}$ satisfy conditions \eqref{eq:DDF} and \eqref{LF:cond1} for all $i$ and $j$, 
	then the scheme \eqref{FV:DiscreteForm} with \eqref{Dis:LFSourT2} is PP, namely, 
	 %and the numerical solution $\mathbf{U}_h$ satisfy condition  
	the updated cell averages satisfy %$\overline{\mathbf{U}}_{ij}^* \in G, $ for $\forall i,j$, 
	\begin{equation}\label{LF:Update}
		\overline{\mathbf{U}}_{ij}^{\dt} := 
		\overline{\mathbf{U}}_{ij} + \dt \mathcal{L}_{ij}(\overline{\mathbf{U}}) \in G, \qquad \forall i,j, 
	\end{equation}
	under the CFL condition
	\begin{equation}\label{LF:CFLCond}
		0<\dt \left(\frac{\alphaLF_1}{\dx} + \frac{\alphaLF_2}{\dy} \right) < \widehat{\omega}_1,
	\end{equation}
	where 
	\begin{equation}
		\alphaLF_1 \geq \hat{\alpha}_1^{\rm{LF}} +  \beta_1, \qquad 
		\alphaLF_2 \geq \hat{\alpha}_2^{\rm{LF}} +  \beta_2
	\end{equation}
	with  
	\begin{align*}
		%\begin{aligned}
			&\hat{\alpha}_1^{\rm{LF}}:=\underset{i,j,\mu}{\max} \max \left\{ \alpha_1(\mathbf{U}_{i+\halfone,j}^{-,\mu}, \mathbf{U}_{i-\halfone,j}^{+,\mu}), \alpha_1(\mathbf{U}_{i+\halfone,j}^{+,\mu}, \mathbf{U}_{i-\halfone,j}^{-,\mu}) \right\}  ,\\
			&\hat{\alpha}_2^{\rm{LF}}:=\underset{i,j,\mu}{\max} \max \left\{ \alpha_2(\mathbf{U}_{i,j+\halfone}^{\mu,-}, \mathbf{U}_{i,j-\halfone}^{\mu,+}), \alpha_2(\mathbf{U}_{i,j+\halfone}^{\mu,+}, \mathbf{U}_{i,j-\halfone}^{\mu,-}) \right\}, \\
			&\beta_1:=\underset{i,j,\mu}{\max}  \left\{ \frac{\Big| \jump{B_{1}}_{i+\halfone,j}^{\mu} \Big|}{2\sqrt{\average{\rho}_{i+\halfone,j}^{\mu} }} \right\}, \quad 
			\beta_2:=\underset{i,j,\mu}{\max}  \left\{ \frac{\Big| \jump{B_{2}}_{i,j+\halfone}^{\mu} \Big|}{2\sqrt{\average{\rho}_{i,j+\halfone}^{\mu}}} \right\}.
		%\end{aligned}
	\end{align*}
Here, $\alpha_1$ and $\alpha_2$ are defined by 
\begin{align}\nonumber %\label{def:alpha}
	\alpha_i(\mathbf{U}, \widetilde{\mathbf{U}}) = \max \left \{ |v_i|+\mathcal{C}_i, |\widetilde{v}_i|+\widetilde{\mathcal{C}}_i, \frac{\sqrt{\rho}v_i + \sqrt{\widetilde{\rho}}\widetilde{v}_i}{\sqrt{\rho} + \sqrt{\widetilde{\rho}}}+\max\{\mathcal{C}_i, \widetilde{\mathcal{C}}_i\} \right \} + \frac{| \mathbf{B}-\widetilde{\mathbf{B}} |}{\sqrt{\rho} + \sqrt{\widetilde{\rho}}},
	\\ \nonumber
	\mathcal{C}_i = \frac{1}{\sqrt{2}} \left[\mathcal{C}_s^2 + \frac{{| \mathbf{B} |}^2}{\rho} + \sqrt{\left(\frac{{  \mathcal{C}_s^2+ |\mathbf{B} |}^2}{\rho}\right)^2 - \frac{4\mathcal{C}_s^2 B_i^2}{\rho}}\right]^{\frac12},
	\qquad 
	\mathcal{C}_s = \frac{p}{\rho \sqrt{2e}}.
\end{align}
\end{theorem}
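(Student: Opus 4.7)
The plan is to employ the geometric quasilinearization (GQL) approach to convert the nonlinear positivity constraint $\mathcal{E}(\mathbf{U})>0$ into an infinite family of linear inequalities. Since $\mathcal{E}$ is concave, one has $\mathbf{U}\in G$ if and only if $\rho>0$ and
$$\mathcal{E}_*(\mathbf{U};\mathbf{v}^*,\mathbf{B}^*):=E-\mathbf{m}\cdot\mathbf{v}^*+\tfrac12\rho|\mathbf{v}^*|^2-\mathbf{B}\cdot\mathbf{B}^*+\tfrac12|\mathbf{B}^*|^2>0$$
for all auxiliary $(\mathbf{v}^*,\mathbf{B}^*)\in\mathbb{R}^3\times\mathbb{R}^3$. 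Fixing one such pair, the linearity of $\mathcal{E}_*$ in $\mathbf{U}$ means I only need to express $\overline{\mathbf{U}}_{ij}^{\dt}$ as a convex combination of states each satisfying $\rho>0$ and $\mathcal{E}_*>0$; the theorem then follows by letting $(\mathbf{v}^*,\mathbf{B}^*)$ be arbitrary.

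To build this convex decomposition, I would substitute the LF flux \eqref{eq:LFFlux}, the source approximation \eqref{Dis:LFSourT2}, and the identity \eqref{LF:cond1} into the update formula \eqref{LF:Update}, and then reorganise the resulting sums face by face to arrive at a representation of the form
$$\overline{\mathbf{U}}_{ij}^{\dt}=(1-2\widehat{\omega}_1)\,{\bf \Pi}_{ij}+\frac{\widehat{\omega}_1}{\lambda}\sum_{\mu=1}^{Q}\omega_\mu\Big(\lambda_1[\mathbf{H}_{i-\halfone,j}^{+,\mu}+\mathbf{H}_{i+\halfone,j}^{-,\mu}]+\lambda_2[\mathbf{H}_{i,j-\halfone}^{\mu,+}+\mathbf{H}_{i,j+\halfone}^{\mu,-}]\Big),$$
where each intermediate state $\mathbf{H}$ is an HLL-type combination of an interface pair $(\mathbf{U}^-,\mathbf{U}^+)$ together with its LF numerical flux and the paired source contribution $-\tfrac12\jump{B_n}\mathbf{S}(\average{\mathbf{U}})$ attached to that face. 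The CFL bound \eqref{LF:CFLCond} is precisely what renders all the weights in this combination nonnegative. The delicate step in this reorganisation is to use the DDF identity \eqref{eq:DDF} to freely insert or subtract a multiple of $\nabla_h\cdot\mathbf{B}_{ij}$, which permits pairing each source term with the correct face and thereby localising the global divergence error to the cell.

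With this decomposition in hand, positivity is checked piecewise. The remainder ${\bf \Pi}_{ij}$ lies in $G$ by hypothesis \eqref{LF:cond1} for $k\ge 3$ (and its coefficient vanishes when $k\le 2$). For the density component of each $\mathbf{H}$ the source contributes $0$, so positivity reduces to the classical scalar LF argument under $\alphaLF_\ell\ge\hat{\alpha}_\ell^{\rm LF}$. For the internal energy, applying $\mathcal{E}_*(\cdot;\mathbf{v}^*,\mathbf{B}^*)$ to $\mathbf{H}$ splits into three groups: (i) a positive multiple of $\mathcal{E}_*(\mathbf{U}^\pm;\mathbf{v}^*,\mathbf{B}^*)$, positive by \eqref{LF:cond1} and GQL; (ii) flux-and-viscosity residues controlled by the MHD fast wave speed through $\alphaLF_\ell\ge\alpha_\ell$; and (iii) cross terms proportional to $\jump{B_n}$ coming from the paired source term, which after GQL linearisation combine with $\jump{B_n}\mathbf{B}^*$-type flux residues. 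Completing the square in the density-momentum variables at the averaged state bounds group (iii) from below by a quantity of the form $-|\jump{B_n}|^2/(4\average{\rho})$, which is exactly what the extra viscosity $\beta_\ell$ is designed to absorb.

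The main obstacle will be the algebra in group (iii): arranging the GQL-linearised source so that the Galilean-noninvariant contributions $\rho\mathbf{v}\cdot\mathbf{v}^*$, $\mathbf{B}\cdot\mathbf{B}^*$, and $\mathbf{v}\cdot\mathbf{B}$ recombine into a single quadratic form in $(\mathbf{U}^-,\mathbf{U}^+)$ that is bounded by the $\beta_\ell$-term, without losing the per-cell cancellation provided by \eqref{eq:DDF}. It is precisely this calculation that dictates the symmetric form of the source approximation \eqref{Dis:LFSourT2} and the explicit $\beta_\ell$ in the statement, and makes the resulting CFL bound \eqref{LF:CFLCond} milder than those of \cite{WuShu2018,WuShu2019}.
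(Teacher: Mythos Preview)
Your proposal correctly identifies the GQL framework and the two workhorse estimates (the flux inequality of \Cref{Lemma:LF} and the source inequality of \Cref{Lemma:SourT}), but the organizational strategy differs from the paper's in a way that matters.

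The paper does \emph{not} build a per-face convex decomposition into HLL-type states $\mathbf{H}$ and check each one lies in $G$. Instead it keeps $\overline{\mathbf{U}}_{ij}^{\dt}$ intact, rewrites it as in \eqref{LF:UpdateRe}, and estimates the linear functionals $\overline{\mathbf{U}}_{ij}^{\dt}\cdot\mathbf{n}_1$ and $\overline{\mathbf{U}}_{ij}^{\dt}\cdot\mathbf{n}^*+\tfrac{|\mathbf{B}^*|^2}{2}$ directly. The flux differences are paired \emph{across the cell}, i.e.\ $\mathbf{F}_1(\mathbf{U}_{i+\halfone}^{-,\mu})-\mathbf{F}_1(\mathbf{U}_{i-\halfone}^{+,\mu})$ and $\mathbf{F}_1(\mathbf{U}_{i+\halfone}^{+,\mu})-\mathbf{F}_1(\mathbf{U}_{i-\halfone}^{-,\mu})$, not across a single interface. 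Applying \eqref{LF:pf-eq2} to these cell-spanning pairs produces a residual $-\dt(\nabla_h\cdot\average{\mathbf{B}}_{ij})(\mathbf{v}^*\cdot\mathbf{B}^*)$; applying \Cref{Lemma:SourT} to the source yields $+\tfrac{\dt}{2}\widehat{\jump{\mathbf{B}}}_{ij}(\mathbf{v}^*\cdot\mathbf{B}^*)$; and the algebraic identity $\tfrac12\widehat{\jump{\mathbf{B}}}_{ij}-\nabla_h\cdot\average{\mathbf{B}}_{ij}=-\nabla_h\cdot\mathbf{B}_{ij}$ reduces the leftover to $-\dt(\nabla_h\cdot\mathbf{B}_{ij})(\mathbf{v}^*\cdot\mathbf{B}^*)$, which vanishes by \eqref{eq:DDF}. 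This is precisely the cell-level cancellation.

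Your per-face plan runs into trouble at the step ``positivity is checked piecewise''. If each $\mathbf{H}$ is built from a single interface pair $(\mathbf{U}^-,\mathbf{U}^+)$, the flux residue from \Cref{Lemma:LF} at that face is proportional to $\jump{B_n}(\mathbf{v}^*\cdot\mathbf{B}^*)$, and the source residue from \Cref{Lemma:SourT} is also proportional to $\jump{B_n}(\mathbf{v}^*\cdot\mathbf{B}^*)$---but with a different constant of proportionality determined by the scheme, so they do not cancel face by face. The DDF identity \eqref{eq:DDF} is a statement about the \emph{interior} traces summed over all four edges; inserting a multiple of it (which is zero) cannot repair positivity of an individual face state. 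Consequently the indefinite term $(\mathbf{v}^*\cdot\mathbf{B}^*)$ survives in each $\mathbf{H}$, and you cannot conclude $\mathbf{H}\in G$ for arbitrary $(\mathbf{v}^*,\mathbf{B}^*)$. The paper in fact remarks that the DDF coupling ``invalidates the classic convex decomposition technique''; the route around it is the global estimate with cell-spanning pairings, not a refined per-face splitting.
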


The proof of \Cref{thm:main} is very technical and nontrivial, and it will be presented in  \cref{PCP:Sec}. 
We emphasize that the three key ingredients (namely, the DDF projection, the PP limiter, and the approximate source term ${\bf S}_{ij}$ in \eqref{Dis:LFSourT2}) are essential in ensuring the PP property \eqref{LF:Update}. Removing any of them in our method may lead to the loss of the PP property. 

\begin{remark}
The conclusion \eqref{LF:Update} in \Cref{thm:main} demonstrates that the scheme \eqref{FV:DiscreteForm} with \eqref{Dis:LFSourT2} is PP, if the first-order forward Euler method is used for time discretization. 
To achieve high-order accuracy in time, one can use the  high-order accurate strong-stability-preserving (SSP) methods \cite{Gottlieb2009}. Because an SSP method is a convex combination of the Euler forward, the PP property is still valid thanks to the convexity of $G$. For example, if we adopt the SSP third-order Runge--Kutta method, then we obtain a fully discrete, high-order accurate, DDFPP finite volume method:
	\begin{equation}\label{RKTime}
	\begin{aligned}
		\overline{\mathbf{U}}^{(1)}_{ij} &= \overline{\mathbf{U}}^{n}_{ij} + \dt \mathcal{L}_{ij}(\overline{\mathbf{U}}^{n}), \\
		\overline{\mathbf{U}}^{(2)}_{ij} &= \frac{3}{4}\overline{\mathbf{U}}^{n}_{ij} + \frac{1}{4} \left(\overline{\mathbf{U}}^{(1)}_{ij} + \dt \mathcal{L}_{ij}(\overline{\mathbf{U}}^{(1)}) \right), \\
		\overline{\mathbf{U}}^{n+1}_{ij} &= \frac{1}{3}\overline{\mathbf{U}}^{n}_{ij} + \frac{2}{3} \left(\overline{\mathbf{U}}^{(2)}_{ij} + \dt \mathcal{L}_{ij}(\overline{\mathbf{U}}^{(2)}) \right),
	\end{aligned}
\end{equation}
where $\dt$ is the time step-size. 
\end{remark}

%One of the key ingredients of the finite volume \eqref{FV:DiscreteForm} is to obtain $\mathbf{U}_{i+\halfone,j}^{\pm,\mu}$ and $\mathbf{U}_{i,j+\halfone}^{\mu,\pm}$ with $1\geq \mu  \geq Q$ in \eqref{FV:Flux} from the cell averages $\{\overline{\mathbf{U}}_{ij}\}$. 
%We take $k=2$ and $k=5$ as examples to illustrate the reconstruction procedure in the next \cref{Rec:Sec}.

%As shown in \Cref{fig:DDF-PP}
\subsection{High-order reconstruction}\label{Rec:Sec}
Reconstructing the point values  $\widehat{\mathbf{U}}_{i+\halfone,j}^{\pm,\mu}$ and $\widehat{\mathbf{U}}_{i,j+\halfone}^{\mu,\pm}$ is a 
 key step in the high-order finite volume method \eqref{FV:DiscreteForm}; see \Cref{fig:DDF-PP}.  
As two typical examples, a second-order accurate linear reconstruction and a fifth-order accurate WENO reconstruction are presented to illustrate the procedure on Cartesian meshes. 
%which can be denoted mathematically by an operator $\mathcal{R}$, i.e., $\mathcal{R}_{\rm LINEAR}$ for linear reconstruction and $\mathcal{R}_{\rm WENO}$ for WENO reconstruction.
%The reconstruction procedure in 2D can be regarded as two one dimensional reconstructions using a dimension-by-dimension manner in a implement sense.
%Mathematically, the one dimensional reconstruction procedure can be denoted by an operator $\mathcal{R}$, i.e., $\mathcal{R}_{\rm linear}$ for the linear reconstruction and $\mathcal{R}_{\rm WENO}$ for the WENO reconstruction, which maps the cell averages to any approximate values we need.
%We first briefly review the linear reconstruction \cite{VanLeer1979} without local characteristic decompositions and WENO reconstruction \cite{shu1998essentially,Shu2009} with robust local characteristic decompositions for two-dimensional MHD equations \eqref{eq:ModMHD}.

\subsubsection{Second-order linear reconstruction}\label{Rec:LinearRec}
Given the cell averages $\{\overline{\mathbf{U}}_{ij}\}$,
one can reconstruct a piecewise linear function: %as approximation to the exact solution:   
\begin{equation}\label{Rec:2ndPoly}
	\widehat{\mathbf{U}}_{ij}(x,y) = \overline{\mathbf{U}}_{ij} + ( \widehat{\mathbf{U}}_x)_{ij} (x-x_i) +  ( \widehat{\mathbf{U}}_y)_{ij} (y-y_j),  \qquad (x,y) \in I_{ij}.
\end{equation}
The local slopes $(\widehat{\mathbf{U}}_x)_{ij}$ and $(\widehat{\mathbf{U}}_y)_{ij}$ can be defined via the van Albada limiter by 

\begin{equation}
	\resizebox{0.92\hsize}{!}{$
		\begin{aligned}\label{Rec:2ndSlopx}
			(\widehat{\mathbf{U}}_x)_{ij} &=\frac{\left(\Big[\frac{\overline{\mathbf{U}}_{i+1,j} - \overline{\mathbf{U}}_{ij}}{\dx}\Big]^2 + \epsilon_R\right) \circ  \frac{\overline{\mathbf{U}}_{ij} - \overline{\mathbf{U}}_{i-1,j}}{\dx} + \left(\Big[\frac{\overline{\mathbf{U}}_{ij} - \overline{\mathbf{U}}_{i-1,j}}{\dx}\Big]^2 + \epsilon_L\right) \circ \frac{\overline{\mathbf{U}}_{i+1,j} - \overline{\mathbf{U}}_{ij}}{\dx}}{\Big[\frac{\overline{\mathbf{U}}_{ij} - \overline{\mathbf{U}}_{i-1,j}}{\dx}\Big]^2 + \Big[\frac{\overline{\mathbf{U}}_{i+1,j} - \overline{\mathbf{U}}_{ij}}{\dx}\Big]^2 + \epsilon_L + \epsilon_R}, 
		\end{aligned}
		$}
\end{equation}

\begin{equation}
	\resizebox{0.92\hsize}{!}{$
		\begin{aligned}\label{Rec:2ndSlopy}
			(\widehat{\mathbf{U}}_y)_{ij} &=\frac{\left(\Big[\frac{\overline{\mathbf{U}}_{i,j+1} - \overline{\mathbf{U}}_{ij}}{\dy}\Big]^2 + \epsilon_U\right) \circ \frac{\overline{\mathbf{U}}_{ij} - \overline{\mathbf{U}}_{i,j-1}}{\dy} + \left(\Big[\frac{\overline{\mathbf{U}}_{ij} - \overline{\mathbf{U}}_{i,j-1}}{\dy}\Big]^2 + \epsilon_D\right) \circ \frac{\overline{\mathbf{U}}_{i,j+1} - \overline{\mathbf{U}}_{ij}}{\dy}}{\Big[\frac{\overline{\mathbf{U}}_{ij} - \overline{\mathbf{U}}_{i,j-1}}{\dy}\Big]^2 + \Big[\frac{\overline{\mathbf{U}}_{i,j+1} - \overline{\mathbf{U}}_{ij}}{\dy}\Big]^2 + \epsilon_D + \epsilon_U},
		\end{aligned}
		$}
\end{equation}
where $[\mathbf{W}]^2 = \left(W_1^2,\cdots,W_8^2\right)^\top$ for any vector $\mathbf{W}=\left(W_1,\cdots,W_8\right)^\top$, the symbol ``$\circ$'' represents the Hadamard product (element-wise multiplication), and 
we take $\epsilon_L=\epsilon_R=3\dx$ and $\epsilon_D=\epsilon_U=3 \dy$. To achieve a second-order accurate scheme, one can choose the midpoint quadrature rule with $Q=1$ for 
evaluating the numerical fluxes \eqref{FV:Flux} on each cell edge. The limiting values $\widehat{\mathbf{U}}_{i+\halfone,j}^{\pm,1}$ and $\widehat{\mathbf{U}}_{i,j+\halfone}^{1,\pm}$ can be evaluated by using the reconstructed piecewise linear function \eqref{Rec:2ndPoly}: 
\begin{align}\label{Rec:2ndValueX-1}
	& \widehat{\mathbf{U}}_{i+\halfone,j}^{-,1}  = \overline{\mathbf{U}}_{ij} + (\widehat{\mathbf{U}}_x)_{ij} \frac{\dx}{2}, \qquad 
	\widehat{\mathbf{U}}_{i+\halfone,j}^{+,1}  = \overline{\mathbf{U}}_{i+1,j} - (\widehat{\mathbf{U}}_x)_{i+1,j} \frac{\dx}{2},
	\\
	\label{Rec:2ndValueY-1}
	& \widehat{\mathbf{U}}_{i,j+\halfone}^{1,-} = \overline{\mathbf{U}}_{ij} + (\widehat{\mathbf{U}}_y)_{ij} \frac{\dy}{2}, \qquad 
	\widehat{\mathbf{U}}_{i,j+\halfone}^{1,+} = \overline{\mathbf{U}}_{i,j+1} - (\widehat{\mathbf{U}}_y)_{i,j+1} \frac{\dy}{2}.  
\end{align}

\subsubsection{Fifth-order WENO reconstruction}\label{Rec:WENO}
We recall the fifth-order accurate WENO reconstruction on the Cartesian meshes \cite{shu1998essentially,zhang2010}. 
%The finite volume WENO schemes first was designed in \cite{LiuOsherChan1994} for a third-order version in one space dimension. The high-order schemes \cite{HuShu1999} for 2D general triangulation were developed. Readers also see the more details in \cite{Shu2009}.
To achieve the fifth-order accuracy, one can utilize the Gauss quadrature with $Q=3$ or the  Gauss--Lobatto quadrature with $Q=4$ for evaluating the numerical fluxes \eqref{FV:Flux} on each cell edge. 
We notice that, if we use the three-point Gauss quadrature rule, negative linear weights would appear 
in the WENO reconstruction for the midpoint \cite{ShiHuShu2002}. Negative weights
must be dealt with care \cite{ShiHuShu2002}, otherwise numerical oscillations and instability may occur. To avoid such risk, we adopt the four-point Gauss-Lobatto quadrature rule in our fifth-order finite volume WENO scheme. 
The two-dimensional (2D) WENO reconstruction procedure is performed in a dimension-by-dimension fashion by the following two steps.  

%For the two-dimensional reconstruction, we can obtain the WENO reconstructed values at the points of the fourth-point Gauss-Lobatto quadrature rule, i.e., the values $\mathbf{U}_{i+\halfone,j}^{\pm,\mu}$ and $\mathbf{U}_{i,j+\halfone}^{\mu,\pm}$ with $\mu=1,\cdots,4$ in the equations \eqref{FV:Flux}, in a dimension-by-dimension fashion. In other words, the two-dimensional WENO reconstruction procedure involves two one-dimensional reconstructions. 
%Note that the linear weights are different for different Gaussian points.
%More details of the fifth-order one-dimensional WENO reconstruction procedure $\mathcal{R}_{\rm WENO}$ are displayed in Appendix \ref{Appendix:A}.
%Given the two dimensional cell averages $\{ \overline{\mathbf{U}}_{ij} \}$, the two-dimensional WENO reconstruction \cite{shu1998essentially,Shu2009} for the finite volume scheme \eqref{FV:DiscreteForm} is then performed in the following way:

\begin{enumerate}
	\item[Step 1.] Given the 2D cell averages $\{\overline{\mathbf{U}}_{i,j}\}$, we perform 
	 1D WENO reconstructions to obtain four edge averages: 
	\begin{align*}
		& \left\{\overline{\mathbf{U}}_{i,j} \right\}  ~ \to ~ \left\{\bar{\mathbf{U}}_{i+\frac12,j}^+,  \bar{\mathbf{U}}_{i+\frac12,j}^- \right \}  \quad \mbox{for fixed $j$},
		\\
		& \left\{\overline{\mathbf{U}}_{i,j} \right\}  ~ \to ~ \left\{\bar{\mathbf{U}}_{i,j+\frac12}^+,  \bar{\mathbf{U}}_{i,j+\frac12}^- \right \}  \quad \mbox{for fixed $i$}.
	\end{align*}
		\item[Step 2.] Based on the edge averages, we perform 1D WENO reconstructions to obtain the point values at the Gauss--Lobatto quadrature nodes: 
	\begin{align*}
	&\left\{\bar{\mathbf{U}}_{i+\frac12,j}^+ \right\}  ~ \to ~ 
	\left\{\widehat{\mathbf{U}}_{i+\frac12,j}^{+,\mu} \right\}_{\mu=1}^Q, 
		\quad \left\{\bar{\mathbf{U}}_{i+\frac12,j}^- \right\}  ~ \to ~ 
	\left\{\widehat{\mathbf{U}}_{i+\frac12,j}^{-,\mu} \right\}_{\mu=1}^Q  \quad \mbox{for fixed $i$},
	\\
	&\left\{\bar{\mathbf{U}}_{i,j+\frac12}^+ \right\}  
	~ \to ~  \left\{\widehat{\mathbf{U}}_{i,j+\frac12}^{\mu,+} \right\}_{\mu=1}^Q,   
	\quad 
	\left\{\bar{\mathbf{U}}_{i,j+\frac12}^- \right\}  
	~ \to ~  \left\{\widehat{\mathbf{U}}_{i,j+\frac12}^{\mu,-} \right\}_{\mu=1}^Q 
	\quad \mbox{for fixed $j$}.
\end{align*} 
\end{enumerate} 
In our implementation, we employ the WENO-Z reconstruction \cite{BORGES20083191} in conjunction with local 
characteristic decomposition to improve the performance.

\subsection{Discretely divergence-free projection technique}\label{DivF:Sec}
This subsection proposes the DDF projection, which modifies  $\{\widehat{\mathbf{B}}_{i+\halfone,j}^{\pm,\mu}, \widehat{\mathbf{B}}_{i,j+\halfone}^{\mu,\pm}\}$ to 
$\{\widetilde{\mathbf{B}}_{i+\halfone,j}^{\pm,\mu}, \widetilde{\mathbf{B}}_{i,j+\halfone}^{\mu,\pm}\}$, 
so as to enforce the DDF condition 
\begin{equation}\label{eq:DDF1}
	\nabla_h \cdot \widetilde{\mathbf{B}}_{ij} := \sum_{\mu=1}^{Q} \omega_{\mu}
	\left( \frac{ (\widetilde B_{1})_{i+\halfone,j}^{-,\mu} -   ( \widetilde B_{1})_{i-\halfone,j}^{+,\mu}}{\dx}
	+ \frac{ ( \widetilde B_{2})_{i,j+\halfone}^{\mu,-}  -  (\widetilde B_{2})_{i,j-\halfone}^{\mu,+} }{\dy} \right)=0. 
\end{equation}
The DDF projection technique is very simple, efficient, and easy to implement.

%This section presents the DF-preserving limiter for second-order linear reconstruction and fifth-order WENO reconstruction in \cref{Rec:Sec}. 
%We would like point that a locally nonzero divergence of numerical magnetic field may present and lead to spurious oscillations. The DF-preserving property is indispensable in high-order PP FV method.
%We first develop the locally DF-preserving limiter by modifying the slops of magnetic fields for the second-order linear reconstruction, and find that the limiter acts on the reconstructions at the cell interface in \cref{DivF:2nd}. 
%Based on this conclusion, we design the limiter for the fifth-order WENO reconstruction to preserve the weakly DF, and this limiter can be applied for finite volume method with arbitrary high-order.

\subsubsection{DDF projection for second-order scheme}\label{DivF:2nd}
We first consider the second-order case to gain some insight on the DDF projection. 
In this case, the reconstructed magnetic field is a linear function within each cell: 
\begin{equation}\label{Rec:2ndPolyB}
	\widehat{\mathbf{B}}_{ij}(x,y) = \overline{\mathbf{B}}_{ij} + ( \widehat{\mathbf{B}}_x)_{ij} (x-x_i) +  ( \widehat{\mathbf{B}}_y)_{ij} (y-y_j),  \qquad (x,y) \in I_{ij}.
\end{equation}
Notice that $Q=1$.  According to \eqref{Rec:2ndValueX-1} and \eqref{Rec:2ndValueY-1}, 
we have  
 \begin{align*}%\label{eq:DDF2}
 	\nabla_h \cdot \widehat{\mathbf{B}}_{ij} & :=  
 	\frac{ (\widehat B_{1})_{i+\halfone,j}^{-,1} -   ( \widehat B_{1})_{i-\halfone,j}^{+,1}}{\dx}
 	+ \frac{ ( \widehat B_{2})_{i,j+\halfone}^{1,-}  -  (\widehat B_{2})_{i,j-\halfone}^{1,+} }{\dy} 
 	\\
 	& = (( \widehat{B}_1)_x)_{ij} + (( \widehat{B}_2)_y)_{ij}   
 	= \nabla \cdot \widehat{\mathbf{B}}_{ij}(x,y), 
 \end{align*}
where the slopes  $(( \widehat{B}_1)_x)_{ij}$ and $(( \widehat{B}_2)_y)_{ij}$ are independently computed by \eqref{Rec:2ndSlopx} and \eqref{Rec:2ndSlopy}, respectively. 
Therefore, in general, the divergence $(( \widehat{B}_1)_x)_{ij} + (( \widehat{B}_2)_y)_{ij}$ is not necessarily zero.  
In order to enforce the DDF condition \eqref{eq:DDF1}, we can modify the slopes $(( \widehat{B}_1)_x)_{ij}$ and $(( \widehat{B}_2)_y)_{ij}$
into 
$(( \widetilde{B}_1)_x)_{ij}$ and $(( \widetilde{B}_2)_y)_{ij}$ such that 
\begin{equation}\label{key113}
	(( \widetilde{B}_1)_x)_{ij} + (( \widetilde{B}_2)_y)_{ij} = 0.
\end{equation}
Define 
$
S_1 := (( \widehat{B}_1)_x)_{ij} - (( \widetilde{B}_1)_x)_{ij}$ and  $S_2 := (( \widehat{B}_2)_y)_{ij} - (( \widetilde{B}_2)_y)_{ij} 
$ 
as the differences between the original and modified slopes. Then the desired DDF condition \eqref{key113} is equivalent to 
$$
S_1 + S_2 = (( \widehat{B}_1)_x)_{ij} + (( \widehat{B}_2)_y)_{ij} =  \nabla_h \cdot \widehat{\mathbf{B}}_{ij}. 
$$
To keep the original accuracy of the scheme, the differences between the original and modified slopes should be as small as possible. Hence we propose to determine $S_1$ and $S_2$ by solving the following optimization problem
 \begin{equation}\label{DivB:2ndOpt}
 	\begin{aligned}
 		&\mbox{\rm minimize} \quad S_1^2 \Delta x^2 + S_2^2  \Delta y^2 , \\
 		&\mbox{\rm subject to} \quad S_1+S_2=\nabla_h \cdot \widehat{\mathbf{B}}_{ij},
 	\end{aligned}
 \end{equation}
which leads to 
\begin{equation}\label{DivB:2ndSlopS}
	S_1 = \frac{\dy^2  \nabla_h \cdot \widehat{\mathbf{B}}_{ij} }{\dx^2 + \dy^2} , \qquad
	S_2 = \frac{\dx^2 \nabla_h \cdot \widehat{\mathbf{B}}_{ij} }{\dx^2 + \dy^2}.
\end{equation}
Therefore, the modified slopes are given by 
\begin{equation}\label{DivB:2ndMod3}
	(( \widetilde{B}_1)_x)_{ij}= (( \widehat{B}_1)_x)_{ij} - \frac{\dy^2  \nabla_h \cdot \widehat{\mathbf{B}}_{ij} }{\dx^2 + \dy^2} , \qquad   (( \widetilde{B}_2)_y)_{ij} = (( \widehat{B}_2)_y)_{ij} - \frac{\dx^2 \nabla_h \cdot \widehat{\mathbf{B}}_{ij} }{\dx^2 + \dy^2}.
\end{equation}
This implies the modified point values are given by 
\begin{equation}\label{DivB:2ndMod2}
	(\widetilde B_{1})_{i\pm \halfone,j}^{\mp,1} =  (\widehat B_{1})_{i\pm \halfone,j}^{\mp,1} \mp A_1, 
\qquad 
(\widetilde B_{2})_{i,j\pm \halfone}^{1,\mp} =  (\widehat B_{2})_{i,j\pm \halfone}^{1,\mp} \mp A_2
\end{equation}
with 
	\begin{equation}\label{DivB:ValuesMod}
	A_1= \frac{\dx}{2} S_1= \frac{ \dx \nabla_h \cdot \widehat{\mathbf{B}}_{ij}  }{2(1+(\dx/\dy)^2)}, \qquad
	A_2= \frac{\dy}{2} S_2= \frac{ \dy \nabla_h \cdot \widehat{\mathbf{B}}_{ij}  }{2(1+(\dy/\dx)^2)}.
\end{equation}
We have the following observations.  

\begin{proposition}\label{prop0}
	The quantities $A_1$ and $A_2$ defined in \eqref{DivB:ValuesMod} are the solution to the following optimization problem
	 \begin{equation}\label{DivB:2ndOpt2}
		\begin{aligned}
			&\mbox{\rm minimize} \quad A_1^2   + A_2^2, \\
			&\mbox{\rm subject to} \quad \frac{2A_1}{\dx} + \frac{2A_2}{\dy}=\nabla_h \cdot \widehat{\mathbf{B}}_{ij}.
		\end{aligned}
	\end{equation}
\end{proposition}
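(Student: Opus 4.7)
The plan is to solve the constrained quadratic optimization \eqref{DivB:2ndOpt2} directly via Lagrange multipliers and verify that the resulting minimizer coincides with the formulas \eqref{DivB:ValuesMod}. Since the objective $A_1^2+A_2^2$ is strictly convex and the constraint is a single linear equation in $(A_1,A_2)$, the problem has a unique minimizer, so it suffices to exhibit a critical point of the Lagrangian.

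Concretely, I would set $c := \nabla_h \cdot \widehat{\mathbf B}_{ij}$ and form
\begin{equation*}
\mathcal{L}(A_1,A_2,\lambda) = A_1^2 + A_2^2 - \lambda \left( \frac{2A_1}{\dx} + \frac{2A_2}{\dy} - c \right).
\end{equation*}
Setting $\partial_{A_1}\mathcal{L}=0$ and $\partial_{A_2}\mathcal{L}=0$ yields $A_1=\lambda/\dx$ and $A_2=\lambda/\dy$. Substituting these into the constraint gives $2\lambda(1/\dx^2 + 1/\dy^2) = c$, hence $\lambda = \frac{c\,\dx^2\dy^2}{2(\dx^2+\dy^2)}$. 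Back-substitution produces
\begin{equation*}
A_1 = \frac{c\,\dx\,\dy^2}{2(\dx^2+\dy^2)} = \frac{\dx\, c}{2(1+(\dx/\dy)^2)}, \qquad A_2 = \frac{c\,\dy\,\dx^2}{2(\dx^2+\dy^2)} = \frac{\dy\, c}{2(1+(\dy/\dx)^2)},
\end{equation*}
which are exactly the expressions in \eqref{DivB:ValuesMod}.

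There is no real obstacle here beyond the algebraic manipulation; the main point worth emphasizing in the write-up is the uniqueness argument (strict convexity of the quadratic objective plus feasibility of the affine constraint guarantees that the unique critical point of $\mathcal{L}$ is the global minimizer), and possibly a brief remark connecting \eqref{DivB:2ndOpt2} back to \eqref{DivB:2ndOpt} via the linear change of variables $A_1=\tfrac{\dx}{2}S_1$, $A_2=\tfrac{\dy}{2}S_2$, which shows the two optimization problems are equivalent up to rescaling of the objective by a common positive constant, reinforcing consistency with \eqref{DivB:2ndSlopS}.
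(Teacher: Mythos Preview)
Your proposal is correct. The paper's proof simply eliminates $A_2$ via the constraint and minimizes the resulting one-variable quadratic in $A_1$, whereas you use Lagrange multipliers; for this two-variable quadratic with a single linear constraint the two methods are essentially equivalent and yield the same formulas.
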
 

\begin{proof}
	Substituting the constraint $A_2 = \dy ( \frac12 \nabla_h \cdot \widehat{\mathbf{B}}_{ij} -  \frac{A_1}{\dx})$ into the objective function $A_1^2   + A_2^2$ and then solving the resulting minimization problem with respect to $A_1$, we can obtain the minimum point \eqref{DivB:ValuesMod}.  
\end{proof}

\begin{remark}
Geometrically, the solution to problem \eqref{DivB:2ndOpt2} is the Euclidean projection of $(0,0)$ to the line $\frac{2A_1}{\dx} + \frac{2A_2}{\dy}=\nabla_h \cdot \widehat{\mathbf{B}}_{ij}$ in the two-dimensional plane.    
	Let ${\mathcal P}$ denote the following mapping 
 $$\left\{ (\widehat B_{1})_{i \mp \halfone,j}^{\pm,1}, (\widehat B_{2})_{i,j\mp \halfone}^{1,\pm} \right\} ~~
 \rightarrow ~~ \left\{(\widetilde B_{1})_{i\mp \halfone,j}^{\pm,1}, (\widetilde B_{2})_{i,j\mp \halfone}^{1,\pm} \right\}.$$ 
This mapping is associated with the following matrix 
%\begin{equation}
%	{\bf P}=
%	\begin{pmatrix}
%		1-\frac{1}{2(1+(\dx/\dy)^2)} &  \frac{1}{2(1+(\dx/\dy)^2)} & \frac{-\dx/\dy}{2(1+(\dx/\dy)^2)} & \frac{\dx/\dy}{2(1+(\dx/\dy)^2)}\\
%		\frac{1}{2(1+(\dx/\dy)^2)} & 1-\frac{1}{2(1+(\dx/\dy)^2)} & \frac{\dx/\dy}{2(1+(\dx/\dy)^2)} &  \frac{-\dx/\dy}{2(1+(\dx/\dy)^2)} \\
%		\frac{-\dy/\dx}{2(1+(\dy/\dx)^2)} & \frac{\dy/\dx}{2(1+(\dy/\dx)^2)} & 1-\frac{1}{2(1+(\dy/\dx)^2)} & \frac{1}{2(1+(\dy/\dx)^2)} \\
%		\frac{\dy/\dx}{2(1+(\dy/\dx)^2)} & \frac{-\dy/\dx}{2(1+(\dy/\dx)^2)} & \frac{1}{2(1+(\dy/\dx)^2)} & 1- \frac{1}{2(1+(\dy/\dx)^2)}
%	\end{pmatrix},
%\end{equation} 
\begin{equation}
	{\bf P}=
	\begin{pmatrix}
		1-\eta &  \eta & -\mu & \mu\\
		\eta & 1-\eta & \mu &  -\mu \\
		-\mu & \mu & 1-\zeta & \zeta \\
		\mu & -\mu & \zeta & 1- \zeta
	\end{pmatrix},
\end{equation} 
where $\eta = \frac{1}{2(1+(\dx/\dy)^2)}$, $\mu = \frac{\dx}{\dy} \eta$, and $\zeta=\frac{1}{2(1+(\dy/\dx)^2)}$. 
We observe that
$${\bf P}^2 = {\bf P}, \qquad {\mathcal P}^2 = {\mathcal P},$$
thus ${\mathcal P}$ is a projection operator. 
Hence we call the modification \eqref{DivB:2ndMod2} ``{\em DDF projection}''. 
\end{remark}

\begin{proposition}\label{prop1}
	For the second-order reconstruction \eqref{Rec:2ndPolyB}, the DDF modification of the slopes \eqref{DivB:2ndMod3} is equivalent to the DDF projection of the point values \eqref{DivB:2ndMod2}.
\end{proposition}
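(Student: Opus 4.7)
The proof is essentially a direct verification, consisting of substituting the modified slopes \eqref{DivB:2ndMod3} into the reconstruction formulas \eqref{Rec:2ndValueX-1}--\eqref{Rec:2ndValueY-1} and checking that the resulting interface values coincide exactly with those produced by the point-value modification \eqref{DivB:2ndMod2}.

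My plan is as follows. First, I would take the $x$-direction reconstruction formula \eqref{Rec:2ndValueX-1}, replace the slope $((\widehat{B}_1)_x)_{ij}$ with the modified slope $((\widetilde{B}_1)_x)_{ij} = ((\widehat{B}_1)_x)_{ij} - S_1$, and compute
\begin{equation*}
(\widetilde{B}_1)_{i+\halfone,j}^{-,1} = \overline{B}_{1,ij} + ((\widetilde{B}_1)_x)_{ij}\tfrac{\dx}{2} = (\widehat{B}_1)_{i+\halfone,j}^{-,1} - S_1 \tfrac{\dx}{2} = (\widehat{B}_1)_{i+\halfone,j}^{-,1} - A_1,
\end{equation*}
where the last equality uses the definition $A_1 = \frac{\dx}{2}S_1$ from \eqref{DivB:ValuesMod}. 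Analogously, applied to the right face of the neighboring cell $I_{i-1,j}$ (whose slope is likewise modified by $-S_1$ in that cell), one obtains
\begin{equation*}
(\widetilde{B}_1)_{i-\halfone,j}^{+,1} = \overline{B}_{1,i-1,j} - ((\widetilde{B}_1)_x)_{i-1,j}\tfrac{\dx}{2} = (\widehat{B}_1)_{i-\halfone,j}^{+,1} + A_1,
\end{equation*}
which matches the $\mp A_1$ rule in \eqref{DivB:2ndMod2}.

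Second, I would repeat the identical calculation in the $y$-direction using \eqref{Rec:2ndValueY-1} with the modified slope $((\widetilde{B}_2)_y)_{ij} = ((\widehat{B}_2)_y)_{ij} - S_2$ and the relation $A_2 = \frac{\dy}{2}S_2$, obtaining $(\widetilde{B}_2)_{i,j\pm\halfone}^{1,\mp} = (\widehat{B}_2)_{i,j\pm\halfone}^{1,\mp} \mp A_2$, which is exactly the point-value modification in \eqref{DivB:2ndMod2}. This establishes one direction of the equivalence.

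For the converse direction, I would observe that since the reconstruction within each cell is linear and $Q=1$, the slope is uniquely determined by any two distinct interface point values along the corresponding axis: $((\widetilde{B}_1)_x)_{ij} = \big((\widetilde{B}_1)_{i+\halfone,j}^{-,1} - (\widetilde{B}_1)_{i-\halfone,j}^{+,1}\big)/\dx$ and analogously for $((\widetilde{B}_2)_y)_{ij}$. Substituting \eqref{DivB:2ndMod2} into this identity and simplifying recovers precisely \eqref{DivB:2ndMod3}. The proof contains no genuine obstacle since both modifications are expressible as linear, rank-one corrections driven by the common scalar $\nabla_h \cdot \widehat{\mathbf{B}}_{ij}$; the only care needed is to track signs correctly at the $\pm$ interfaces and to note that both \eqref{Rec:2ndValueX-1} and \eqref{Rec:2ndValueY-1} are used consistently on both sides of each interface so that the slope modification inside each cell induces matching $\mp A_\ell$ corrections on its two opposite faces.
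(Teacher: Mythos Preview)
Your overall approach is the same as the paper's---a direct substitution to show both directions of the equivalence---and the converse direction is handled correctly. However, there is an indexing error in the forward direction that you should fix.

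You write that $(\widetilde{B}_1)_{i-\halfone,j}^{+,1}$ is obtained by ``applying to the right face of the neighboring cell $I_{i-1,j}$'' and compute it as $\overline{B}_{1,i-1,j} - ((\widetilde{B}_1)_x)_{i-1,j}\tfrac{\dx}{2}$. This is incorrect on two counts. First, by the convention of \eqref{Rec:2ndValueX-1}, the superscript $+$ at interface $x_{i-\halfone}$ denotes the limit from the right, which is computed \emph{from cell $I_{ij}$}, not from $I_{i-1,j}$; explicitly, $(\widehat{B}_1)_{i-\halfone,j}^{+,1} = \overline{B}_{1,ij} - ((\widehat{B}_1)_x)_{ij}\tfrac{\dx}{2}$. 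Second, and more seriously, $S_1$ (and hence $A_1$) is cell-specific: it depends on $\nabla_h\cdot\widehat{\mathbf B}_{ij}$, so the slope in cell $I_{i-1,j}$ is \emph{not} modified by the same $S_1$ but by a different quantity $S_1^{(i-1,j)}$ proportional to $\nabla_h\cdot\widehat{\mathbf B}_{i-1,j}$. If your formula were followed literally, the conclusion would fail.

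The correct computation stays entirely within cell $I_{ij}$: both $(\widetilde{B}_1)_{i+\halfone,j}^{-,1}$ and $(\widetilde{B}_1)_{i-\halfone,j}^{+,1}$ are the two face values of the \emph{same} modified linear function on $I_{ij}$, so
\[
(\widetilde{B}_1)_{i-\halfone,j}^{+,1} = \overline{B}_{1,ij} - ((\widetilde{B}_1)_x)_{ij}\tfrac{\dx}{2}
= \overline{B}_{1,ij} - \big[((\widehat{B}_1)_x)_{ij}-S_1\big]\tfrac{\dx}{2}
= (\widehat{B}_1)_{i-\halfone,j}^{+,1} + A_1,
\]
which is what you want. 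Your closing sentence (``the slope modification inside each cell induces matching $\mp A_\ell$ corrections on its two opposite faces'') is in fact the right picture; just make the displayed computation consistent with it.
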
 

\begin{proof}
Clearly, the DDF modification of the slopes \eqref{DivB:2ndMod3} implies that the associated point values are  modified into \eqref{DivB:2ndMod2} with \eqref{DivB:ValuesMod}. 
Conversely, if we modify the point values as \eqref{DivB:2ndMod2} with \eqref{DivB:ValuesMod}, then we can reconstruct a linear function by using the modified point values. We can verify that the slopes of the new  linear function satisfy \eqref{DivB:2ndMod3}. 
\end{proof}

\Cref{prop1} demonstrates that we only need to modify the reconstructed point values via  \eqref{DivB:2ndMod2} at the cell interfaces, to enforce the DDF constraint \eqref{eq:DDF1}.

\subsubsection{DDF projection for higher-order schemes}\label{DivF:5th}
We now discuss the DDF projection technique to enforce the condition \eqref{eq:DDF1} for $k$th-order finite volume schemes with $k\ge 3$. 
Unlike the second-order scheme, for higher-order schemes such as the fifth-order WENO scheme, the reconstruction directly gives only the point values at the cell-interface without explicitly building any approximation polynomials. Therefore, the DDF condition  \eqref{eq:DDF1} cannot be enforced  by the DF techniques (cf.~\cite{Balsara2004,Balsara2009,xu2016divergence,liu2021new}) based on approximation polynomials. 

Motivated by the DDF projection technique \eqref{DivB:2ndMod2} in the second-order case, we propose the following DDF projection on higher-order reconstructed point values at the cell-interface: 
\begin{equation}\label{DivB:highOrderMod1}
	\begin{aligned}
			(\widetilde B_{1})_{i\pm \halfone,j}^{\mp,\mu} =  (\widehat B_{1})_{i\pm \halfone,j}^{\mp,\mu} \mp A_1, 
		\qquad 
		(\widetilde B_{2})_{i,j\pm \halfone}^{\mu,\mp} =  (\widehat B_{2})_{i,j\pm \halfone}^{\mu,\mp} \mp A_2, \qquad 1\leq \mu \leq Q, 
	\end{aligned}
\end{equation}
where $A_1$ and $A_2$ are defined by 
\begin{equation}\label{DivB:highOrderValuesMod1}
	A_1=\frac{  \dx  \nabla_h \cdot \widehat{\mathbf{B}}_{ij} }{2(1+(\dx/\dy)^2)}, \qquad
	A_2=\frac{ \dy  \nabla_h \cdot \widehat{\mathbf{B}}_{ij} }{2(1+(\dy/\dx)^2)} 
\end{equation}
with 
$$
\nabla_h \cdot \widehat{\mathbf{B}}_{ij} :=  	 \sum_{\mu=1}^{Q} \omega_{\mu}
\left( \frac{ (\widehat B_{1})_{i+\halfone,j}^{-,\mu} -   ( \widehat B_{1})_{i-\halfone,j}^{+,\mu}}{\dx}
+ \frac{ ( \widehat B_{2})_{i,j+\halfone}^{\mu,-}  -  (\widehat B_{2})_{i,j-\halfone}^{\mu,+} }{\dy} \right). 
$$

\begin{theorem}\label{prop2}
The modified point values \eqref{DivB:highOrderMod1} satisfy the DDF condition \eqref{eq:DDF1}. 
\end{theorem}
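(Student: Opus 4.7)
The plan is to verify the claim by direct substitution, since no deep ingredient is needed: the modification in \eqref{DivB:highOrderMod1} is a uniform shift of the reconstructed normal components at each Gauss--Lobatto node on every cell interface, so the whole argument reduces to checking that the constant shifts $A_1,A_2$ in \eqref{DivB:highOrderValuesMod1} have been chosen to exactly cancel $\nabla_h\cdot\widehat{\mathbf B}_{ij}$.

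First, I would plug the definitions
\[
(\widetilde B_{1})_{i+\halfone,j}^{-,\mu} = (\widehat B_{1})_{i+\halfone,j}^{-,\mu} - A_1, \qquad
(\widetilde B_{1})_{i-\halfone,j}^{+,\mu} = (\widehat B_{1})_{i-\halfone,j}^{+,\mu} + A_1,
\]
and the analogous formulas in the $y$-direction directly into the expression for $\nabla_h\cdot\widetilde{\mathbf B}_{ij}$ in \eqref{eq:DDF1}. Because the shifts $A_1,A_2$ are independent of the quadrature index $\mu$, and because the weights satisfy $\sum_{\mu=1}^Q\omega_\mu=1$, the sum collapses neatly to
\[
\nabla_h\cdot\widetilde{\mathbf B}_{ij} = \nabla_h\cdot\widehat{\mathbf B}_{ij} - \frac{2A_1}{\dx} - \frac{2A_2}{\dy}.
\]

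The second step is a short algebraic computation. Using \eqref{DivB:highOrderValuesMod1}, I would compute
\[
\frac{2A_1}{\dx} + \frac{2A_2}{\dy} = \left(\frac{1}{1+(\dx/\dy)^2} + \frac{1}{1+(\dy/\dx)^2}\right)\nabla_h\cdot\widehat{\mathbf B}_{ij}.
\]
Setting $r=\dx/\dy$, the factor in parentheses becomes $\frac{1}{1+r^2}+\frac{r^2}{1+r^2}=1$, so the shift terms cancel the original divergence exactly, and $\nabla_h\cdot\widetilde{\mathbf B}_{ij}=0$.

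There is no genuine obstacle here; the main point worth emphasizing in the write-up is simply that the $\mu$-independence of $A_1,A_2$ is what allows a single pair of scalar corrections per cell to enforce a condition that involves $Q$ quadrature nodes per interface, and that this is consistent with the optimization characterization of $(A_1,A_2)$ noted in \Cref{prop0} (which identifies these shifts as the minimum-norm correction satisfying the linear constraint $\frac{2A_1}{\dx}+\frac{2A_2}{\dy}=\nabla_h\cdot\widehat{\mathbf B}_{ij}$). Thus the proposition follows from a one-line normalization identity combined with the definition of $A_1,A_2$.
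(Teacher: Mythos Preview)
Your proposal is correct and follows essentially the same approach as the paper: direct substitution of \eqref{DivB:highOrderMod1} into \eqref{eq:DDF1}, use of $\sum_{\mu=1}^Q\omega_\mu=1$ to collapse the shift terms, and then the elementary identity $\frac{1}{1+r^2}+\frac{r^2}{1+r^2}=1$ with $r=\dx/\dy$ to conclude. Your added remarks on the $\mu$-independence of $A_1,A_2$ and the link to \Cref{prop0} are accurate but optional embellishments.
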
 

\begin{proof}
Substituting the modified values \eqref{DivB:highOrderMod1} into the formulation of $\nabla_h \cdot \widetilde{\mathbf{B}}_{ij}$  in \eqref{eq:DDF1}, we obtain 
\begin{equation*}
	\resizebox{1\hsize}{!}{$
		\begin{aligned}
			\nabla_h \cdot \widetilde{\mathbf{B}}_{ij} & = \sum_{\mu=1}^{Q} \omega_{\mu}
			\left(  \frac{ \left[ (\widehat B_{1})_{i+\halfone,j}^{-,\mu} - A_1 \right] - \left[  ( \widehat B_{1})_{i-\halfone,j}^{+,\mu} +A_1\right]  }{\dx}
			+ \frac{   \left[ ( \widehat B_{2})_{i,j+\halfone}^{\mu,-} - A_2 \right]  - \left[  (\widehat B_{2})_{i,j-\halfone}^{\mu,+} +A_2 \right] }{\dy} \right)
			\\
			& = \nabla_h \cdot \widehat{\mathbf{B}}_{ij}  - \left(\frac{2A_1}{\dx} + \frac{2A_2}{\dy} \right) \sum_{\mu=1}^{Q} \omega_{\mu} = \nabla_h \cdot \widehat{\mathbf{B}}_{ij} - \frac{  \nabla_h \cdot \widehat{\mathbf{B}}_{ij} }{1+(\dx/\dy)^2} -  \frac{  \nabla_h \cdot \widehat{\mathbf{B}}_{ij} }{1+(\dy/\dx)^2} =  0.  
		\end{aligned}
		$}
\end{equation*}
The proof is completed. 
\end{proof}

Now, we raise a natural and important question: Does the DDF projection  
\eqref{DivB:highOrderMod1} keep the original high-order accuracy of the reconstructed point values? 
This question is answered by the following theorem. 

\begin{theorem}
Assume that the exact magnetic field, denoted by ${\bf B}=(B_1,B_2,B_3)$, is a $C^k$ function. Assume that the reconstructed point values are $k$th-order approximations to the exact point values, {i.e.}, we have 
\begin{equation}\label{key433}
	(\widehat B_{1})_{i\pm \halfone,j}^{\mp,\mu} - B_1(x_{i\pm \halfone}, y_j^\mu) = {\mathcal O} (h^k), \qquad 
(\widehat B_{2})_{i,j\pm \halfone}^{\mu,\mp} - B_2( x_i^\mu, y_{j\pm \halfone} ) = {\mathcal O} (h^k),
\end{equation}
where $h=\max \{\Delta x, \Delta y\}$. Then the DDF modified point values are also $k$th-order approximations to the exact point values, namely, we have 
 $$
 (\widetilde B_{1})_{i\pm \halfone,j}^{\mp,\mu} - B_1(x_{i\pm \halfone}, y_j^\mu) = {\mathcal O} (h^k), \qquad 
 (\widetilde B_{2})_{i,j\pm \halfone}^{\mu,\mp} - B_2( x_i^\mu, y_{j\pm \halfone} ) = {\mathcal O} (h^k),
 $$
 This means the DDF projection \eqref{DivB:highOrderMod1} maintains the original high-order accuracy of the reconstruction.  
\end{theorem}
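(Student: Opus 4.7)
The plan is to control the deviations $A_1$ and $A_2$ introduced by the DDF projection. By \eqref{DivB:highOrderMod1}, the modified and original point values differ by exactly $\pm A_1$ (for $B_1$) or $\pm A_2$ (for $B_2$), so the theorem reduces to showing $A_1 = \mathcal{O}(h^k)$ and $A_2 = \mathcal{O}(h^k)$. Since the geometric prefactors $\Delta x /\bigl(2(1+(\Delta x/\Delta y)^2)\bigr)$ and $\Delta y/\bigl(2(1+(\Delta y/\Delta x)^2)\bigr)$ in \eqref{DivB:highOrderValuesMod1} are both $\mathcal{O}(h)$ (for meshes with bounded aspect ratio), it suffices to establish that $\nabla_h \cdot \widehat{\mathbf{B}}_{ij} = \mathcal{O}(h^{k-1})$.

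To this end, I would split $\nabla_h \cdot \widehat{\mathbf{B}}_{ij} = \nabla_h \cdot \mathbf{B}_{ij}^{\rm exact} + \mathcal{E}_{ij}$, where $\nabla_h \cdot \mathbf{B}_{ij}^{\rm exact}$ denotes the same linear combination applied to the exact point values $B_1(x_{i\pm\halfone}, y_j^\mu)$ and $B_2(x_i^\mu, y_{j\pm\halfone})$, and $\mathcal{E}_{ij}$ collects the reconstruction errors. By assumption \eqref{key433}, each reconstruction error is $\mathcal{O}(h^k)$; dividing by $\Delta x$ or $\Delta y$ and summing with weights $\omega_\mu$ that total to one gives $\mathcal{E}_{ij} = \mathcal{O}(h^{k-1})$. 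For the consistent part $\nabla_h \cdot \mathbf{B}_{ij}^{\rm exact}$, I would invoke the accuracy of the $Q$-point Gauss or Gauss--Lobatto quadrature, which is at least $k$th-order, to obtain
\begin{equation*}
\Delta y \sum_{\mu=1}^Q \omega_\mu B_1(x_{i\pm\halfone}, y_j^\mu) = \int_{y_{j-\halfone}}^{y_{j+\halfone}} B_1(x_{i\pm\halfone},y)\,dy + \mathcal{O}(h^{k+1}),
\end{equation*}
and an analogous identity for $B_2$. Multiplying by $1/(\Delta x \Delta y)$ and assembling the four boundary contributions, the divergence theorem yields
\begin{equation*}
\nabla_h \cdot \mathbf{B}_{ij}^{\rm exact} = \frac{1}{\Delta x \Delta y}\iint_{I_{ij}} \nabla \cdot \mathbf{B}\, dx\, dy + \mathcal{O}(h^{k-1}) = \mathcal{O}(h^{k-1}),
\end{equation*}
where the last equality uses the exact divergence-free constraint $\nabla \cdot \mathbf{B} = 0$ that is intrinsic to the MHD setting. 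Combining both estimates, $\nabla_h \cdot \widehat{\mathbf{B}}_{ij} = \mathcal{O}(h^{k-1})$, whence $A_1, A_2 = \mathcal{O}(h^k)$, and the theorem follows.

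The main obstacle is the apparent loss of one order of accuracy incurred by dividing the $\mathcal{O}(h^k)$ reconstruction errors by $\Delta x$ or $\Delta y$ in the discrete divergence formula. The redemption comes from the $\mathcal{O}(h)$ geometric prefactor multiplying $\nabla_h \cdot \widehat{\mathbf{B}}_{ij}$ when forming $A_1$ and $A_2$, which restores the missing order so that the projection is an $\mathcal{O}(h^k)$ perturbation overall. A subsidiary point worth verifying in the write-up is that the consistency argument for $\nabla_h \cdot \mathbf{B}_{ij}^{\rm exact}$ genuinely leverages the exact constraint $\nabla \cdot \mathbf{B} = 0$; without this structural assumption (which is standard in the MHD framework of this paper) the projection would only be $\mathcal{O}(h)$ accurate, not $k$th-order.
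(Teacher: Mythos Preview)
Your proposal is correct and follows essentially the same route as the paper: both argue that $\nabla_h \cdot \widehat{\mathbf{B}}_{ij} = \mathcal{O}(h^{k-1})$ by combining the $\mathcal{O}(h^k)$ reconstruction errors (divided by a mesh size), the quadrature accuracy on the edges, the divergence theorem, and the exact constraint $\nabla\cdot\mathbf{B}=0$, and then recover the lost order via the $\mathcal{O}(h)$ prefactors in $A_1,A_2$. One small simplification: since $\Delta x/\bigl(2(1+(\Delta x/\Delta y)^2)\bigr)\le \Delta x/2\le h/2$, no bounded-aspect-ratio assumption is needed.
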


\begin{proof}
Based on the accuracy of the adopted $Q$-point quadrature rule, we have 
 \begin{align*}
 &	\left| \sum_{\mu=1}^Q \omega_\mu B_1 ( x_{i\pm \halfone}, y_j^\mu ) - \frac1{\Delta y} \int_{y_{j-\frac12}}^{y_{j+\halfone}} B_1 ( x_{i\pm \halfone}, y  ) {\rm d} y \right| \le {\mathcal O}(h^k),
 \\
  &	\left| \sum_{\mu=1}^Q \omega_\mu B_2 ( x_i^\mu, y_{j\pm \halfone} ) - \frac1{\Delta x} \int_{x_{i-\frac12}}^{x_{i+\halfone}} B_2 ( x, y_{j\pm \halfone}  ) {\rm d} x \right| \le {\mathcal O}(h^k).
 \end{align*}
Recall that the exact magnetic field is divergence-free. 
With the help of the divergence theorem and the triangle inequality, we obtain 
{\small \begin{align*}
	&\left| \nabla_h \cdot \widehat{\mathbf{B}}_{ij} \right| = \left| \nabla_h \cdot \widehat{\mathbf{B}}_{ij} -  \frac{1}{\dx\dy}\iint_{ I_{ij}} \nabla \cdot \mathbf{B} \dd x \dd y  \right| 
	\\
	& = \left| \nabla_h \cdot \widehat{\mathbf{B}}_{ij} -  \frac{1}{\dx\dy}\int_{\partial I_{ij}} \mathbf{B} \cdot \mathbf{n}_{\partial I_{ij}} \dd s \right| 
	\\
	& \le \left| \nabla_h \cdot \widehat{\mathbf{B}}_{ij} -  \sum_{\mu=1}^{Q} \omega_{\mu}
	\left( \frac{  B_1 ( x_{i + \halfone}, y_j^\mu ) -  B_1 ( x_{i - \halfone}, y_j^\mu )  }{\dx}
	+ \frac{  B_2 ( x_i^\mu, y_{j + \halfone} )  -   B_2 ( x_i^\mu, y_{j - \halfone} ) }{\dy} \right) \right|
	\\
	& \quad  +  \frac1{\dx} \left| \sum_{\mu=1}^Q \omega_\mu B_1 ( x_{i + \halfone}, y_j^\mu ) - \frac1{\Delta y} \int_{y_{j-\frac12}}^{y_{j+\halfone}} B_1 ( x_{i + \halfone}, y  ) {\rm d} y \right|
	\\
	& \quad +  \frac1{\dx} \left| \sum_{\mu=1}^Q \omega_\mu B_1 ( x_{i - \halfone}, y_j^\mu ) - \frac1{\Delta y} \int_{y_{j-\frac12}}^{y_{j+\halfone}} B_1 ( x_{i - \halfone}, y  ) {\rm d} y \right|
	\\
	& \quad +  \frac1{\dy} 	\left| \sum_{\mu=1}^Q \omega_\mu B_2 ( x_i^\mu, y_{j+ \halfone} ) - \frac1{\Delta x} \int_{x_{i-\frac12}}^{x_{i+\halfone}} B_2 ( x, y_{j+ \halfone}  ) {\rm d} x \right|
	\\
	& \quad + \frac1{\dy} 	\left| \sum_{\mu=1}^Q \omega_\mu B_2 ( x_i^\mu, y_{j - \halfone} ) - \frac1{\Delta x} \int_{x_{i-\frac12}}^{x_{i+\halfone}} B_2 ( x, y_{j- \halfone}  ) {\rm d} x \right|
	\\
	& \le \frac{1}{\dx} \sum_{\mu=1}^Q  \omega_\mu \left( \left| (\widehat B_{1})_{i + \halfone,j}^{-,\mu} - B_1(x_{i + \halfone}, y_j^\mu) \right| + \left| (\widehat B_{1})_{i - \halfone,j}^{+,\mu} - B_1(x_{i - \halfone}, y_j^\mu) \right|  \right) 
	\\
	& \quad + \frac{1}{\dy} \sum_{\mu=1}^Q  \omega_\mu \left( \left| (\widehat B_{2})_{i,j + \halfone}^{\mu,-} - B_2( x_i^\mu, y_{j + \halfone} ) \right| + \left|(\widehat B_{2})_{i,j- \halfone}^{\mu,+} - B_2( x_i^\mu, y_{j-\halfone} ) \right|  \right)  + {\mathcal O}(h^{k-1}) 
	\\
	& = {\mathcal O}(h^{k-1}).
\end{align*}}
It then follows from \eqref{DivB:highOrderValuesMod1} that 
\begin{equation}\label{key3123}
	\left| A_1 \right| \le \Delta x {\mathcal O}(h^{k-1}) = {\mathcal O}(h^{k}), 
	\qquad \left| A_2 \right| \le \Delta y {\mathcal O}(h^{k-1}) = {\mathcal O}(h^{k}).
\end{equation}
According to \eqref{key3123}, \eqref{key433}, and \eqref{DivB:highOrderMod1}, we have 
\begin{align*}
	&(\widetilde B_{1})_{i\pm \halfone,j}^{\mp,\mu} =  B_1(x_{i\pm \halfone}, y_j^\mu) + {\mathcal O} (h^k) \mp A_1 = B_1(x_{i\pm \halfone}, y_j^\mu) + {\mathcal O} (h^k), \\
&(\widetilde B_{2})_{i,j\pm \halfone}^{\mu,\mp} =   B_2( x_i^\mu, y_{j\pm \halfone} ) + {\mathcal O} (h^k) \mp A_2 = B_2( x_i^\mu, y_{j\pm \halfone} ) + {\mathcal O} (h^k). 
\end{align*}
The proof is completed. 
\end{proof}

The numerical results shown in \Cref{Ex:Vortex} will further confirm that the proposed DDF projection does not destroy the high-order accuracy of our schemes. 

%The limiter \eqref{DivB:highOrderMod} numerically preserve locally DF condition in the sense of weakly divergence \eqref{DivB:weaklyDiv}.
%Accordingly, we call the limiter \eqref{DivB:highOrderMod} with \eqref{DivB:highOrderValuesMod} as a weakly DF-preserving limiter for high-order accurate reconstruction in the following.
%The next Lemma illustrates that this limiter \eqref{DivB:highOrderMod} does not destroy the uniform high-order accuracy:

%\begin{lemma}
%	The weakly DF preserving limiter \eqref{DivB:highOrderMod} with \eqref{DivB:highOrderValuesMod} gives a high-order accurate limiter.
%\end{lemma}
%\begin{proof}
%	Assume that $(B_{1})_{i\pm\halfone,j}^{\mp,\mu}$ and $(B_{2})_{i,j\pm\halfone}^{\mu,\mp}$ are the $k$-order accurate reconstructed values obtained by the WENO reconstruction described in the previous \cref{Rec:WENO}, we have
%	$$
%	div_{ij}^{\rm{in}} \mathbf{B}_h =O(h^{k-1}).
%	$$
%	%	where $h=\max\{\dx,\dy\}$.
%	This implies
%	$$
%	A_1 = O(h^{k}), \qquad
%	A_2 = O(h^{k}).
%	$$
%	Thus $(B_{1})_{i\pm\halfone,j}^{\mp,\mu, \rm{mod}}$ and $(B_{2})_{i,j\pm\halfone}^{\mu,\mp, \rm{mod}}$ achieve $k$-order accuracy.
%	The proof is completed.
%\end{proof}

\begin{remark}
	The DDF projection \eqref{DivB:highOrderMod1} only involves the point values $(\widehat B_{1})_{i\pm \halfone,j}^{\mp,\mu}$ and $(\widehat B_{2})_{i,j\pm \halfone}^{\mu,\mp}$, and does not require any approximation polynomials in the reconstruction step. 
	Hence the DDF projection \eqref{DivB:highOrderMod1} is very easy to implement. 
\end{remark}

\subsection{Positivity-preserving limiter}\label{PCP:limiter}

In this subsection, we design a simple PP limiter to enforce the condition \eqref{LF:cond1}, as illustrated in \Cref{fig:DDF-PP}. 
Assume that $\overline {\bf U}_{ij }\in G$ for all $i$ and $j$. 
For each cell $I_{ij}$, 
given the point values $\{\widetilde{\mathbf{U}}_{i+\halfone,j}^{\pm,\mu}, \widetilde{\mathbf{U}}_{i,j+\halfone}^{\mu,\pm}\}$, 
the PP limiting procedure modifies them to $\{{\mathbf{U}}_{i+\halfone,j}^{\pm,\mu}, {\mathbf{U}}_{i,j+\halfone}^{\mu,\pm}\}$, which satisfy \eqref{LF:cond1}, via the following two steps.

%	Denote $\widetilde{\mathbf{U}}:= (\widetilde{\rho}, \widetilde{\mathbf{m}},\widetilde{\mathbf{B}}, \widetilde{E})^{\top}$ as the PP limited approximations at the interface for each cell. 
%According to the condition \eqref{LF:cond1}, we enforce the simplified PP limiter to modify $\left \{\mathbf{U}_{i+\halfone,j}^{\pm,\mu}, {\mathbf{U}}_{i,j+\halfone}^{\mu,\pm} \right\}$ to  
%$\left\{ \widetilde{\mathbf{U}}_{i+\halfone,j}^{\pm,\mu}, \widetilde{\mathbf{U}}_{i,j+\halfone}^{\mu,\pm} \right\}$, which is given as follows:

\noindent
{\bf Step 1}. 
 First, modify the density:
	\begin{equation}
		\dbtilde{\rho}_{i\pm\halfone,j}^{\mp,\mu}=\theta_1 \left( \widetilde \rho_{i\pm\halfone,j}^{\mp,\mu} - \bar{\rho}_{ij}\right) + \bar{\rho}_{ij}, \qquad
		\dbtilde{\rho}_{i,j\pm\halfone}^{\mu,\mp}=\theta_1 \left( \widetilde \rho_{i,j\pm\halfone}^{\mu,\mp} - \bar{\rho}_{ij}\right) + \bar{\rho}_{ij}
	\end{equation}
	with
	\begin{equation*}
		\theta_1 := \min \left\{ \left| \frac{\bar{\rho}_{ij} -\epsilon_1}{\bar{\rho}_{ij} -\rho_{\min}} \right|, 1 \right\},
		\quad
		\rho_{\rm min} =
		\begin{cases}
		\min \left\{ \widetilde \rho_{i\pm\halfone,j}^{\mp,\mu}, \widetilde \rho_{i,j\pm\halfone}^{\mu,\mp} \right\},  &\mbox{if } k=2,
		\\
		 \min \left\{ \widetilde \rho_{i\pm\halfone,j}^{\mp,\mu}, \widetilde \rho_{i,j\pm\halfone}^{\mu,\mp}, \rho_{ij}^{\star}  \right\},  &\mbox{if } k\ge 3,
		 \end{cases}
	\end{equation*}
	where we take $\epsilon_1 = \min\{10^{-13}, \bar{\rho}_{ij} \}$ to avoid the influence of the round-off errors, and $\rho_{ij}^{\star}$ is the first component of ${\bf \Pi}_{ij}$ defined by  
	\begin{align*}
		& \widetilde {\bf \Pi}_{ij} := \frac{\overline{\mathbf{U}}_{ij} - \frac{\widehat{\omega}_1}{\lambda}  \left[ \lambda_1 \left( \widetilde {\bf \Pi}_{i-\halfone,j}^{+} + \widetilde {\bf \Pi}_{i+\halfone,j}^{-} \right) + \lambda_2 \left( \widetilde {\bf \Pi}_{i,j-\halfone}^{+} + \widetilde {\bf \Pi}_{i,j+\halfone}^{-} \right) \right]}{1-2\widehat{\omega}_1},
		\\
		& \widetilde {\bf \Pi}_{i\pm\halfone,j}^{\mp} := \sum_{\mu=1}^{Q} \omega_{\mu} \widetilde{\mathbf{U}}_{i\pm\halfone,j}^{\mp,\mu}, \qquad
		\widetilde {\bf \Pi}_{i,j\pm\halfone}^{\mp} := \sum_{\mu=1}^{Q} \omega_{\mu} \widetilde{\mathbf{U}}_{i,j\pm\halfone}^{\mu,\mp}.
	\end{align*}
	This step modifies $ \{ \widetilde{\mathbf{U}}_{i+\halfone,j}^{\pm,\mu}, \widetilde{\mathbf{U}}_{i,j+\halfone}^{\mu,\pm} \}$ to  
	$\{ \dbtilde{\mathbf{U}}_{i+\halfone,j}^{\pm,\mu}, \dbtilde{\mathbf{U}}_{i,j+\halfone}^{\mu,\pm} \}$ with $\dbtilde{\mathbf{U}}:= (\dbtilde{\rho}, \widetilde{\mathbf{m}},\widetilde{\mathbf{B}}, \widetilde E)^{\top}$.

\noindent	
{\bf Step 2}. Modify the point values $\{ \dbtilde{\mathbf{U}}_{i+\halfone,j}^{\pm,\mu}, \dbtilde{\mathbf{U}}_{i,j+\halfone}^{\mu,\pm} \}$ to $\{ {\mathbf{U}}_{i+\halfone,j}^{\pm,\mu}, {\mathbf{U}}_{i,j+\halfone}^{\mu,\pm} \}$ by 
	\begin{equation}\label{eq:PPlimited}
		{\mathbf{U}}_{i\pm\halfone,j}^{\mp,\mu}=\theta_2 \left(\dbtilde{\mathbf{U}}_{i\pm\halfone,j}^{\mp,\mu} - \overline{\mathbf{U}}_{ij}\right) + \overline{\mathbf{U}}_{ij}, \qquad
		{\mathbf{U}}_{i,j\pm\halfone}^{\mu,\mp}=\theta_2 \left(\dbtilde{\mathbf{U}}_{i,j\pm\halfone}^{\mu,\mp} - \overline{\mathbf{U}}_{ij}\right) + \overline{\mathbf{U}}_{ij}
	\end{equation}
	with
	\begin{equation*}
		\resizebox{1\hsize}{!}{$
			\theta_2 = \min \left\{ \left| \frac{\mathcal{E}(\overline{\mathbf{U}}_{ij}) -\epsilon_2}{\mathcal{E}(\overline{\mathbf{U}}_{ij}) -{\mathcal{E}}_{\min}} \right|, 1 \right\},
			~
			{\mathcal{E}}_{\min} = 
			\begin{cases} 
				\min \left\{\mathcal{E}(\dbtilde{\mathbf{U}}_{i\pm\halfone,j}^{\mp,\mu}), \mathcal{E}(\dbtilde{\mathbf{U}}_{i,j\pm\halfone}^{\mu,\mp}) \right\},  &\mbox{if } k=2,
				\\
				\min \left\{\mathcal{E}(\dbtilde{\mathbf{U}}_{i\pm\halfone,j}^{\mp,\mu}), \mathcal{E}(\dbtilde{\mathbf{U}}_{i,j\pm\halfone}^{\mu,\mp}), \mathcal{E} \left( \dbtilde{\bf \Pi}_{ij} \right) \right\},  &\mbox{if } k\ge 3,
			\end{cases}
			$}
	\end{equation*}
	where $\epsilon_2 = \min\{10^{-13}, \mathcal{E}(\overline{\mathbf{U}}_{ij}) \} $, and $\dbtilde{\bf \Pi}_{ij}$ is given by 
	\begin{align*}
		&\dbtilde {\bf \Pi}_{ij} := \frac{\overline{\mathbf{U}}_{ij} - \frac{\widehat{\omega}_1}{\lambda}  \left[ \lambda_1 \left( \dbtilde {\bf \Pi}_{i-\halfone,j}^{+} + \dbtilde {\bf \Pi}_{i+\halfone,j}^{-} \right) + \lambda_2 \left( \dbtilde {\bf \Pi}_{i,j-\halfone}^{+} + \dbtilde {\bf \Pi}_{i,j+\halfone}^{-} \right) \right]}{1-2\widehat{\omega}_1},
		\\
			&	\dbtilde {\bf \Pi}_{i\pm\halfone,j}^{\mp} := \sum_{\mu=1}^{Q} \omega_{\mu} \dbtilde{\mathbf{U}}_{i\pm\halfone,j}^{\mp,\mu}, \qquad
		\dbtilde {\bf \Pi}_{i,j\pm\halfone}^{\mp} := \sum_{\mu=1}^{Q} \omega_{\mu} \dbtilde{\mathbf{U}}_{i,j\pm\halfone}^{\mu,\mp}.
	\end{align*}

The above PP limiter is motivated by the simplified maximum-principle-preserving limiter in \cite{zhang2011b}. 
As demonstrated by \Cref{Ex:Vortex}, such a PP limiter does not destroy the high-order accuracy; see also \cite{zhang2010,zhang2011b,ZHANG2017301} for some theoretical justification. 

\begin{theorem}
Assume that $\overline {\bf U}_{ij }\in G$ for all $i$ and $j$.	Then the PP limited point values $\left\{ {\mathbf{U}}_{i+\halfone,j}^{\pm,\mu}, {\mathbf{U}}_{i,j+\halfone}^{\mu,\pm} \right\}$ defined by \eqref{eq:PPlimited}  maintain the DDF condition \eqref{eq:DDF} 
and simultaneously satisfy the PP conditions \eqref{LF:cond1}. 
\end{theorem}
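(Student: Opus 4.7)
The plan is to verify each claim separately. For the DDF preservation, the key observation is that Step 1 of the limiter alters only the density component, leaving every $\widetilde{B}_\ell$ untouched, so the DDF identity proved in \Cref{prop2} carries over unchanged to the intermediate states $\dbtilde{\mathbf{U}}$. Step 2 then rescales every component (including $B_1,B_2$) by a single cell-dependent factor $\theta_2 = \theta_2^{(i,j)}$. Because all four boundary contributions appearing in $\nabla_h\cdot\mathbf{B}_{ij}$ are values associated with the same cell $I_{ij}$, they are modified by the same $\theta_2^{(i,j)}$ and the same constant $(1-\theta_2^{(i,j)})\overline{B}_{\ell,ij}$. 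The constants cancel telescopically in the discrete difference operator, yielding
\[
\nabla_h\cdot\mathbf{B}_{ij} \;=\; \theta_2^{(i,j)}\,\nabla_h\cdot\dbtilde{\mathbf{B}}_{ij} \;=\; \theta_2^{(i,j)}\,\nabla_h\cdot\widetilde{\mathbf{B}}_{ij} \;=\; 0.
\]

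For the PP condition, the crucial technical step is to show that the auxiliary state is compatible with the convex-combination structure of the limiter, namely
\[
\dbtilde{\bf \Pi}_{ij} \;=\; \theta_1\,\widetilde{\bf \Pi}_{ij} + (1-\theta_1)\,\overline{\mathbf{U}}_{ij}
\quad\text{(density component only)},
\qquad
{\bf \Pi}_{ij} \;=\; \theta_2\,\dbtilde{\bf \Pi}_{ij} + (1-\theta_2)\,\overline{\mathbf{U}}_{ij}.
\]
These identities follow by substituting the convex-combination formulas for $\dbtilde{\mathbf{U}}_{\cdot}$ and ${\mathbf{U}}_{\cdot}$ into the definition of ${\bf \Pi}_{ij}$ (the same definition used for $\widetilde{\bf \Pi}_{ij}$ and $\dbtilde{\bf \Pi}_{ij}$), collecting the constant contributions $2\lambda\,\overline{\mathbf{U}}_{ij}$ and simplifying using $\frac{1-2\widehat\omega_1(1-\theta)}{1-2\widehat\omega_1}-\frac{\theta}{1-2\widehat\omega_1}=1-\theta$. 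I would present the density computation once in detail and note that the full-vector version is identical.

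Equipped with these two identities, positivity follows by the standard Zhang--Shu argument. For Step 1: from the definition of $\theta_1$ and $\rho_{\min}$, one checks that $\dbtilde{\rho}_{i\pm\halfone,j}^{\mp,\mu}\ge\epsilon_1$, $\dbtilde{\rho}_{i,j\pm\halfone}^{\mu,\mp}\ge\epsilon_1$, and (for $k\ge 3$) $\dbtilde{\rho}_{ij}^\star\ge\epsilon_1$, since in the worst case $\rho_{\min}<\epsilon_1$ the scaling is chosen exactly so that the minimum equals $\epsilon_1$. For Step 2: the density is a convex combination of two positive values and stays $\ge\epsilon_1$, while concavity of $\mathcal{E}(\mathbf{U})$ on $\{\rho>0\}$ gives
\[
\mathcal{E}({\mathbf{U}}_{\cdot}) \;\ge\; \theta_2\,\mathcal{E}(\dbtilde{\mathbf{U}}_{\cdot}) + (1-\theta_2)\,\mathcal{E}(\overline{\mathbf{U}}_{ij}) \;\ge\; \theta_2\,\mathcal{E}_{\min} + (1-\theta_2)\,\mathcal{E}(\overline{\mathbf{U}}_{ij}) \;=\; \epsilon_2,
\]
the last equality being a direct computation from the formula for $\theta_2$. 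Applying the same chain of inequalities to ${\bf \Pi}_{ij}=\theta_2\,\dbtilde{\bf \Pi}_{ij}+(1-\theta_2)\overline{\mathbf{U}}_{ij}$ (where $\mathcal{E}(\dbtilde{\bf \Pi}_{ij})\ge\mathcal{E}_{\min}$ is built into $\mathcal{E}_{\min}$ when $k\ge 3$) delivers ${\bf \Pi}_{ij}\in G$.

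The main obstacle is simply the bookkeeping needed to establish the convex-combination identity for ${\bf \Pi}_{ij}$: the auxiliary state mixes four different neighboring sums in an affine fractional expression, and one has to carefully verify that the $\overline{\mathbf{U}}_{ij}$-terms introduced by the limiter recombine (via $2\lambda\cdot\widehat\omega_1/\lambda=2\widehat\omega_1$) to reproduce precisely the affine structure on the left-hand side; once that is in hand, the rest of the proof is routine application of concavity and the definitions of $\theta_1,\theta_2$.
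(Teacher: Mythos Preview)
Your proposal is correct and follows essentially the same approach as the paper: both arguments use linearity of the discrete divergence operator together with $\nabla_h\cdot\overline{\mathbf{B}}_{ij}=0$ for the DDF claim, establish the convex-combination identity ${\bf \Pi}_{ij}=\theta_2\,\dbtilde{\bf \Pi}_{ij}+(1-\theta_2)\,\overline{\mathbf{U}}_{ij}$, and then apply the concavity of $\mathcal{E}$ for the PP claims. One small correction: in your displayed chain the final step should read $\ge\epsilon_2$ rather than $=\epsilon_2$, since when $\mathcal{E}_{\min}\ge\epsilon_2$ one has $\theta_2=1$ and the value is $\mathcal{E}_{\min}$, not $\epsilon_2$.
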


\begin{proof}
	Thanks to the linearity of the discrete divergence operator $\nabla_h \cdot$, we have 
	\begin{equation*}
		\nabla_h \cdot {\bf B}_{ij} = \theta_2 \nabla_h \cdot \dbtilde{\bf B}_{ij} + (1-\theta_2) \nabla_h \cdot \overline {\bf B}_{ij} 
		=  \theta_2 \nabla_h \cdot \dbtilde{\bf B}_{ij} = \theta_2 \nabla_h \cdot \widetilde{\bf B}_{ij} = 0. 
	\end{equation*}
Hence the DDF condition \eqref{eq:DDF} is preserved by the PP limiter. In the following, we will verify the conditions \eqref{LF:cond1}. 
Note that 
\begin{align*}
 \dbtilde{\rho}_{i\pm\halfone,j}^{\mp,\mu} 
	&= \theta_1  \left(\widetilde{\rho}_{i\pm\halfone,j}^{\mp,\mu} - \overline{\rho}_{ij}\right) + \overline{\rho}_{ij} 
	 \ge \theta_1  \left(\rho_{\min} - \overline{\rho}_{ij}\right) + \overline{\rho}_{ij}
	 \\ & = -\theta_1 \left| \rho_{\min} - \overline{\rho}_{ij} \right| + \left( \overline{\rho}_{ij} - \epsilon_1 \right) + \epsilon_1 
\ge \epsilon_1 > 0, 
\end{align*}
which implies 
$
	\rho_{i\pm\halfone,j}^{\mp,\mu} =\theta_2\dbtilde{\rho}_{i\pm\halfone,j}^{\mp,\mu}    + (1-\theta_2) \overline{\rho}_{ij} > 0. 
$ 
	Because the function ${\mathcal E}({\bf U})$ is concave with respect to ${\bf U}$ when $\rho > 0$, applying Jensen's inequality gives 
	\begin{align*}
		{\mathcal E}( {\mathbf{U}}_{i\pm\halfone,j}^{\mp,\mu} )  & = 
		{\mathcal E} \Big( \theta_2 \dbtilde{\mathbf{U}}_{i\pm\halfone,j}^{\mp,\mu} + (1-\theta_2) \overline{\mathbf{U}}_{ij}  \Big) 
		 \ge \theta_2 {\mathcal E} \Big(  \dbtilde{\mathbf{U}}_{i\pm\halfone,j}^{\mp,\mu} \Big) 
		+(1-\theta_2) {\mathcal E} \Big(  \overline{\mathbf{U}}_{ij}  \Big) 
		\\
		& \ge \theta_2 {\mathcal E}_{\min} + (1-\theta_2) {\mathcal E} \Big(  \overline{\mathbf{U}}_{ij}  \Big) =  \theta_2 \left( {\mathcal E}_{\min} - {\mathcal E} (  \overline{\mathbf{U}}_{ij}  ) \right) +  {\mathcal E} (  \overline{\mathbf{U}}_{ij}  )
		\\
		& \ge - \theta_2 \left| {\mathcal E}_{\min} - {\mathcal E} (  \overline{\mathbf{U}}_{ij}  ) \right| + ({\mathcal E} (  \overline{\mathbf{U}}_{ij}  ) - \epsilon_2 ) + \epsilon_2 
		 \ge \epsilon_2 >0. 
	\end{align*}
	Therefore, we have ${\mathbf{U}}_{i\pm\halfone,j}^{\mp,\mu} \in G$. Similar arguments give ${\mathbf{U}}_{i,j+\halfone}^{\mu,\pm}\in G$. For $k\ge 3$, we observe that 
	\begin{equation*}
		{\bf \Pi}_{i\pm\halfone,j}^{\mp} = \theta_2 \Big( \dbtilde {\bf \Pi}_{i\pm\halfone,j}^{\mp} -  \overline{\mathbf{U}}_{ij} \Big) + \overline{\mathbf{U}}_{ij}, 
		\qquad   {\bf \Pi}_{i,j\pm\halfone}^{\mp} = \theta_2 
		\Big( \dbtilde {\bf \Pi}_{i,j\pm\halfone}^{\mp} -  \overline{\mathbf{U}}_{ij} \Big) + \overline{\mathbf{U}}_{ij}, 
	\end{equation*}
which yield 
\begin{equation}\label{key5433}
	{\bf \Pi}_{ij} = \theta_2 \Big( \dbtilde{\bf \Pi}_{ij} -  \overline{\mathbf{U}}_{ij} \Big) + \overline{\mathbf{U}}_{ij} = \theta_2 \dbtilde{\bf \Pi}_{ij} + (1-\theta_2) \overline{\mathbf{U}}_{ij}.
\end{equation}
Let ${\bf n}_1 = (1,0,0,0,0,0,0,0)^\top$. Similarly, we have 
\begin{align*}
\dbtilde	{\bf \Pi}_{ij} \cdot {\bf n}_1 = \theta_1 \Big( \widetilde{\bf \Pi}_{ij} \cdot {\bf n}_1 -  \overline{\mathbf{U}}_{ij} \cdot {\bf n}_1 \Big) + \overline{\mathbf{U}}_{ij} \cdot {\bf n}_1 = \theta_1 \Big( \rho_{ij}^{\star} -  \overline{\rho}_{ij}   \Big) + \overline{\rho}_{ij}  > 0,
\end{align*}
which together with \eqref{key5433} implies 
$
{\bf \Pi}_{ij} \cdot {\bf n}_1 = \theta_2 \dbtilde{\bf \Pi}_{ij} \cdot {\bf n}_1 + (1-\theta_2) \overline{\rho}_{ij} > 0.
$ 
Applying Jensen's inequality to ${\mathcal E}({\bf U})$, we obtain 
\begin{align*}
	{\mathcal E} ({\bf \Pi}_{ij}) &= {\mathcal E} \Big(  \theta_2 \dbtilde{\bf \Pi}_{ij} + (1-\theta_2) \overline{\mathbf{U}}_{ij} \Big) 
	\ge  \theta_2{\mathcal E} (  \dbtilde{\bf \Pi}_{ij}  ) + (1-\theta_2){\mathcal E} (   \overline{\mathbf{U}}_{ij} ) 
	\\
	& \ge \theta_2 {\mathcal E}_{\min} + (1-\theta_2) {\mathcal E} (  \overline{\mathbf{U}}_{ij}  ) 
 \ge \epsilon_2 >0. 
\end{align*}
Therefore, we have ${\bf \Pi}_{ij} \in G$ when the order of accuracy $k\ge 3$. In summary, the PP conditions \eqref{LF:cond1} hold. The proof is completed. 
\end{proof}

%\subsection{Discretization of the source term}\label{PCP:Source}

\section{Rigorous analysis of positivity-preserving property} \label{PCP:Sec}
In this section, we rigorously analyze the PP property of the proposed finite volume method and give the proof of \Cref{thm:main}.  
The analysis and proof are very technical and difficult. 
There are two main challenges. First, the point values $\{ {\mathbf{U}}_{i+\halfone,j}^{\pm,\mu}, {\mathbf{U}}_{i,j+\halfone}^{\mu,\pm}\}$ 
in our finite volume method are strongly coupled by the 
important DDF condition \eqref{eq:DDF}, this makes the PP analysis very complicated. In particular, such a strong coupling invalidates the classic convex decomposition technique \cite{zhang2010,zhang2010b}, which is based on splitting a high-order multidimensional scheme into convex combination of formally  fist-order one-dimensional PP schemes.  
The second challenge lies in the high nonlinearity of the MHD system: particularly, the flux  ${\bf F}_i({\bf U})$ and the internal energy $\mathcal{E}(\mathbf{U})$ are highly nonlinear functions of the conservative vector ${\bf U}$. In order to analyze the positivity of $\mathcal{E}(\mathbf{U})$, one has to substitute the high-order scheme into the complicated function  $\mathcal{E}(\mathbf{U})$ to judge the positivity of the computed internal energy $\mathcal{E}$.      
In the following, we will introduce the GQL approach \cite{WuShu2021GQL} to overcome these difficulties. 

% via the recently developed GQL approach \cite{Wu2017a,WuShu2021GQL}

\subsection{Geometric quasilinearization} 
We first give a brief review of the GQL approach, which {\em equivalently} transfers the intractable {\em nonlinear} constraint  $\mathcal{E}(\mathbf{U})>0$ in \eqref{eq:G} into a family of {\em linear} constraints. More specifically, the GQL approach provides the following  
equivalent linear representation of the admissible set $G$, which will be the foundation stone of our PP analysis and proof of \Cref{thm:main}.

\begin{lemma}[GQL representation \cite{Wu2017a}] \label{Lemma:Gstar}
	The admissible state set $G$ defined in \eqref{eq:G} is equivalent to 
	\begin{equation}\label{eq:Gs}
		G_* = \left\{
		\mathbf{U}=(\rho, \mathbf{m}, \mathbf{B}, E)^\top:~ \mathbf{U}\cdot \mathbf{n}_1>0,~ \mathbf{U}\cdot \mathbf{n}^{*} + \frac{| \mathbf{B}^* |^2}{2} >0~  \forall \mathbf{v}^*, \mathbf{B}^*\in \mathbb{R}^3
		\right\},
	\end{equation}	
	where $\mathbf{n}_1 =(1,0,0,0,0,0,0,0)^\top$,  $\mathbf{n}^{*}=\left(\frac{|\mathbf{v}^*|^2}{2}, -\mathbf{v}^*, -\mathbf{B}^*, 1\right)^{\top}$, and the variables $\{\mathbf{v}^*, \mathbf{B}^*\}$ are called the extra free auxiliary variables in the GQL framework \cite{WuShu2021GQL} which are introduced  in exchange for linearity.
\end{lemma}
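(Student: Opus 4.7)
The plan is to prove the set equality $G = G_*$ by establishing the two inclusions separately, with the whole argument hinging on a single completion-of-squares identity for the quantity $\mathbf{U}\cdot \mathbf{n}^{*} + \tfrac{1}{2}|\mathbf{B}^*|^2$.

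First I would expand
$$\mathbf{U}\cdot \mathbf{n}^{*} + \tfrac{1}{2}|\mathbf{B}^*|^2 = \rho \tfrac{|\mathbf{v}^*|^2}{2} - \mathbf{m}\cdot \mathbf{v}^* - \mathbf{B}\cdot \mathbf{B}^* + E + \tfrac{1}{2}|\mathbf{B}^*|^2,$$
and, assuming for now that $\rho>0$, regroup the $\mathbf{v}^*$-terms as $\tfrac{\rho}{2}|\mathbf{v}^* - \mathbf{m}/\rho|^2 - \tfrac{|\mathbf{m}|^2}{2\rho}$ and the $\mathbf{B}^*$-terms as $\tfrac{1}{2}|\mathbf{B}^* - \mathbf{B}|^2 - \tfrac{1}{2}|\mathbf{B}|^2$. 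Substituting these back yields the key identity
$$\mathbf{U}\cdot \mathbf{n}^{*} + \tfrac{1}{2}|\mathbf{B}^*|^2 = \tfrac{\rho}{2}\,|\mathbf{v}^* - \mathbf{m}/\rho|^2 + \tfrac{1}{2}|\mathbf{B}^* - \mathbf{B}|^2 + \mathcal{E}(\mathbf{U}),$$
valid whenever $\rho>0$. All the content of the lemma is packed into this single identity.

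For $G \subseteq G_*$: if $\mathbf{U}\in G$ then $\mathbf{U}\cdot\mathbf{n}_1 = \rho > 0$, and the identity above is a sum of two nonnegative terms plus the strictly positive quantity $\mathcal{E}(\mathbf{U})$, so it is strictly positive for every choice of $\mathbf{v}^*,\mathbf{B}^*\in\mathbb{R}^3$; hence $\mathbf{U}\in G_*$. For $G_* \subseteq G$: if $\mathbf{U}\in G_*$ then by definition $\rho = \mathbf{U}\cdot\mathbf{n}_1 > 0$, so the identity is available; specializing to the minimizing choice $\mathbf{v}^* = \mathbf{m}/\rho$, $\mathbf{B}^* = \mathbf{B}$ collapses the two quadratic terms to zero and gives $\mathcal{E}(\mathbf{U}) = \mathbf{U}\cdot \mathbf{n}^{*} + \tfrac{1}{2}|\mathbf{B}^*|^2 > 0$, so $\mathbf{U}\in G$.

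There is no real obstacle here beyond spotting the right completion of squares; the only mild subtlety is the order of operations in the reverse inclusion, where one must extract $\rho>0$ from the linear constraint $\mathbf{U}\cdot\mathbf{n}_1>0$ \emph{before} invoking the identity (since the identity itself is only meaningful when $\rho>0$, as it involves $\mathbf{m}/\rho$). Once positivity of $\rho$ is in hand, the specialization of $(\mathbf{v}^*,\mathbf{B}^*)$ that saturates the quadratic lower bound is forced by the structure of the two squares, and completes the proof.
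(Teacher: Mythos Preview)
Your proof is correct. The paper does not actually supply its own proof of this lemma; it merely cites \cite{Wu2017a} for the original argument and \cite{WuShu2021GQL} for a geometric interpretation, so there is no in-paper proof to compare against. Your completion-of-squares identity is precisely the standard route to this result and matches what one finds in the cited reference.
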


The first proof of \Cref{Lemma:Gstar} can be found in \cite{Wu2017a}, while its geometric interpretation was later given in \cite{WuShu2021GQL}. It is worth noting that, 
although a few extra variables $\{\mathbf{v}^*, \mathbf{B}^*\}$ are introduced in \eqref{eq:Gs}, all the constraints in $G_*$ become linear, which greatly facilitates the PP analysis.  
In fact, 
such a linear equivalent form (called GQL representation) can be constructed for any convex sets, as shown in \cite{WuShu2021GQL}.

 %Thanks to the linearly GQL representation, literature \cite{Wu2017a} presents some significant results:	
 
%We introduce several useful inequalities \eqref{eq:keyieq}--\eqref{LF:pf-eq2}. %, which will be useful in proving \Cref{thm:main}. 

\begin{lemma}[see \cite{Wu2017a}] \label{Lemma:LF}
	Given any two admissible states $\mathbf{U}, \widetilde{\mathbf{U}} \in G$, we have 
	\begin{equation}\label{eq:keyieq}
		\left(\mathbf{U} -\frac{\mathbf{F}_\ell(\mathbf{U})}{\alpha} + \widetilde{\mathbf{U}}+\frac{\mathbf{F}_\ell(\widetilde{\mathbf{U}})}{\alpha} \right) \cdot \mathbf{n}^* + {| \mathbf{B}^* |}^2 + \frac{B_\ell-\widetilde{B}_\ell}{\alpha}(\mathbf{v}^* \cdot \mathbf{B}^*) >0,
	\end{equation} 
	 for any free auxiliary variables $\mathbf{v}^*, \mathbf{B}^*\in \mathbb{R}^3$ and $\alpha>\alpha_\ell(\mathbf{U}, \widetilde{\mathbf{U}})$, where $\alpha_\ell$ is defined in \Cref{thm:main} and $\ell \in \{1,2,3\}$.
\end{lemma}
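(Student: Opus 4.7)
The plan is to use the GQL representation from \Cref{Lemma:Gstar} as the main tool: since it converts the intractable nonlinearity $\mathcal{E}(\mathbf{U})>0$ into a family of linear constraints in the free variables $(\mathbf{v}^*,\mathbf{B}^*)$, and the target inequality is already linear in $(\mathbf{v}^*,\mathbf{B}^*)$ plus the fixed quadratic term $|\mathbf{B}^*|^2$, I would split the left-hand side of \eqref{eq:keyieq} symmetrically as $T_+(\mathbf{U})+T_-(\widetilde{\mathbf{U}})$, where
\begin{equation*}
T_+(\mathbf{U}) := \mathbf{U}\cdot\mathbf{n}^* + \frac{|\mathbf{B}^*|^2}{2} - \frac{1}{\alpha}\bigl[\mathbf{F}_\ell(\mathbf{U})\cdot\mathbf{n}^* - B_\ell(\mathbf{v}^*\cdot\mathbf{B}^*)\bigr],
\end{equation*}
\begin{equation*}
T_-(\widetilde{\mathbf{U}}) := \widetilde{\mathbf{U}}\cdot\mathbf{n}^* + \frac{|\mathbf{B}^*|^2}{2} + \frac{1}{\alpha}\bigl[\mathbf{F}_\ell(\widetilde{\mathbf{U}})\cdot\mathbf{n}^* - \widetilde{B}_\ell(\mathbf{v}^*\cdot\mathbf{B}^*)\bigr].
\end{equation*}
A direct check shows that $T_+(\mathbf{U})+T_-(\widetilde{\mathbf{U}})$ equals the LHS of \eqref{eq:keyieq}, so it suffices to analyze each piece.

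Before estimating $T_\pm$, I would record two algebraic identities that are verified by direct calculation from the MHD flux formulas. Completing the square gives
\begin{equation*}
\mathbf{U}\cdot\mathbf{n}^* + \frac{|\mathbf{B}^*|^2}{2} = \frac{\rho}{2}|\mathbf{v}-\mathbf{v}^*|^2 + \frac{1}{2}|\mathbf{B}-\mathbf{B}^*|^2 + \mathcal{E}(\mathbf{U}),
\end{equation*}
which recasts the GQL positivity as a sum of nonnegative quadratic forms in the auxiliary variables plus the strictly positive internal energy, while a short flux computation yields
\begin{equation*}
\mathbf{F}_\ell(\mathbf{U})\cdot\mathbf{n}^* - B_\ell(\mathbf{v}^*\cdot\mathbf{B}^*) = v_\ell\bigl(\mathbf{U}\cdot\mathbf{n}^*\bigr) + (v_\ell - v_\ell^*)\,p_{tot} - B_\ell(\mathbf{v}-\mathbf{v}^*)\cdot(\mathbf{B}-\mathbf{B}^*),
\end{equation*}
which absorbs the awkward cross term $\mathbf{v}^*\cdot\mathbf{B}^*$ into a controllable bilinear residual. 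Together, these rewrite $T_+$ as a manifest quadratic form in the shifted variables $(\mathbf{v}^*-\mathbf{v},\,\mathbf{B}^*-\mathbf{B})$ plus constant and linear pieces, with block Hessian $\begin{pmatrix}(1-v_\ell/\alpha)\rho\,I_3 & (B_\ell/\alpha)I_3\\ (B_\ell/\alpha)I_3 & (1-v_\ell/\alpha)I_3\end{pmatrix}$ on the components aligned with the $\ell$-direction. A Schur-complement argument combined with controlling the linear perturbation from $(v_\ell-v_\ell^*)p_{tot}/\alpha$ then yields positivity of $T_+$ once $\alpha > |v_\ell|+\mathcal{C}_\ell$, reproducing precisely the fast magnetosonic speed $\mathcal{C}_\ell$ defined in \Cref{thm:main}; the argument for $T_-$ is symmetric.

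The main obstacle lies in upgrading these one-sided bounds to the sharp HLL-type threshold $\alpha_\ell(\mathbf{U},\widetilde{\mathbf{U}})$ of \Cref{thm:main}, which also contains the Roe-averaged middle value $\tfrac{\sqrt{\rho}v_\ell + \sqrt{\widetilde{\rho}}\widetilde{v}_\ell}{\sqrt{\rho}+\sqrt{\widetilde{\rho}}}+\max\{\mathcal{C}_\ell,\widetilde{\mathcal{C}}_\ell\}$ and the magnetic-jump correction $|\mathbf{B}-\widetilde{\mathbf{B}}|/(\sqrt{\rho}+\sqrt{\widetilde{\rho}})$. Treating $T_+$ and $T_-$ separately would be too lossy, because the worst-case $(\mathbf{v}^*,\mathbf{B}^*)$ for one piece is generally not the worst case for the other. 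I would therefore perform a joint minimization of $T_+(\mathbf{U})+T_-(\widetilde{\mathbf{U}})$ over $(\mathbf{v}^*,\mathbf{B}^*)$: the critical point in $\mathbf{v}^*$ emerges as the Roe-averaged velocity $(\sqrt{\rho}\mathbf{v}+\sqrt{\widetilde{\rho}}\widetilde{\mathbf{v}})/(\sqrt{\rho}+\sqrt{\widetilde{\rho}})$, the critical point in $\mathbf{B}^*$ as the arithmetic mean $(\mathbf{B}+\widetilde{\mathbf{B}})/2$, and evaluating the minimum produces exactly the jump term $|\mathbf{B}-\widetilde{\mathbf{B}}|/(\sqrt{\rho}+\sqrt{\widetilde{\rho}})$. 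The delicate part is handling the cross couplings between $\mathbf{v}^*$ and $\mathbf{B}^*$ introduced by the two bilinear residuals simultaneously; this requires a weighted Young inequality with weight tuned to $\mathcal{C}_\ell$, and the careful bookkeeping of these weights is what ultimately produces the exact formula for $\alpha_\ell$ stated in \Cref{thm:main}.
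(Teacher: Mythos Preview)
The paper does not prove \Cref{Lemma:LF}; it is quoted from \cite{Wu2017a} without proof, so there is no in-paper argument to compare against. Your outline is, however, essentially the strategy used in that reference: the completing-the-square identity
\[
\mathbf{U}\cdot\mathbf{n}^* + \tfrac{|\mathbf{B}^*|^2}{2} = \tfrac{\rho}{2}|\mathbf{v}-\mathbf{v}^*|^2 + \tfrac{1}{2}|\mathbf{B}-\mathbf{B}^*|^2 + \mathcal{E}(\mathbf{U})
\]
and the flux identity
\[
\mathbf{F}_\ell(\mathbf{U})\cdot\mathbf{n}^* - B_\ell(\mathbf{v}^*\cdot\mathbf{B}^*) = v_\ell\,\mathbf{U}\cdot\mathbf{n}^* + (v_\ell-v_\ell^*)p_{tot} - B_\ell(\mathbf{v}-\mathbf{v}^*)\cdot(\mathbf{B}-\mathbf{B}^*)
\]
are both correct and are the core algebraic steps. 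One caution: your separate analysis of $T_+$ and $T_-$ alone would already yield positivity once $\alpha>\max\{|v_\ell|+\mathcal{C}_\ell,\,|\widetilde v_\ell|+\widetilde{\mathcal{C}}_\ell\}$, which is \emph{weaker} than $\alpha_\ell(\mathbf{U},\widetilde{\mathbf{U}})$ (since $\alpha_\ell$ is the maximum of three terms plus a nonnegative jump correction). So the joint minimization you describe is not needed to \emph{sharpen} the threshold---it is needed because the one-sided pieces $T_\pm$ are in fact \emph{not} individually positive for all $(\mathbf{v}^*,\mathbf{B}^*)$ under the milder one-sided speed bounds (the unshifted $|\mathbf{B}^*|^2/2$ in the $v_\ell\mathbf{U}\cdot\mathbf{n}^*$ term spoils that). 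The coupling between the two pieces is what makes the Roe-averaged velocity and the magnetic-jump term $|\mathbf{B}-\widetilde{\mathbf{B}}|/(\sqrt{\rho}+\sqrt{\widetilde{\rho}})$ appear, exactly as you anticipated in your last paragraph.
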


As a direct consequence of \Cref{Lemma:LF}, we have the following estimates. 

\begin{corollary}
	For any $\mathbf{U}, \widetilde{\mathbf{U}} \in G$,  $\mathbf{v}^* , \mathbf{B}^* \in \mathbb{R}^3$, and $\ell\in \{1,2,3\}$, it holds that   
%	\begin{align}\label{LF:pf-eq1}
%		-\left( \mathbf{F}_\ell (\mathbf{U})-\mathbf{F}_\ell(\widetilde{\mathbf{U}}) \right) \cdot \mathbf{n}_1 & > -\alpha_\ell(\mathbf{U}, \widetilde{\mathbf{U}}) (\mathbf{U}+ \widetilde{\mathbf{U}}) \cdot \mathbf{n}_1,
%		\\
%	\label{LF:pf-eq2}
%			-\left( \mathbf{F}_\ell(\mathbf{U})-\mathbf{F}_\ell(\widetilde{\mathbf{U}}) \right) \cdot \mathbf{n}^* &\ge -\alpha_\ell(\mathbf{U}, \widetilde{\mathbf{U}}) \left( (\mathbf{U}+ \widetilde{\mathbf{U}}) \cdot \mathbf{n}^* + |\mathbf{B}^*|^2 \right) - (B_\ell-\tilde{B}_\ell) (\mathbf{v}^* \cdot \mathbf{B}^*). 
%	\end{align}
	\begin{equation}
		\resizebox{0.58\hsize}{!}{$
			\begin{aligned}
				\label{LF:pf-eq1}
				-\left( \mathbf{F}_\ell (\mathbf{U})-\mathbf{F}_\ell(\widetilde{\mathbf{U}}) \right) \cdot \mathbf{n}_1  > -\alpha_\ell(\mathbf{U}, \widetilde{\mathbf{U}}) (\mathbf{U}+ \widetilde{\mathbf{U}}) \cdot \mathbf{n}_1, 
			\end{aligned}
			$}
	\end{equation}
	\begin{equation}
		\resizebox{0.92\hsize}{!}{$
			\begin{aligned}
				\label{LF:pf-eq2}
				-\left( \mathbf{F}_\ell(\mathbf{U})-\mathbf{F}_\ell(\widetilde{\mathbf{U}}) \right) \cdot \mathbf{n}^* \ge -\alpha_\ell(\mathbf{U}, \widetilde{\mathbf{U}}) \left( (\mathbf{U}+ \widetilde{\mathbf{U}}) \cdot \mathbf{n}^* + |\mathbf{B}^*|^2 \right) - (B_\ell-\tilde{B}_\ell) (\mathbf{v}^* \cdot \mathbf{B}^*).
			\end{aligned}
			$}
	\end{equation}
\end{corollary}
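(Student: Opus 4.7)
The plan is to derive both inequalities directly from \Cref{Lemma:LF}, with the key observation that the strict inequality in \eqref{eq:keyieq} holds for \emph{all} $\alpha > \alpha_\ell(\mathbf{U}, \widetilde{\mathbf{U}})$, so passing to the limit $\alpha \to \alpha_\ell(\mathbf{U}, \widetilde{\mathbf{U}})^+$ (by continuity of both sides in $\alpha$) is a natural and robust strategy.

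For \eqref{LF:pf-eq2}, the approach is essentially mechanical. Starting from \eqref{eq:keyieq}, I would multiply both sides by $\alpha > 0$ to clear the denominators, then rearrange so that the $-(\mathbf{F}_\ell(\mathbf{U}) - \mathbf{F}_\ell(\widetilde{\mathbf{U}})) \cdot \mathbf{n}^*$ term is isolated on the left. The resulting strict inequality holds for every $\alpha > \alpha_\ell(\mathbf{U}, \widetilde{\mathbf{U}})$; since the right-hand side is affine (hence continuous) in $\alpha$, taking the infimum over admissible $\alpha$ yields the desired non-strict inequality $\ge$ at $\alpha = \alpha_\ell(\mathbf{U}, \widetilde{\mathbf{U}})$. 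No choice of auxiliary variables $\mathbf{v}^*, \mathbf{B}^*$ is needed—the inequality is asserted for arbitrary such vectors, matching the universal quantifier in \eqref{eq:keyieq}.

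For \eqref{LF:pf-eq1}, the situation is subtler because the inequality is strict. A purely asymptotic argument (e.g., setting $\mathbf{B}^* = \mathbf{0}$, $\mathbf{v}^* = s\mathbf{w}$, dividing by $s^2/2$ to extract $\mathbf{n}_1$ in the limit $s \to \infty$) would degrade the strict inequality to a non-strict one, so I would instead give a direct proof using the explicit form of $\alpha_\ell$. The inequality reduces to $m_\ell - \widetilde m_\ell < \alpha_\ell(\mathbf{U}, \widetilde{\mathbf{U}})(\rho + \widetilde\rho)$, since $\mathbf{n}_1$ extracts the density component of $\mathbf{U}$ and the momentum component of $\mathbf{F}_\ell$. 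Using $m_\ell = \rho v_\ell$, $\widetilde m_\ell = \widetilde\rho \widetilde v_\ell$, and the bound $\alpha_\ell(\mathbf{U}, \widetilde{\mathbf{U}}) \ge \max\{|v_\ell| + \mathcal{C}_\ell, |\widetilde v_\ell| + \widetilde{\mathcal{C}}_\ell\} > \max\{|v_\ell|, |\widetilde v_\ell|\}$ (strict because $\mathcal{C}_\ell, \widetilde{\mathcal{C}}_\ell > 0$ on the admissible set $G$ where $p > 0$), I get $\rho v_\ell - \widetilde\rho \widetilde v_\ell \le \rho |v_\ell| + \widetilde\rho |\widetilde v_\ell| < \alpha_\ell(\mathbf{U}, \widetilde{\mathbf{U}})(\rho + \widetilde\rho)$, which is exactly \eqref{LF:pf-eq1}.

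The main obstacle—really a minor one—is the asymmetry between $>$ in \eqref{LF:pf-eq1} and $\ge$ in \eqref{LF:pf-eq2}: a single unified argument via \eqref{eq:keyieq} and a limit naturally produces $\ge$, which suffices for \eqref{LF:pf-eq2} but is too weak for \eqref{LF:pf-eq1}. Recognizing that the strict sign-characteristic bound $\mathcal{C}_\ell > 0$ (ultimately the positivity of the sound speed on $G$) is the right quantitative ingredient for \eqref{LF:pf-eq1} resolves this cleanly without needing to revisit the proof of \Cref{Lemma:LF}.
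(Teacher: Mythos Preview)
Your proposal is correct and matches the paper's proof essentially verbatim: the paper proves \eqref{LF:pf-eq1} directly via $\widetilde\rho\widetilde v_\ell-\rho v_\ell\ge -\max\{|v_\ell|,|\widetilde v_\ell|\}(\rho+\widetilde\rho)>-\alpha_\ell(\mathbf{U},\widetilde{\mathbf{U}})(\rho+\widetilde\rho)$, and obtains \eqref{LF:pf-eq2} by rearranging \eqref{eq:keyieq} and letting $\alpha\to\alpha_\ell(\mathbf{U},\widetilde{\mathbf{U}})$. Your observation that the strict inequality in \eqref{LF:pf-eq1} ultimately rests on $\mathcal{C}_\ell>0$ (equivalently $p>0$ on $G$) is exactly the ingredient the paper uses implicitly.
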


\begin{proof}
	The inequality \eqref{LF:pf-eq1} can be derived as follows: 
	$$
	-\left( \mathbf{F}_\ell (\mathbf{U})-\mathbf{F}_\ell(\widetilde{\mathbf{U}}) \right) \cdot \mathbf{n}_1 = \widetilde{\rho} \widetilde{v}_\ell - \rho v_\ell 
	\ge  - \max \{ |v_\ell| ,  \left|\widetilde{v}_\ell \right| \} ( \rho + \widetilde{\rho} ) > -\alpha_\ell(\mathbf{U}, \widetilde{\mathbf{U}}) (\mathbf{U}+ \widetilde{\mathbf{U}}) \cdot \mathbf{n}_1.
	$$
	The inequality \eqref{LF:pf-eq2} follows from \eqref{eq:keyieq} by reformulation and taking $\alpha \to \alpha_\ell(\mathbf{U}, \widetilde{\mathbf{U}})$. 
\end{proof}

The following lemma will be useful in studying the effect of approximate source term \eqref{Dis:LFSourT2} on the PP property. 

\begin{lemma}[see \cite{WuShu2018}] \label{Lemma:SourT}
	For any $\mathbf{U} \in G$, any $\mathbf{v}^* , \mathbf{B}^* \in \mathbb{R}^3$, and $\xi \in \mathbb{R}$, we have 
	\begin{equation}
		-\xi \left( \mathbf{S}(\mathbf{U})\cdot \mathbf{n}^* \right) \geq \xi (\mathbf{v}^* \cdot \mathbf{B}^*) - \frac{|\xi|}{\sqrt{\rho}} \left( \mathbf{U} \cdot \mathbf{n}^* + \frac{|\mathbf{B}^*|^2}{2} \right).
	\end{equation}
\end{lemma}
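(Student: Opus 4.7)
The plan is to expand both sides explicitly and reduce the inequality to a single Cauchy--Schwarz/AM--GM estimate after a completing-the-square step. First I would compute
\[
\mathbf{S}(\mathbf{U})\cdot\mathbf{n}^* = -\mathbf{B}\cdot\mathbf{v}^* - \mathbf{v}\cdot\mathbf{B}^* + \mathbf{v}\cdot\mathbf{B}
\]
directly from the definitions $\mathbf{S}(\mathbf{U})=(0,\mathbf{B},\mathbf{v},\mathbf{v}\cdot\mathbf{B})^\top$ and $\mathbf{n}^*=(|\mathbf{v}^*|^2/2,-\mathbf{v}^*,-\mathbf{B}^*,1)^\top$. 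Using the identity $\mathbf{B}\cdot\mathbf{v}^* + \mathbf{v}\cdot\mathbf{B}^* - \mathbf{v}\cdot\mathbf{B} - \mathbf{v}^*\cdot\mathbf{B}^* = -(\mathbf{v}-\mathbf{v}^*)\cdot(\mathbf{B}-\mathbf{B}^*)$, the claimed inequality becomes equivalent to
\[
\frac{|\xi|}{\sqrt{\rho}}\Bigl(\mathbf{U}\cdot\mathbf{n}^* + \tfrac12|\mathbf{B}^*|^2\Bigr) \;\geq\; \xi\,(\mathbf{v}-\mathbf{v}^*)\cdot(\mathbf{B}-\mathbf{B}^*).
\]

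Next, I would rewrite the left-hand side in a manifestly nonnegative, sum-of-squares form. Substituting $E = \rho e + \tfrac12\rho|\mathbf{v}|^2 + \tfrac12|\mathbf{B}|^2$ into $\mathbf{U}\cdot\mathbf{n}^*$ and grouping the velocity and magnetic terms yields the key identity
\[
\mathbf{U}\cdot\mathbf{n}^* + \tfrac12|\mathbf{B}^*|^2 \;=\; \rho e \;+\; \tfrac{\rho}{2}|\mathbf{v}-\mathbf{v}^*|^2 \;+\; \tfrac12|\mathbf{B}-\mathbf{B}^*|^2.
\]
Because $\mathbf{U}\in G$ guarantees $\rho e >0$, I can discard this term to get a valid lower bound, and since $|\xi|\geq \xi$, it suffices to establish
\[
\frac{|\xi|}{2}\Bigl(\sqrt{\rho}\,|\mathbf{v}-\mathbf{v}^*|^2 + \tfrac{1}{\sqrt{\rho}}|\mathbf{B}-\mathbf{B}^*|^2\Bigr) \;\geq\; \xi\,(\mathbf{v}-\mathbf{v}^*)\cdot(\mathbf{B}-\mathbf{B}^*).
\]

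The final step is a weighted AM--GM bound $\sqrt{\rho}\,a^2 + \tfrac{1}{\sqrt{\rho}}b^2 \geq 2ab$ applied with $a=|\mathbf{v}-\mathbf{v}^*|$ and $b=|\mathbf{B}-\mathbf{B}^*|$, followed by Cauchy--Schwarz $(\mathbf{v}-\mathbf{v}^*)\cdot(\mathbf{B}-\mathbf{B}^*)\leq |\mathbf{v}-\mathbf{v}^*|\,|\mathbf{B}-\mathbf{B}^*|$ and the trivial $|\xi|\geq \xi$. The main (mild) obstacle is bookkeeping the $\sqrt{\rho}$ weights: one must pair $\tfrac{\rho}{2}|\mathbf{v}-\mathbf{v}^*|^2$ with $\tfrac12|\mathbf{B}-\mathbf{B}^*|^2$ through the $\tfrac{1}{\sqrt{\rho}}$ prefactor so that the geometric mean is density-independent. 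This is precisely why the statement carries the specific weight $\tfrac{1}{\sqrt{\rho}}$, and once this pairing is recognized the chain of inequalities closes immediately.
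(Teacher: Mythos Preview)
Your proof is correct. The paper does not supply its own proof of this lemma but simply cites \cite{WuShu2018}; your argument---computing $\mathbf{S}(\mathbf{U})\cdot\mathbf{n}^*$ explicitly, rewriting $\mathbf{U}\cdot\mathbf{n}^*+\tfrac12|\mathbf{B}^*|^2$ as the sum of squares $\rho e+\tfrac{\rho}{2}|\mathbf{v}-\mathbf{v}^*|^2+\tfrac12|\mathbf{B}-\mathbf{B}^*|^2$, and then closing with weighted AM--GM and Cauchy--Schwarz---is exactly the standard route and matches what that reference does.
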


\subsection{Proof of \Cref{thm:main}}

We are ready to rigorously prove \Cref{thm:main}.

\begin{proof}
	Under conditions \eqref{LF:cond1}, the GQL representation in \Cref{Lemma:Gstar} implies 
	\begin{align}\label{eq:329}
		\mathbf{U}_{i+\halfone,j}^{\pm,\mu} \cdot {\bf n}_1 >0, \qquad 
		\mathbf{U}_{i+\halfone,j}^{\pm,\mu}  \cdot \mathbf{n}^* + \frac{|\mathbf{B}^*|^2}{2} >0,
		\\ \label{eq:330}
		\mathbf{U}_{i,j+\halfone}^{\mu,\pm} \cdot {\bf n}_1 >0, \qquad 
		\mathbf{U}_{i,j+\halfone}^{\mu,\pm}  \cdot \mathbf{n}^* + \frac{|\mathbf{B}^*|^2}{2} >0,
	\end{align}
	and when the accuracy order $k \ge 3$, we have $0<\widehat{\omega}_1 \le \frac16$ and  
%	\begin{align*}
%		&\frac{\overline{\mathbf{U}}_{ij} - \frac{\widehat{\omega}_1}{\lambda}  \left[ \lambda_1 \left( {\bf \Pi}_{i-\halfone,j}^{+} + {\bf \Pi}_{i+\halfone,j}^{-} \right) + \lambda_2 \left( {\bf \Pi}_{i,j-\halfone}^{+} + {\bf \Pi}_{i,j+\halfone}^{-} \right) \right]}{1-2\widehat{\omega}_1} \cdot {\bf n}_1 > 0,
%		\\
%		 &\frac{\overline{\mathbf{U}}_{ij} - \frac{\widehat{\omega}_1}{\lambda}  \left[ \lambda_1 \left( {\bf \Pi}_{i-\halfone,j}^{+} + {\bf \Pi}_{i+\halfone,j}^{-} \right) + \lambda_2 \left( {\bf \Pi}_{i,j-\halfone}^{+} + {\bf \Pi}_{i,j+\halfone}^{-} \right) \right]}{1-2\widehat{\omega}_1} \cdot \mathbf{n}^* + \frac{|\mathbf{B}^*|^2}{2} >0, 
%	\end{align*}
%	which are equivalent to  
%\begin{align} \label{eq:331}
%	\overline{\mathbf{U}}_{ij}  \cdot \mathbf{n}_1 &> \frac{\widehat{\omega}_1}{\lambda}  \left[ \lambda_1 \left( {\bf \Pi}_{i-\halfone,j}^{+} + {\bf \Pi}_{i+\halfone,j}^{-} \right) + \lambda_2 \left( {\bf \Pi}_{i,j-\halfone}^{+} + {\bf \Pi}_{i,j+\halfone}^{-} \right) \right] \cdot \mathbf{n}_1,
%	\\ \label{eq:332}
%	\overline{\mathbf{U}}_{ij}  \cdot \mathbf{n}^* + \frac{|\mathbf{B}^*|^2}{2} &> \frac{\widehat{\omega}_1}{\lambda}  \left[ \lambda_1 \left( {\bf \Pi}_{i-\halfone,j}^{+} + {\bf \Pi}_{i+\halfone,j}^{-} \right) + \lambda_2 \left( {\bf \Pi}_{i,j-\halfone}^{+} + {\bf \Pi}_{i,j+\halfone}^{-} \right) \right] \cdot \mathbf{n}^* +\widehat{\omega}_1  |\mathbf{B}^*|^2. 
%\end{align}	
\begin{equation}
	\label{eq:331}
	\overline{\mathbf{U}}_{ij}  \cdot \mathbf{n}_1 > \frac{\widehat{\omega}_1}{\lambda}  \left[ \lambda_1 \left( {\bf \Pi}_{i-\halfone,j}^{+} + {\bf \Pi}_{i+\halfone,j}^{-} \right) + \lambda_2 \left( {\bf \Pi}_{i,j-\halfone}^{+} + {\bf \Pi}_{i,j+\halfone}^{-} \right) \right] \cdot \mathbf{n}_1,
\end{equation}
\begin{equation}
	\resizebox{0.92\hsize}{!}{$
		\begin{aligned}
			\label{eq:332}
			\overline{\mathbf{U}}_{ij}  \cdot \mathbf{n}^* + \frac{|\mathbf{B}^*|^2}{2} > \frac{\widehat{\omega}_1}{\lambda}  \left[ \lambda_1 \left( {\bf \Pi}_{i-\halfone,j}^{+} + {\bf \Pi}_{i+\halfone,j}^{-} \right) + \lambda_2 \left( {\bf \Pi}_{i,j-\halfone}^{+} + {\bf \Pi}_{i,j+\halfone}^{-} \right) \right] \cdot \mathbf{n}^* +\widehat{\omega}_1  |\mathbf{B}^*|^2.
		\end{aligned} 
		$}
\end{equation}
If $k=2$, then $\widehat{\omega}_1 = \frac12$ and 
{\small$
\overline{\mathbf{U}}_{ij} = \frac{\widehat{\omega}_1}{\lambda}  \left[ \lambda_1 \left( {\bf \Pi}_{i-\halfone,j}^{+} + {\bf \Pi}_{i+\halfone,j}^{-} \right) + \lambda_2 \left( {\bf \Pi}_{i,j-\halfone}^{+} + {\bf \Pi}_{i,j+\halfone}^{-} \right) \right],
$ }
which yields 
%	\begin{align} \label{eq:333}
%	\overline{\mathbf{U}}_{ij}  \cdot \mathbf{n}_1 & = \frac{\widehat{\omega}_1}{\lambda}  \left[ \lambda_1 \left( {\bf \Pi}_{i-\halfone,j}^{+} + {\bf \Pi}_{i+\halfone,j}^{-} \right) + \lambda_2 \left( {\bf \Pi}_{i,j-\halfone}^{+} + {\bf \Pi}_{i,j+\halfone}^{-} \right) \right] \cdot \mathbf{n}_1,
%	\\ \label{eq:334}
%	\overline{\mathbf{U}}_{ij}  \cdot \mathbf{n}^* + \frac{|\mathbf{B}^*|^2}{2} &= \frac{\widehat{\omega}_1}{\lambda}  \left[ \lambda_1 \left( {\bf \Pi}_{i-\halfone,j}^{+} + {\bf \Pi}_{i+\halfone,j}^{-} \right) + \lambda_2 \left( {\bf \Pi}_{i,j-\halfone}^{+} + {\bf \Pi}_{i,j+\halfone}^{-} \right) \right] \cdot \mathbf{n}^* +\widehat{\omega}_1  |\mathbf{B}^*|^2. 
%\end{align}	
\begin{equation}
	\label{eq:333}
	\overline{\mathbf{U}}_{ij}  \cdot \mathbf{n}_1  = \frac{\widehat{\omega}_1}{\lambda}  \left[ \lambda_1 \left( {\bf \Pi}_{i-\halfone,j}^{+} + {\bf \Pi}_{i+\halfone,j}^{-} \right) + \lambda_2 \left( {\bf \Pi}_{i,j-\halfone}^{+} + {\bf \Pi}_{i,j+\halfone}^{-} \right) \right] \cdot \mathbf{n}_1,
\end{equation}
\begin{equation}
	\resizebox{0.92\hsize}{!}{$
		\begin{aligned}
			\label{eq:334}
			\overline{\mathbf{U}}_{ij}  \cdot \mathbf{n}^* + \frac{|\mathbf{B}^*|^2}{2} &= \frac{\widehat{\omega}_1}{\lambda}  \left[ \lambda_1 \left( {\bf \Pi}_{i-\halfone,j}^{+} + {\bf \Pi}_{i+\halfone,j}^{-} \right) + \lambda_2 \left( {\bf \Pi}_{i,j-\halfone}^{+} + {\bf \Pi}_{i,j+\halfone}^{-} \right) \right] \cdot \mathbf{n}^* +\widehat{\omega}_1  |\mathbf{B}^*|^2. 
		\end{aligned} 
		$}
\end{equation}
Combining \eqref{eq:331}--\eqref{eq:332} for $k\ge 3$ with \eqref{eq:333}--\eqref{eq:334} for $k=2$, we obtain for any $k\ge 2$ that 
%	\begin{align} \label{eq:335}
%	\overline{\mathbf{U}}_{ij}  \cdot \mathbf{n}_1 & \ge  \frac{\widehat{\omega}_1}{\lambda}  \left[ \lambda_1 \left( {\bf \Pi}_{i-\halfone,j}^{+} + {\bf \Pi}_{i+\halfone,j}^{-} \right) + \lambda_2 \left( {\bf \Pi}_{i,j-\halfone}^{+} + {\bf \Pi}_{i,j+\halfone}^{-} \right) \right] \cdot \mathbf{n}_1,
%	\\ \label{eq:336}
%	\overline{\mathbf{U}}_{ij}  \cdot \mathbf{n}^* + \frac{|\mathbf{B}^*|^2}{2} & \ge \frac{\widehat{\omega}_1}{\lambda}  \left[ \lambda_1 \left( {\bf \Pi}_{i-\halfone,j}^{+} + {\bf \Pi}_{i+\halfone,j}^{-} \right) + \lambda_2 \left( {\bf \Pi}_{i,j-\halfone}^{+} + {\bf \Pi}_{i,j+\halfone}^{-} \right) \right] \cdot \mathbf{n}^* +\widehat{\omega}_1  |\mathbf{B}^*|^2
%\end{align}	
\begin{equation}
	\label{eq:335}
	\overline{\mathbf{U}}_{ij}  \cdot \mathbf{n}_1  \ge  \frac{\widehat{\omega}_1}{\lambda}  \left[ \lambda_1 \left( {\bf \Pi}_{i-\halfone,j}^{+} + {\bf \Pi}_{i+\halfone,j}^{-} \right) + \lambda_2 \left( {\bf \Pi}_{i,j-\halfone}^{+} + {\bf \Pi}_{i,j+\halfone}^{-} \right) \right] \cdot \mathbf{n}_1,
\end{equation}
\begin{equation}
	\resizebox{0.92\hsize}{!}{$
		\begin{aligned}
			\label{eq:336}
			\overline{\mathbf{U}}_{ij}  \cdot \mathbf{n}^* + \frac{|\mathbf{B}^*|^2}{2}  \ge \frac{\widehat{\omega}_1}{\lambda}  \left[ \lambda_1 \left( {\bf \Pi}_{i-\halfone,j}^{+} + {\bf \Pi}_{i+\halfone,j}^{-} \right) + \lambda_2 \left( {\bf \Pi}_{i,j-\halfone}^{+} + {\bf \Pi}_{i,j+\halfone}^{-} \right) \right] \cdot \mathbf{n}^* +\widehat{\omega}_1  |\mathbf{B}^*|^2
		\end{aligned} 
		$}
\end{equation}
for any free auxiliary variables $\mathbf{v}^* , \mathbf{B}^* \in \mathbb{R}^3$. 
From \eqref{FV:InterfacePi} and \eqref{eq:329}--\eqref{eq:330}, we observe that 
	\begin{equation}\label{LF:cond1_1}
		{\bf \Pi}_{i + \halfone,j}^{\pm} \cdot \mathbf{n}_1 >0, \quad   {\bf \Pi}_{i,j + \halfone}^{\pm}  \cdot \mathbf{n}_1 > 0 \quad \forall i,j,
	\end{equation}
	\begin{equation}\label{LF:cond1_3}
		{\bf \Pi}_{i + \halfone,j}^{\pm} \cdot \mathbf{n}^* + \frac{|\mathbf{B}^*|^2}{2} >0, \quad   {\bf \Pi}_{i,j + \halfone}^{\pm}  \cdot \mathbf{n}^* + \frac{|\mathbf{B}^*|^2}{2} > 0 \quad \forall \mathbf{v}^* , \mathbf{B}^* \in \mathbb{R}^3\quad \forall i,j. 
	\end{equation}
	The estimates \eqref{eq:335}--\eqref{LF:cond1_3} will be useful in showing the PP property \eqref{LF:Update} of the updated cell average $\overline{\mathbf{U}}_{ij}^{\dt} := 
	\overline{\mathbf{U}}_{ij} + \dt \mathcal{L}_{ij}(\overline{\mathbf{U}})$. 
	Substituting the numerical fluxes \eqref{FV:Flux} with \eqref{eq:LFFlux} into \eqref{eq:defL}, we can reformulate $\overline{\mathbf{U}}_{ij}^{\dt} $ as 
	\begin{equation}\label{LF:UpdateRe}
		\begin{aligned}
			\overline{\mathbf{U}}_{ij}^{\dt}
			= &\overline{\mathbf{U}}_{ij} + \frac{1}{2} \lambda_1 \left( {\bf \Pi}_{i+\halfone,j}^+ + {\bf \Pi}_{i-\halfone,j}^- - {\bf \Pi}_{i+\halfone,j}^- - {\bf \Pi}_{i-\halfone,j}^+ \right) \\
			&+ \frac{1}{2} \lambda_2 \left( {\bf \Pi}_{i,j+\halfone}^+ + {\bf \Pi}_{i,j-\halfone}^- - {\bf \Pi}_{i,j+\halfone}^- - {\bf \Pi}_{i,j-\halfone}^+ \right) + {\bf \Pi}_{\rm{{F}}} + \dt {\bf S}_{ij},
		\end{aligned}
	\end{equation}
	where $\lambda_1 = \frac{\alphaLF_1 \dt }{\dx}$, $\lambda_2 = \frac{\alphaLF_2 \dt }{\dx}$, and
	\begin{equation*}
		\begin{aligned}
			{\bf \Pi}_{\rm{{F}}} := &-\frac{1}{2} \frac{\dt}{\dx} \sum_{\mu=1}^{Q} \omega_{\mu} \left[ \left( \mathbf{F}_1(\mathbf{U}_{i+\halfone,j}^{-,\mu}) - \mathbf{F}_1(\mathbf{U}_{i-\halfone,j}^{+,\mu})  \right)   +  \left( \mathbf{F}_1(\mathbf{U}_{i+\halfone,j}^{+,\mu}) - \mathbf{F}_1(\mathbf{U}_{i-\halfone,j}^{-,\mu})  \right)   \right] \\
			& -\frac{1}{2} \frac{\dt}{\dy} \sum_{\mu=1}^{Q} \omega_{\mu} \left[ \left( \mathbf{F}_2(\mathbf{U}_{i,j+\halfone}^{\mu,-}) - \mathbf{F}_2(\mathbf{U}_{i,j-\halfone}^{\mu,+})  \right)   +  \left( \mathbf{F}_2(\mathbf{U}_{i,j+\halfone}^{\mu,+}) - \mathbf{F}_2(\mathbf{U}_{i,j-\halfone}^{\mu,-})  \right)   \right].
		\end{aligned}
	\end{equation*}
	Let us first prove that $\overline{\mathbf{U}}_{ij}^{\dt} \cdot \mathbf{n}_1>0$.
	Thanks to the inequality \eqref{LF:pf-eq1}, we derive 
	\begin{align} \nonumber
			{\bf \Pi}_{\rm{{F}}} \cdot \mathbf{n}_1 > 
			& -\frac{1}{2} \frac{\alphaLF_1 \dt}{\dx} \sum_{\mu=1}^{Q} \omega_{\mu}  \left( \mathbf{U}_{i+\halfone,j}^{-,\mu} + \mathbf{U}_{i-\halfone,j}^{+,\mu}  +  \mathbf{U}_{i+\halfone,j}^{+,\mu} + \mathbf{U}_{i-\halfone,j}^{-,\mu}  \right)  \cdot \mathbf{n}_1 \\ \nonumber
			& -\frac{1}{2} \frac{\alphaLF_2 \dt}{\dy} \sum_{\mu=1}^{Q} \omega_{\mu} \left( \mathbf{U}_{i,j+\halfone}^{\mu,-} + \mathbf{U}_{i,j-\halfone}^{\mu,+} + \mathbf{U}_{i,j+\halfone}^{\mu,+} + \mathbf{U}_{i,j-\halfone}^{\mu,-}  \right) \cdot \mathbf{n}_1
		 \\  \nonumber
			= &-\frac{1}{2} \lambda_1 \left(   {\bf \Pi}_{i+\halfone,j}^- + {\bf \Pi}_{i-\halfone,j}^+ + {\bf \Pi}_{i+\halfone,j}^+ + {\bf \Pi}_{i-\halfone,j}^- \right) \cdot \mathbf{n}_1 
			 \\ \label{eq:511}
			&- \frac{1}{2} \lambda_2 \left(  {\bf \Pi}_{i,j+\halfone}^- + {\bf \Pi}_{i,j-\halfone}^+ + {\bf \Pi}_{i,j+\halfone}^+ + {\bf \Pi}_{i,j-\halfone}^-  \right)  \cdot \mathbf{n}_1 .
	\end{align}
	Note that the first component of ${\bf S}({\bf U})$ is zero, yielding  
	${\bf S}_{ij} \cdot \mathbf{n}_1 =0$. 
	Combining it with \eqref{LF:UpdateRe}--\eqref{eq:511} gives 
	\begin{align}
		\overline{\mathbf{U}}_{ij}^{\dt} \cdot \mathbf{n}_1 > \overline{\mathbf{U}}_{ij} \cdot {\bf n}_1 
		- \lambda_1 \left(  {\bf \Pi}_{i+\halfone,j}^- + {\bf \Pi}_{i-\halfone,j}^+ \right) \cdot {\bf n}_1
		- \lambda_2 \left(  {\bf \Pi}_{i,j+\halfone}^- + {\bf \Pi}_{i,j-\halfone}^+ \right) \cdot {\bf n}_1. 
	\end{align}
	This together with \eqref{eq:335} implies 
	\begin{equation*}
			\begin{aligned}
				\overline{\mathbf{U}}_{ij}^{\dt} \cdot \mathbf{n}_1
				>  
				(\widehat{\omega}_1 - \lambda) \left[\frac{\lambda_1}{\lambda}  \left({\bf \Pi}_{i-\halfone,j}^{+} + {\bf \Pi}_{i+\halfone,j}^{-} \right) \cdot \mathbf{n}_1 
				+ \frac{\lambda_2}{\lambda} \left({\bf \Pi}_{i,j-\halfone}^{+} + {\bf \Pi}_{i,j+\halfone}^{-} \right) \cdot \mathbf{n}_1 \right] > 0,
			\end{aligned}
	\end{equation*}
	where the last inequality follows from \eqref{LF:cond1_1} and the CFL condition \eqref{LF:CFLCond}. 
	Next, let us prove $\overline{\mathbf{U}}_{ij}^{\dt} \cdot \mathbf{n}^* + \frac{|\mathbf{B}^*|^2}{2}>0$ for any 
	 free auxiliary variables $\mathbf{v}^* , \mathbf{B}^* \in \mathbb{R}^3$. 
	 It follows from \eqref{LF:UpdateRe} that 
	 \begin{equation}\label{LF:UpdateRe2}
	 	\resizebox{0.92\hsize}{!}{$
	 		\begin{aligned}
	 			\overline{\mathbf{U}}_{ij}^{\dt} \cdot \mathbf{n}^* &+ \frac{|\mathbf{B}^*|^2}{2}
	 			= \overline{\mathbf{U}}_{ij} \cdot \mathbf{n}^* + \frac{|\mathbf{B}^*|^2}{2} + \frac{1}{2} \lambda_1 \left( {\bf \Pi}_{i+\halfone,j}^+ + {\bf \Pi}_{i-\halfone,j}^- - {\bf \Pi}_{i+\halfone,j}^- - {\bf \Pi}_{i-\halfone,j}^+ \right) \cdot \mathbf{n}^* \\
	 			&+ \frac{1}{2} \lambda_2 \left( {\bf \Pi}_{i,j+\halfone}^+ + {\bf \Pi}_{i,j-\halfone}^- - {\bf \Pi}_{i,j+\halfone}^- - {\bf \Pi}_{i,j-\halfone}^+ \right) \cdot \mathbf{n}^* + {\bf \Pi}_{\rm{{F}}}\cdot \mathbf{n}^* + \dt {\bf S}_{ij} \cdot \mathbf{n}^*.
	 		\end{aligned}
	 		$}
	 \end{equation}
 	 We now estimate the lower bounds for ${\bf \Pi}_{\rm{{F}}}\cdot \mathbf{n}^*$ and $\dt {\bf S}_{ij} \cdot \mathbf{n}^*$, respectively. 
	 Thanks to the inequality \eqref{LF:pf-eq2}, we have 
	 \begin{align} \nonumber
	 	{\bf \Pi}_{\rm{{F}}} \cdot  \mathbf{n}^*    \ge 
			& -\frac{1}{2} \frac{\hat{\alpha}_1^{\rm{LF}} \dt}{\dx} \sum_{\mu=1}^{Q} \omega_{\mu}  \left[ \left( \mathbf{U}_{i+\halfone,j}^{-,\mu} + \mathbf{U}_{i-\halfone,j}^{+,\mu}  +  \mathbf{U}_{i+\halfone,j}^{+,\mu} + \mathbf{U}_{i-\halfone,j}^{-,\mu}  \right)  \cdot \mathbf{n}^*  + 2 |\mathbf{B}^*|^2 \right]
			\\ \nonumber
			&-\frac{1}{2} \frac{ \dt}{\dx} \sum_{\mu=1}^{Q} \omega_{\mu}  \left[ (B_1)_{i+\halfone,j}^{-,\mu} -  (B_1)_{i-\halfone,j}^{+,\mu} + (B_1)_{i+\halfone,j}^{+,\mu} - (B_1)_{i-\halfone,j}^{-,\mu}  \right] ({\bf v}^* \cdot {\bf B}^*)
			\\ \nonumber
			& -\frac{1}{2} \frac{\hat{\alpha}_2^{\rm{LF}} \dt}{\dy} \sum_{\mu=1}^{Q} \omega_{\mu}  \left[ \left( \mathbf{U}_{i,j+\halfone}^{\mu,-} + \mathbf{U}_{i,j-\halfone}^{\mu,+} + \mathbf{U}_{i,j+\halfone}^{\mu,+} + \mathbf{U}_{i,j-\halfone}^{\mu,-}  \right) \cdot \mathbf{n}^* + 2 |\mathbf{B}^*|^2 \right] 
			\\ \nonumber
			& -\frac{1}{2} \frac{ \dt}{\dy} \sum_{\mu=1}^{Q} \omega_{\mu}  \left[ (B_2)_{i,j+\halfone}^{\mu,-} -  (B_2)_{i,j-\halfone}^{\mu,+} + (B_2)_{i,j+\halfone}^{\mu,+} - (B_2)_{i,j-\halfone}^{\mu,-}  \right] ({\bf v}^* \cdot {\bf B}^*) 
			\\ \nonumber
			= &-\frac{1}{2} \frac{\hat{\alpha}_1^{\rm{LF}} \dt}{\dx} \left[ \left( {\bf \Pi}_{i+\halfone,j}^+ + \Pi_{i-\halfone,j}^- + {\bf \Pi}_{i+\halfone,j}^- + {\bf \Pi}_{i-\halfone,j}^+ \right) \cdot \mathbf{n}^* + 2 |\mathbf{B}^*|^2 \right] 
			\\ \nonumber
			&- \frac{1}{2} \frac{\hat{\alpha}_2^{\rm{LF}} \dt}{\dx} \left[ \left({\bf \Pi}_{i,j+\halfone}^+ + {\bf \Pi}_{i,j-\halfone}^- + {\bf \Pi}_{i,j+\halfone}^- + {\bf \Pi}_{i,j-\halfone}^+ \right)  \cdot \mathbf{n}^* + 2 |\mathbf{B}^*|^2 \right] 
			\\ \label{eq:PiFn}
			& - \dt ( \nabla_h \cdot \average{\mathbf{B}}_{ij} ) (\mathbf{v}^* \cdot \mathbf{B}^*),
	\end{align}
	where the ``averaged'' discrete divergence operator $\nabla_h \cdot \average{\mathbf{B}}_{ij}$ is defined by
\begin{equation}\label{Dis:DivOpe}
	\resizebox{0.92\hsize}{!}{$
		\begin{aligned}
			\nabla_h \cdot \average{\mathbf{B}}_{ij} := \sum_{\mu=1}^{Q} \omega_{\mu} \left(\frac{\average{B_{1}}_{i+\halfone,j}^{\mu} -  \average{B_{1}}_{i-\halfone,j}^{\mu}}{\dx} +\frac{\average{B_{2}}_{i,j+\halfone}^{\mu} - \average{B_{2}}_{i,j-\halfone}^{\mu}}{\dy}  \right).
		\end{aligned}
		$}
\end{equation}
Thanks to Lemma \ref{Lemma:SourT}, we have the following estimate relative to the source term
\begin{equation*}
	\resizebox{1\hsize}{!}{$
		\begin{aligned}
			- \jump{{B}_{1}}_{i+\halfone,j}^{\mu} \mathbf{S} \left( \average{\mathbf{U}}_{i+\halfone,j}^{\mu} \right) \cdot \mathbf{n}^*  
			& \geq  \jump{{B}_{1}}_{i+\halfone,j}^{\mu} (\mathbf{v}^* \cdot \mathbf{B}^*) - \frac{\Big| \jump{B_{1}}_{i+\halfone,j}^{\mu} \Big|}{\sqrt{\average{\rho}_{i+\halfone,j}^{\mu}}} \left( \average{\mathbf{U}}_{i+\halfone,j}^{\mu} \cdot \mathbf{n}^* + \frac{|\mathbf{B}^*|^2}{2} \right) \\
			& \geq \jump{{B}_{1}}_{i+\halfone,j}^{\mu} (\mathbf{v}^* \cdot \mathbf{B}^*) - 2\beta_1 \left( \average{\mathbf{U}}_{i+\halfone,j}^{\mu} \cdot \mathbf{n}^* + \frac{|\mathbf{B}^*|^2}{2} \right).
		\end{aligned}
	$}
\end{equation*} 
	Similarly, one can obtain 
	\begin{equation*}
		\resizebox{1\hsize}{!}{$
		\begin{aligned}
			- \jump{B_{2}}_{i,j+\halfone}^{\mu}  \mathbf{S} \left( \average{\mathbf{U}}_{i,j+\halfone}^{\mu} \right) \cdot \mathbf{n}^*  
			\geq \jump{{B}_{2}}_{i,j+\halfone}^{\mu} (\mathbf{v}^* \cdot \mathbf{B}^*) - 2\beta_2 \left( \average{\mathbf{U}}_{i,j+\halfone}^{\mu} \cdot \mathbf{n}^* + \frac{|\mathbf{B}^*|^2}{2} \right), 
		\end{aligned}
		$}
	\end{equation*}
	Combining these estimates with \eqref{Dis:LFSourT2}, we obtain 
	\begin{align} \nonumber
			\dt {\bf S}_{ij} \cdot \mathbf{n}^*  \geq & 
			\frac{\dt}{\dx} \sum_{\mu=1}^Q \omega_\mu 
			\left( \frac12 \jump{{B}_{1}}_{i+\halfone,j}^{\mu} + \frac12 \jump{{B}_{1}}_{i-\halfone,j}^{\mu}  \right) (\mathbf{v}^* \cdot \mathbf{B}^*) \\ \nonumber
			& - \frac{\beta_1 \dt}{\dx} \sum_{\mu=1}^{Q} \left[ \left( \average{\mathbf{U}}_{i+\halfone,j}^{\mu} + \average{\mathbf{U}}_{i-\halfone,j}^{\mu} \right) \cdot \mathbf{n}^* + |\mathbf{B}^*|^2 \right] 
			\\ \nonumber
			& 
			+\frac{\dt}{ \dy} \sum_{\mu=1}^Q \omega_\mu 
			\left( \frac12 \jump{{B}_{2}}_{i,j+\halfone}^{\mu} + \frac12 \jump{{B}_{2}}_{i,j-\halfone}^{\mu}  \right) (\mathbf{v}^* \cdot \mathbf{B}^*)
			\\ \nonumber
			& - \frac{\beta_2 \dt}{\dy} \sum_{\mu=1}^{Q} \left[ \left( \average{\mathbf{U}}_{i,j+\halfone}^{\mu} + \average{\mathbf{U}}_{i,j-\halfone}^{\mu} \right) \cdot \mathbf{n}^* + |\mathbf{B}^*|^2 \right] 
			\\ \nonumber
			 = &   - \frac{\beta_1 \dt}{2\dx} \left[ \left( {\bf \Pi}_{i+\halfone,j}^+ + {\bf \Pi}_{i-\halfone,j}^- + {\bf \Pi}_{i+\halfone,j}^- + {\bf \Pi}_{i-\halfone,j}^+ \right) \cdot \mathbf{n}^* + 2 |\mathbf{B}^*|^2 \right] 
			\\  \nonumber
			&-   \frac{\beta_2 \dt}{2\dy} \left[ \left( {\bf \Pi}_{i,j+\halfone}^+ + {\bf \Pi}_{i,j-\halfone}^- + {\bf \Pi}_{i,j+\halfone}^- + {\bf \Pi}_{i,j-\halfone}^+ \right)  \cdot \mathbf{n}^* + 2 |\mathbf{B}^*|^2 \right] 
			\\ \label{LF:EstSourT}
			&+ \frac{\dt}{2}  \widehat{\jump{\mathbf{B}}}_{ij} (\mathbf{v}^* \cdot \mathbf{B}^*),  
	\end{align}
	where
$
		\widehat{\jump{\mathbf{B}}}_{ij} := 
		\sum_{\mu=1}^{Q} \omega_{\mu} \left( 
		\frac{ \jump{{B}_{1}}_{i+\halfone,j}^{\mu} + \jump{{B}_{1}}_{i-\halfone,j}^{\mu}}{\dx} 
		+ \frac{\jump{{B}_{2}}_{i,j+\halfone}^{\mu} + \jump{{B}_{2}}_{i,j-\halfone}^{\mu}}{\dy}
		\right).
$ 
	Substituting the estimates in \eqref{eq:336}, \eqref{eq:PiFn}, and \eqref{LF:EstSourT}  
	into 
	 \eqref{LF:UpdateRe2}, we obtain 
	\begin{align*}
			\overline{\mathbf{U}}_{ij}^{\dt}& \cdot \mathbf{n}^* + \frac{|\mathbf{B}^*|^2}{2}  
			\\
			 \ge  &
			  \frac{1}{2} \left( \lambda_1 - (\hat{\alpha}_1^{\rm{LF}} + \beta_1 )\frac{\dt}{\dx}  \right) \left[ \left( {\bf \Pi}_{i+\halfone,j}^+ + {\bf \Pi}_{i-\halfone,j}^- \right) \cdot \mathbf{n}^* + |\mathbf{B}^*|^2 \right] \\
			& + \frac{1}{2} \left( \lambda_2 - (\hat{\alpha}_2^{\rm{LF}} + \beta_2 ) \frac{\dt}{\dy}  \right) \left[ \left( {\bf \Pi}_{i,j+\halfone}^+ + {\bf \Pi}_{i,j-\halfone}^- \right) \cdot \mathbf{n}^* + |\mathbf{B}^*|^2 \right] \\
			& + \frac{1}{2} \left( \frac{2\widehat{\omega}_1 \lambda_1}{\lambda}  -  \left( \lambda_1 + (\hat{\alpha}_1^{\rm{LF}} + \beta_1 )\frac{\dt}{\dx} \right)   \right)  \left[ \left( {\bf \Pi}_{i+\halfone,j}^- + {\bf \Pi}_{i-\halfone,j}^+  \right) \cdot \mathbf{n}^* + |\mathbf{B}^*|^2 \right] \\
			& + \frac{1}{2} \left( \frac{2\widehat{\omega}_1 \lambda_2}{\lambda}  - \left( \lambda_2 + (\hat{\alpha}_2^{\rm{LF}} + \beta_2 )\frac{\dt}{\dy} \right)  \right)  \left[ \left( {\bf \Pi}_{i,j+\halfone}^- + {\bf \Pi}_{i,j-\halfone}^+  \right) \cdot \mathbf{n}^* + |\mathbf{B}^*|^2 \right] 
			\\
			& + \dt (\mathbf{v}^* \cdot \mathbf{B}^*) \left( \frac{1}{2} \widehat{\jump{\mathbf{B}}}_{ij} -  \nabla_h \cdot \average{\mathbf{B}}_{ij} \right)
			\\
			 \overset{\eqref{LF:CFLCond}}{>} &  \dt (\mathbf{v}^* \cdot \mathbf{B}^*) \left( \frac{1}{2} \widehat{\jump{\mathbf{B}}}_{ij} -  \nabla_h \cdot \average{\mathbf{B}}_{ij} \right) \\
			= & \dt (\mathbf{v}^* \cdot \mathbf{B}^*) \left( - \nabla_h \cdot \mathbf{B}_{ij} \right) 
			\overset{\eqref{eq:DDF}}{=} 0,
	\end{align*}
	where we have used the CFL condition \eqref{LF:CFLCond} which ensures that 
	$\lambda_1 \geq  (\hat{\alpha}_1^{\rm{LF}} + \beta_1 )\frac{\dt}{\dx}$,
	$\lambda_2 \geq (\hat{\alpha}_2^{\rm{LF}} + \beta_2 ) \frac{\dt}{\dx}$,
	$\frac{2\widehat{\omega}_1 \lambda_1}{\lambda}  > 2\lambda_1 \geq  \lambda_1 + (\hat{\alpha}_1^{\rm{LF}} + \beta_1 )\frac{\dt}{\dx}$, 
	and
	$\frac{2\widehat{\omega}_2 \lambda_2}{\lambda}  > 2\lambda_2 \geq  \lambda_2 + (\hat{\alpha}_2^{\rm{LF}} + \beta_2 )\frac{\dt}{\dx}$. 
	In summary, we have $\overline{\mathbf{U}}_{ij}^{\dt} \in G_* = G$. 
	The proof is completed.
\end{proof}

%\begin{remark}
%	The above Theorem focus on the first-order forward Euler time discretization.
%	In addition, for the strong-stability-preserving (SSP) high-order time discretization method \cite{Gottlieb2009}, such as the third Runge-Kutta method \eqref{RKTime}, which is a convex combination of the forward Euler scheme, thus the PP property remains valid, i.e., the cell averages staying in $G$ during the updating process.
%\end{remark}

\section{Numerical experiments}\label{Numericalexperiments}
In this section, we give several benchmark or challenging numerical examples to validate the robustness, effectiveness, and accuracy of our DDFPP finite volume schemes. 
We focus on 
the second-order and fifth-order DDFPP schemes with the third-order SSP Runge–Kutta time discretization \eqref{RKTime}. 
In all our tests, we use the ideal EOS $e=p/(\rho(\gamma-1))$ and set the CFL number as $0.3$. 

\begin{expl}[Vortex problem with low pressure]\label{Ex:Vortex}
	We first consider a smooth vortex problem \cite{Christlieb,WuShu2018} to examine the accuracy of the proposed DDFPP schemes. The initial condition is chosen as $(\rho, \mathbf{v}, \mathbf{B}, p) = (1,1+\delta v_1, 1+\delta v_2, 0, \delta B_1, \delta B_2, 0, 1+\delta p)$ with the perturbations
	$
		(\delta v_1, \delta v_2) = \frac{\mu}{\sqrt{2\pi}} e^{0.5(1-r^2)}(-y,x)$,  
		$(\delta B_1, \delta B_2) = \frac{\mu}{2\pi} e^{0.5(1-r^2)}(-y,x)$,  
		$\delta p = -\frac{\mu^2(1+r^2)}{8 \pi^2} e^{1-r^2}$, 
	where  $r^2= {x}^2 + {y}^2$ and the vortex strength $\mu=5.389489439$. The lowest thermal pressure is about $5.3\times 10^{-12}$ in the vortex center, so that our PP limiting procedure is necessary for this example.  
	In our test, the computational domain is taken as $[-10,10]^2$, the boundary conditions are  periodic, and the adiabatic index is $\gamma=5/3$. We simulate this problem until $t=0.05$.	 
	\Cref{tab:Ex-vortex1} lists the $l^1$ errors and the corresponding convergence rates in the velocity, magnetic field, and pressure for our second-order and fifth-order DDFPP schemes at different grid resolutions. The results indicate that the expected convergence rates are achieved, verifying that our DDF projection and PP limiter do not affect the accuracy. 

		\begin{table}[htbp] 
		\centering
		\caption{ \Cref{Ex:Vortex}: $l^1$ errors at $t=0.05$ and the corresponding convergence rates for our second-order and fifth-order DDFPP schemes with  $ \dx = \dy = 20/N $. 
		}
		\label{tab:Ex-vortex1}
		\setlength{\tabcolsep}{2mm}{
			\begin{tabular}{cllclclc}
				%{\hsize}{@{}@{\extracolsep{\fill}}cllclclclc@{}}
				\toprule[1.5pt]
				\multirow{2}{*}{ Method } &
				\multirow{2}{*}{$ N$} &
				\multicolumn{2}{c}{$v_2$} &
				\multicolumn{2}{c}{$B_1$} & 
				\multicolumn{2}{c}{$p$} \\
				\cmidrule(r){3-4} \cmidrule(r){5-6} \cmidrule(l){7-8}
				&
				& $ l^{1} $ error & order & $ l^{1} $ error & order & $ l^{1} $ error & order \\
				
				\midrule[1.5pt]
				
				\multirow{7}{*}{ 2nd-order }
				
				&20  & 1.79e{-}4&{--} & 1.19e{-}4&{--} & 1.60e{-}4&{--} \\
				&40  & 5.29e{-}5&1.75 & 3.62e{-}5&1.72 & 6.27e{-}5&1.35 \\
				&80  & 1.60e{-}5&1.72 & 1.06e{-}5&1.77 & 2.27e{-}5&1.46 \\
				&160 & 4.04e{-}6&1.99 & 2.82e{-}6&1.91 & 5.84e{-}6&1.96 \\
				&320 & 9.04e{-}7&2.16 & 6.32e{-}7&2.16 & 1.22e{-}6&2.26 \\
				&640 & 1.54e{-}7&2.55 & 1.07e{-}7&2.57 & 2.17e{-}7&2.49 \\
				&1280& 2.64e{-}8&2.54 & 1.77e{-}8&2.59 & 4.07e{-}8&2.42 \\

				\midrule[1.5pt]
				
				\multirow{7}{*}{ 5th-order }
				& 20  & 1.06e{-}3 &--   & 7.21e{-}4 &--   & 9.91e{-}4 &--   \\
				& 40  & 1.57e{-}4 &2.76 & 9.10e{-}5 &2.99 & 1.26e{-}4 &2.97 \\
				& 80  & 1.80e{-}5 &3.13 & 1.01e{-}5 &3.17 & 1.61e{-}5 &2.97 \\
				& 160 & 9.12e{-}7 &4.30 & 4.49e{-}7 &4.49 & 6.36e{-}7 &4.66 \\
				& 320 & 3.11e{-}8 &4.87 & 1.51e{-}8 &4.90 & 1.89e{-}8 &5.08 \\
				& 640 & 8.10e{-}10&5.26 & 4.61e{-}10&5.03 & 5.68e{-}10&5.05 \\
				& 1280& 2.50e{-}11&5.01 & 1.61e{-}11&4.84 & 2.04e{-}11&4.80 \\

				\bottomrule[1.5pt]
			\end{tabular}
		}
	\end{table}

\end{expl}

\begin{expl}[Orszag--Tang problem]\label{Ex:OT}
	This is a benchmark test \cite{jiang1999high,WuJiangShu2022}, and it is performed to verify the effectiveness, DDF property, and high resolution of our schemes. The initial state is taken as 
	\begin{equation*}
		(\rho, \mathbf{v}, \mathbf{B}, p) =  (\gamma^2, -\sin y, \sin x, 0, -\sin y, \sin (2x), 0, \gamma)
	\end{equation*}
	with $\gamma = 5/3$. The computational domain is $[0,2\pi]^2$ and divided into $200\times 200$ uniform cells with periodic boundary conditions.  %\Cref{fig:Ex-OT} presents the contours of $\rho$ at $t=0.5$, $2$, and $3$ obtained by our second-order and fifth-order DDFPP schemes. We observe that the complicated structures involving multiple shocks are correctly captured by our methods, and the results are comparable to those reported in \cite{jiang1999high,WuShu2018,WuJiangShu2022}.  
	%In order to study the importance of the proposed DDF projection technique, 
	 \Cref{fig:Ex-OT2} compares the results obtained by the fifth-order finite volume schemes with and without DDF projection, respectively. We observe that the solution computed without DDF projection has obvious oscillations and indicates very severe numerical instability.  This validates the necessity of the proposed DDF projection technique. 
	For further comparison, we also 
	evaluate the discrete divergence error
	$
	\varepsilon_{div} := \max_{i,j} \left| \nabla_h \cdot {\bf B}_{ij} \right| 
	$
	and present its evolution in \Cref{fig:Ex-OT-gdivB}. As we can see, the discrete divergence error grows very fast if the DDF projection is not used, while using the DDF projection can enforce the discrete divergence error $\varepsilon_{div}$ at the level of round-off error. 
	It is worth mentioning that if we do not use the DDF projection and PP limiter, 
	we observe that the fifth-order WENO code would break down at $t=3.11$ due to nonphysical solution. 
	This further confirms the importance of DDF projection and PP limiter for robust simulation.

%	
%	\begin{figure}[htbp]
%		\centering
%		\begin{subfigure}{0.32\textwidth}
%			\includegraphics[width=\textwidth]{}
%			%			\caption{ $t=0.5$ for 2rd-order method.}
%		\end{subfigure}
%		\hfill
%		\begin{subfigure}{0.32\textwidth}
%			\includegraphics[width=\textwidth]{}
%			%			\caption{ $t=2$ for 2rd-order method.}
%		\end{subfigure}
%		\hfill
%		\begin{subfigure}{0.32\textwidth}
%			\includegraphics[width=\textwidth]{}
%			%			\caption{ $t=3$ for 2rd-order method.}
%		\end{subfigure}
%		
%		
%		\begin{subfigure}{0.32\textwidth}
%			\includegraphics[width=\textwidth]{}
%			%			\caption{ $t=0.5$ for 5th-order method.}
%		\end{subfigure}
%		\hfill
%		\begin{subfigure}{0.32\textwidth}
%			\includegraphics[width=\textwidth]{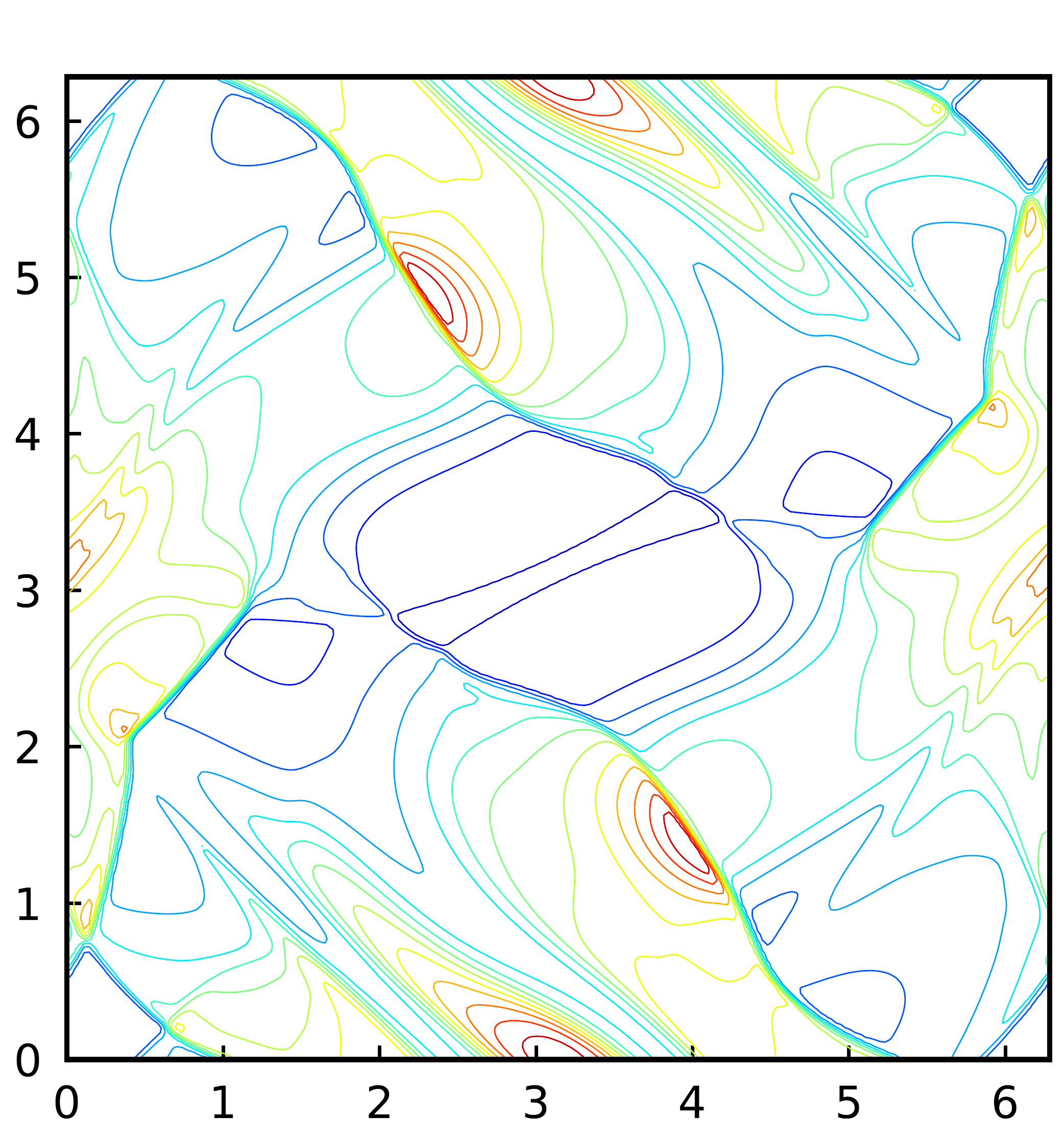}
%			%			\caption{ $t=2$ for 5th-order method.}
%		\end{subfigure}
%		\hfill
%		\begin{subfigure}{0.32\textwidth}
%			\includegraphics[width=\textwidth]{}
%			%			\caption{ $t=3$ for 5th-order method.}
%		\end{subfigure}
%		
%		\caption{ \Cref{Ex:OT}: The density contours at $t=0.5, 2$, and $3$ (from left to right). Fifteen equally spaced contour lines: from 2.10 to 5.83 for $t=0.5$; from 0.62 to 6.17 for $t=2$; from 1.08 to 6.17 for $t=3$. Top: second-order DDFPP scheme. Bottom: fifth-order DDFPP scheme.
%		}
%		\label{fig:Ex-OT}
%	\end{figure} 

	\begin{figure}[htbp]
	\centering
		\begin{subfigure}{0.32\textwidth}
				\includegraphics[width=\textwidth]{result/Orszag-Tang/5order-density-t2.pdf}
				\caption{With DDF at $t=2$}
			\end{subfigure}
		\hfill	
	\begin{subfigure}{0.32\textwidth}
		\centering
		\includegraphics[width=\textwidth]{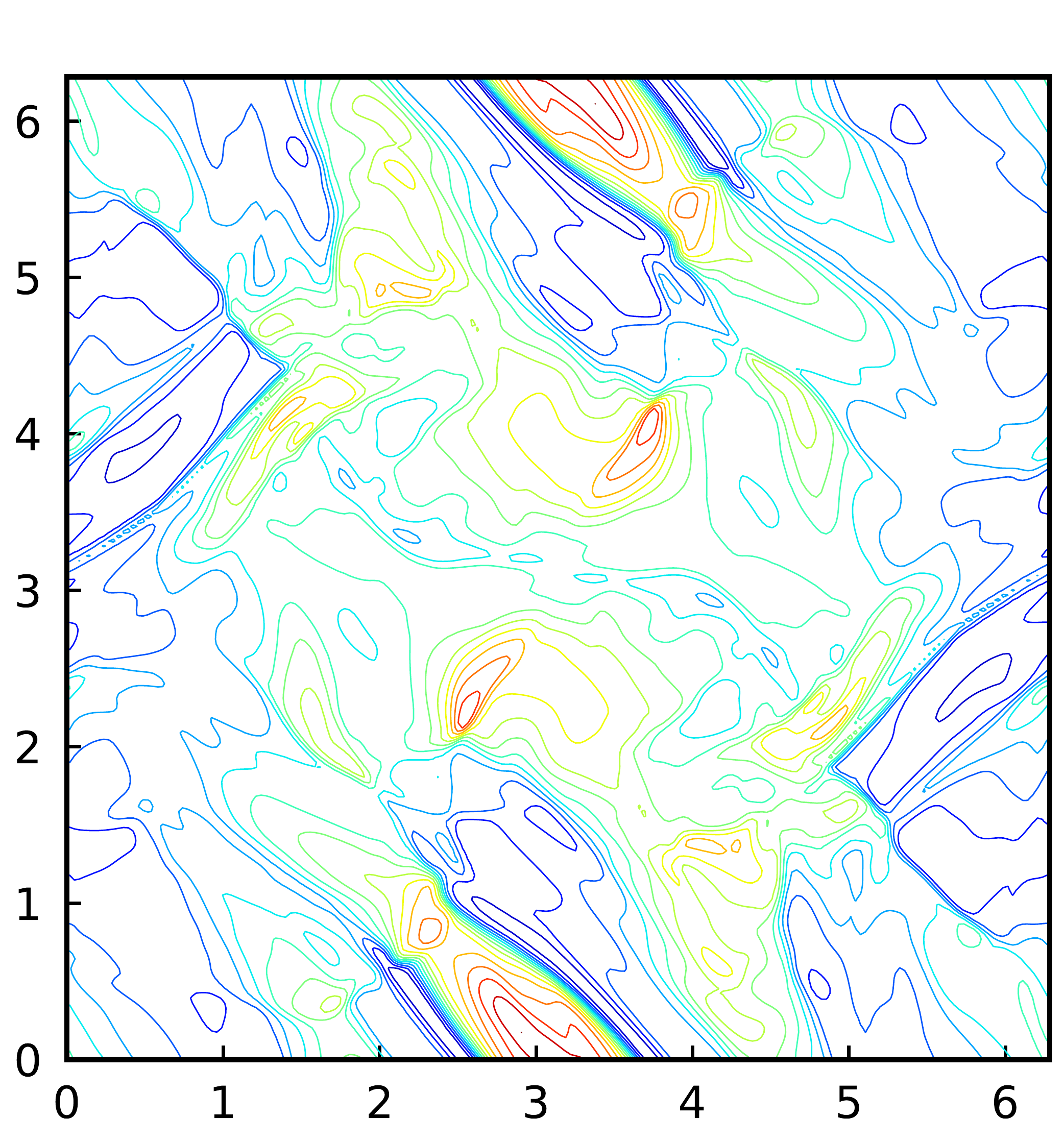}
					\caption{With DDF at $t=4$}
	\end{subfigure}
	\hfill
	\begin{subfigure}{0.32\textwidth}
		\centering
		\includegraphics[width=\textwidth]{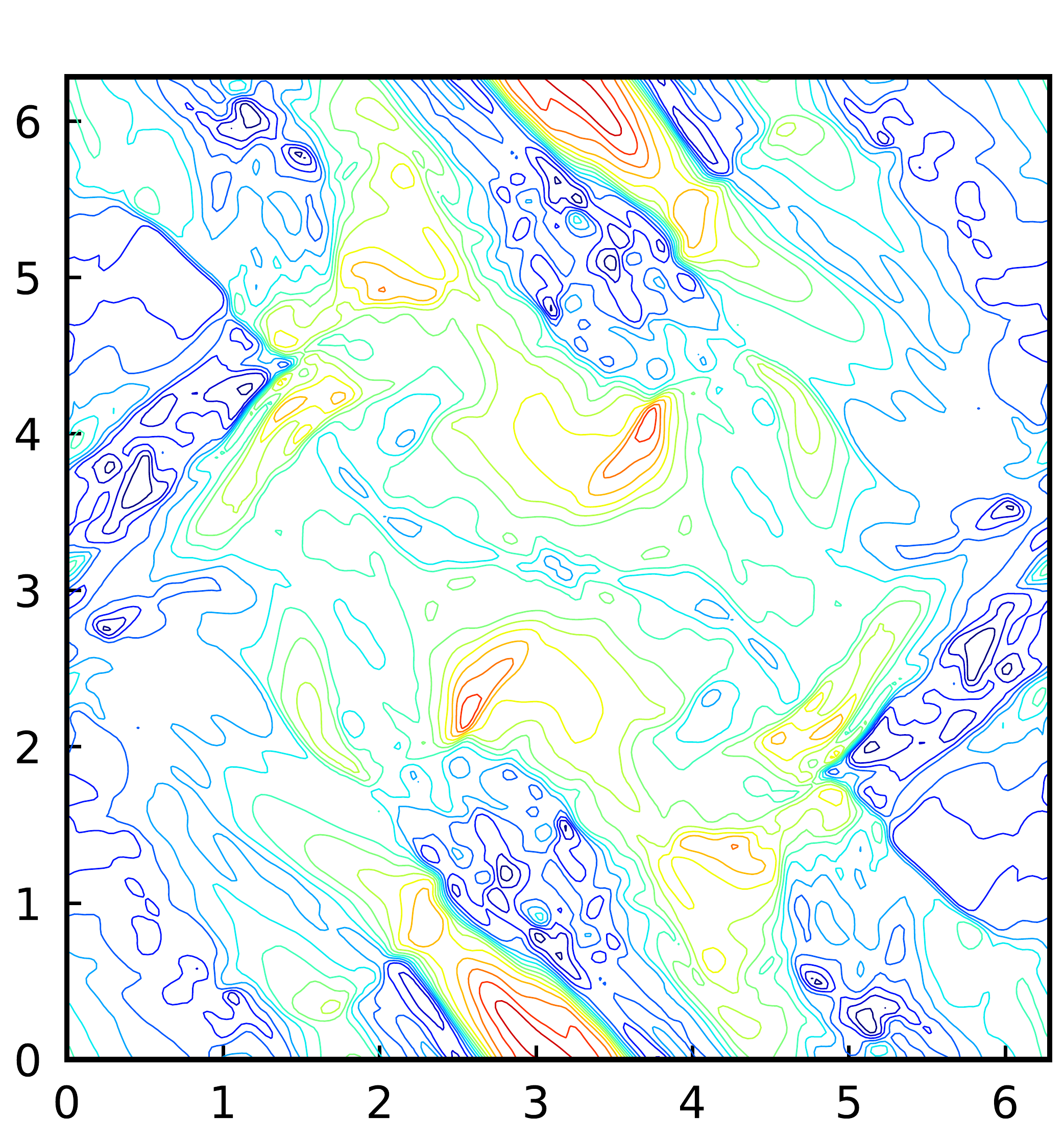}
					\caption{Without DDF at $t=4$}
	\end{subfigure}
	
	\caption{ \Cref{Ex:OT}: Density contours computed with or without DDF projection. %Fifteen equally spaced contour lines from 1.02 to 5.88.
	}
	\label{fig:Ex-OT2}
\end{figure}

\end{expl}

\begin{figure}[htbp]
	\centering
	\begin{subfigure}{0.32\textwidth}
		\includegraphics[width=\textwidth]{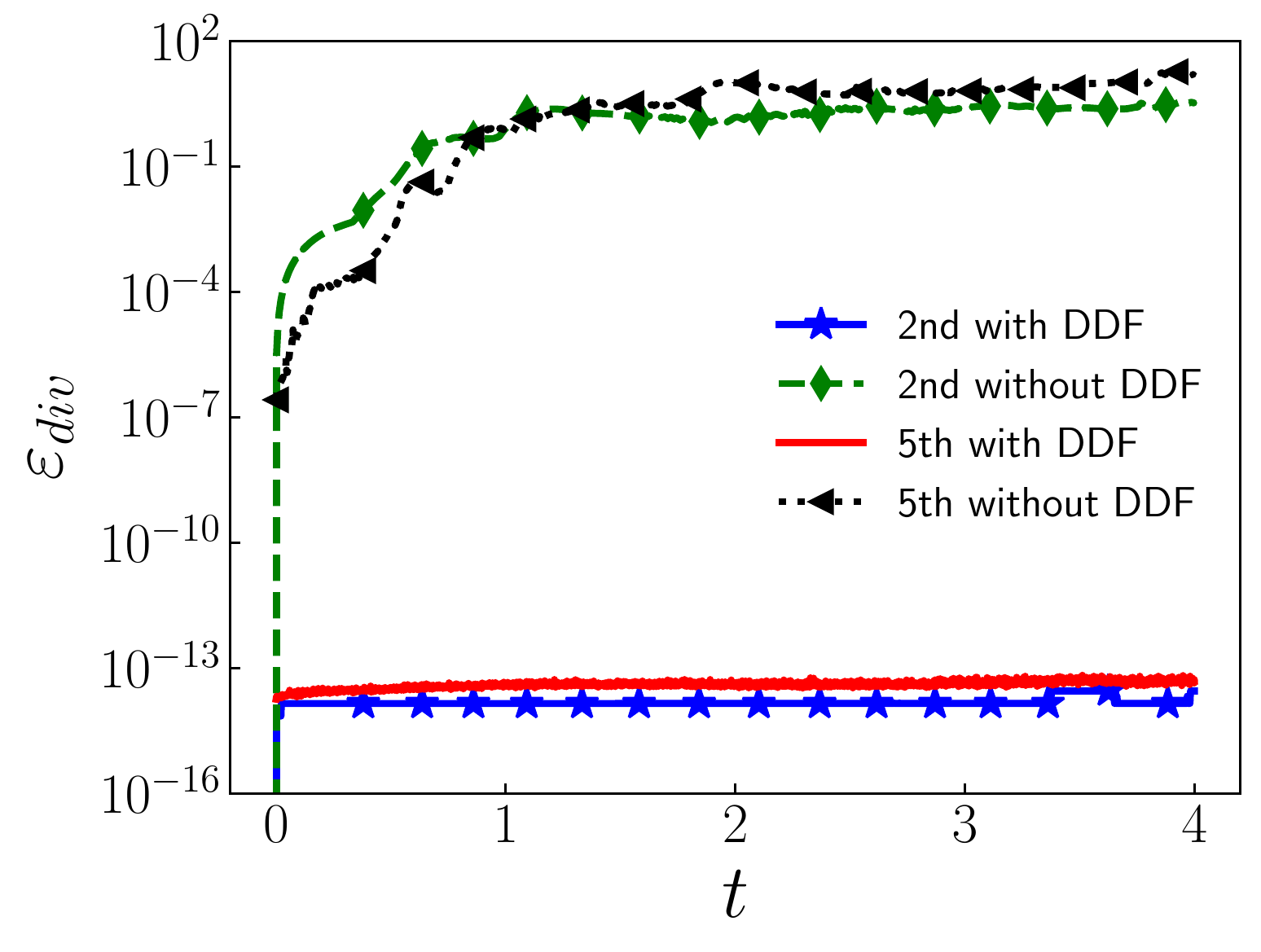}
		\caption{ \Cref{Ex:OT}}
		\label{fig:Ex-OT-gdivB}
	\end{subfigure}
	\hfill
	\begin{subfigure}{0.32\textwidth}
		\includegraphics[width=\textwidth]{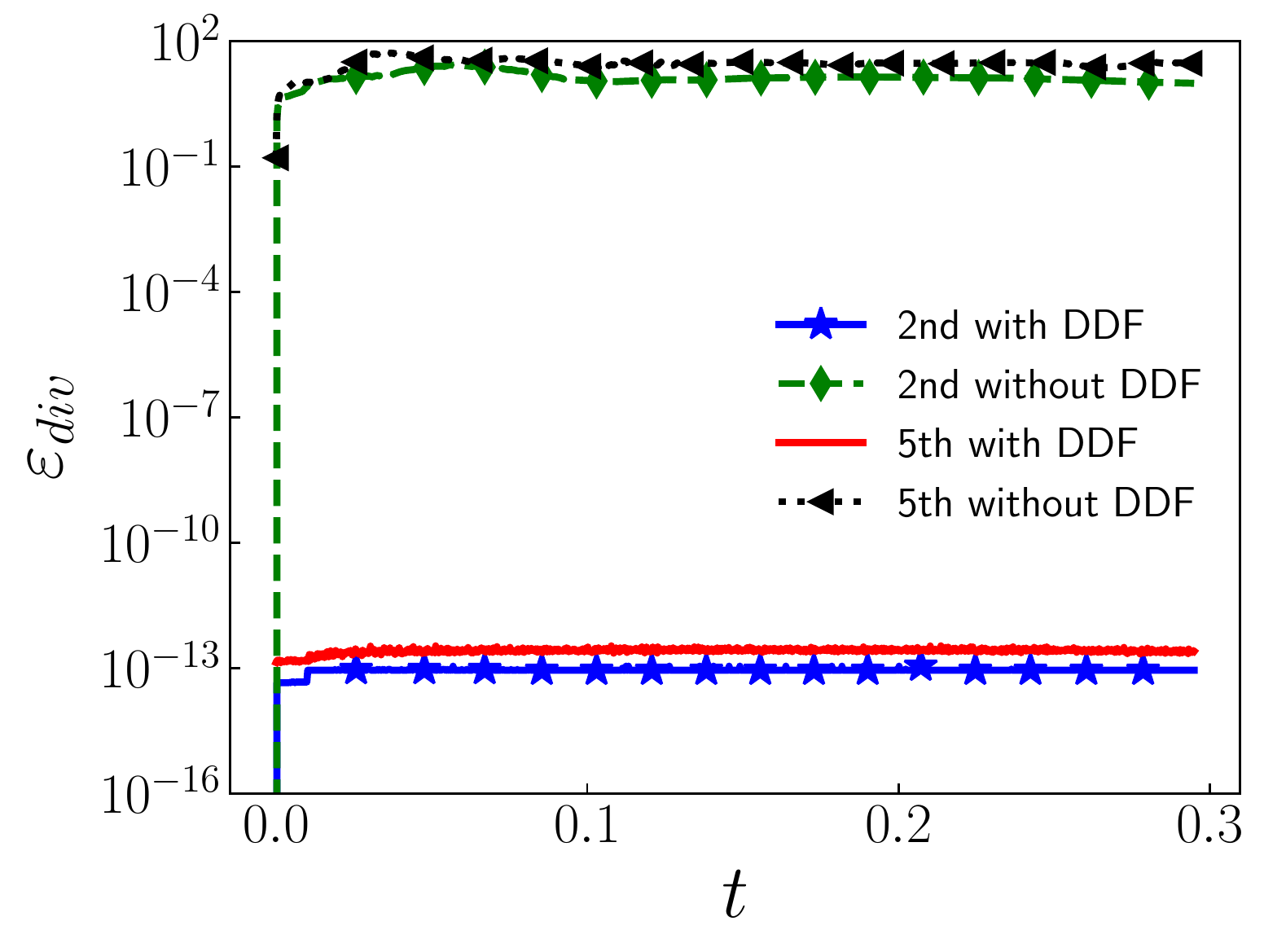}
		\caption{ \Cref{Ex:Rotor}}
		\label{fig:Ex-Rotor-gdivB}
	\end{subfigure}
	\hfill
	\begin{subfigure}{0.32\textwidth}
		\includegraphics[width=\textwidth]{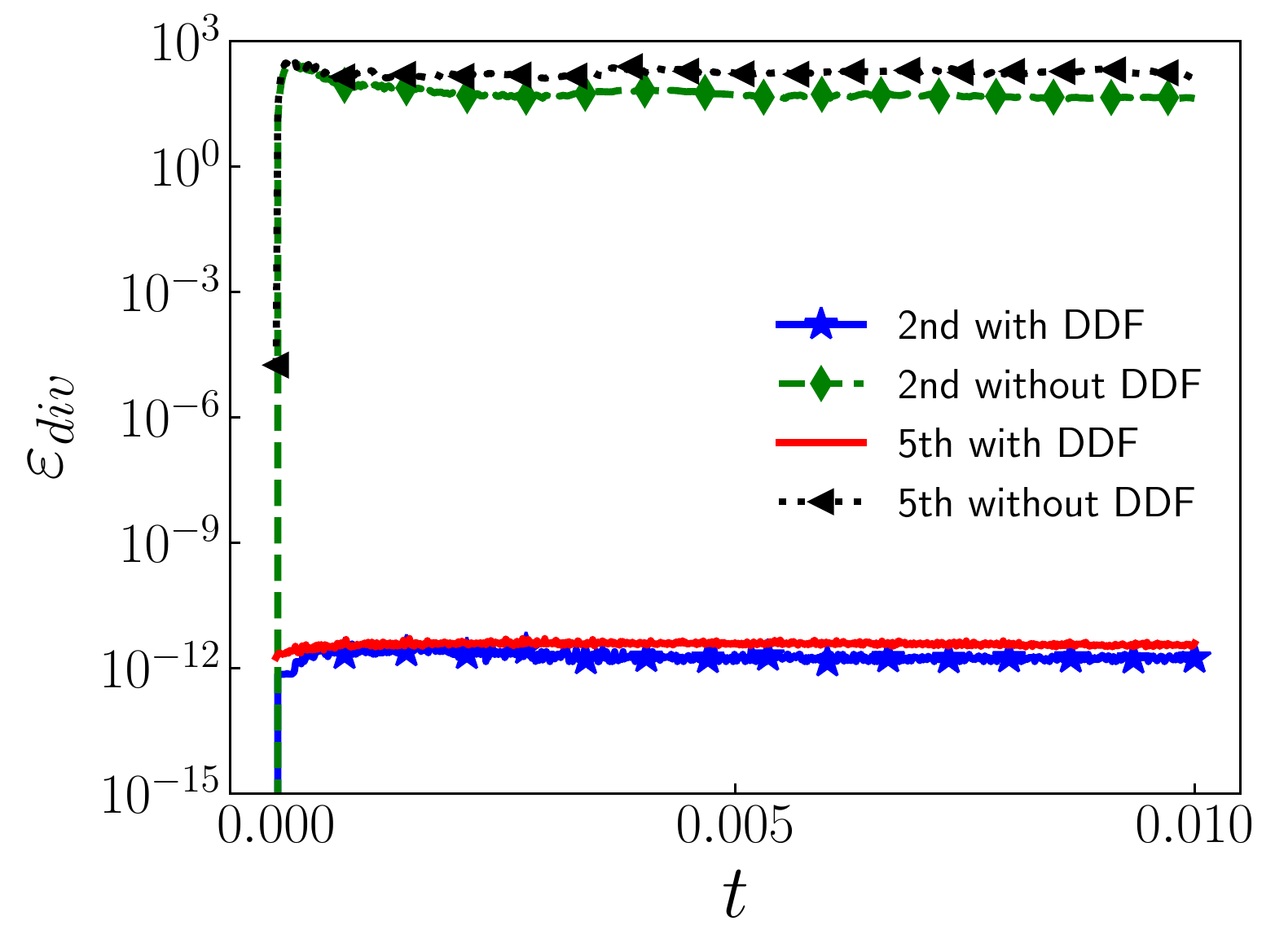}
		\caption{ \Cref{Ex:Blast}}
		\label{fig:Ex-BL-gdivB}
	\end{subfigure}
	
	\caption{The time evolution of the divergence errors $\varepsilon_{div}:= \max_{i,j} \left| \nabla_h \cdot {\bf B}_{ij} \right| $.
	}
	\label{fig:Ex-OT-Rotor-Blast}
\end{figure}

\begin{expl}[Rotor problem]\label{Ex:Rotor}
	In this example, we simulate the benchmark rotor problem, which describes a dense fluid disk rotating in an ambient fluid with $\gamma=5/3$. The initial conditions are set as   
	{\small \begin{equation*}
		(\rho, \mathbf{v}, \mathbf{B}, p) = 
		\begin{cases}
			(10, -(y-0.5)/r_1, (x-0.5)/r_1, 0, 2.5/\sqrt{4\pi}, 0, 0, 0.5), &  r \leq r_1, \\
			(1+9 \phi, -\phi(y-0.5)/r, \phi(x-0.5)/r, 0, 2.5/\sqrt{4\pi}, 0, 0, 0.5), &  r_1< r \leq r_2, \\
			(1,0,0,0, 2.5/\sqrt{4\pi}, 0, 0, 0.5 ), &  r_2<r, \\
		\end{cases}
	\end{equation*}}
	where $r=\sqrt{(x-0.5)^2+(y-0.5)^2}$, $r_1=0.1$, $r_2=0.115$, and $\phi=(r_2-r)/(r_2-r_0)$. The computational domain is taken as $[0,1]^2$ with outflow boundary conditions. The simulation is performed until $t=0.295$ on the uniform mesh of $400\times 400$ cells.  
	 \Cref{fig:Ex-Rotor} displays the numerical results obtained by the proposed second-order and fifth-order DDFPP schemes. 
	 We see that our results agree well with those reported in \cite{WuJiangShu2022}. 
	 The time evolution of the discrete divergence error $\varepsilon_{div}$ is shown \Cref{fig:Ex-Rotor-gdivB}. 
	One can observe that the divergence errors are kept at about $10^{-13}$ if the DDF projection is applied, while the errors grow quickly if the DDF projection is not used.  
	 A further comparison is shown in \Cref{fig:Ex-Ro2} for the fifth-order finite volume WENO schemes with and without DDF projection.  One can see the numerical instability and nonphysical structures from the solution computed without DDF projection, while the waves are correctly resolved when the DDF projection is used. 
	
	\begin{figure}[htbp]
		\centering
		\begin{subfigure}{0.32\textwidth}
			\includegraphics[width=\textwidth]{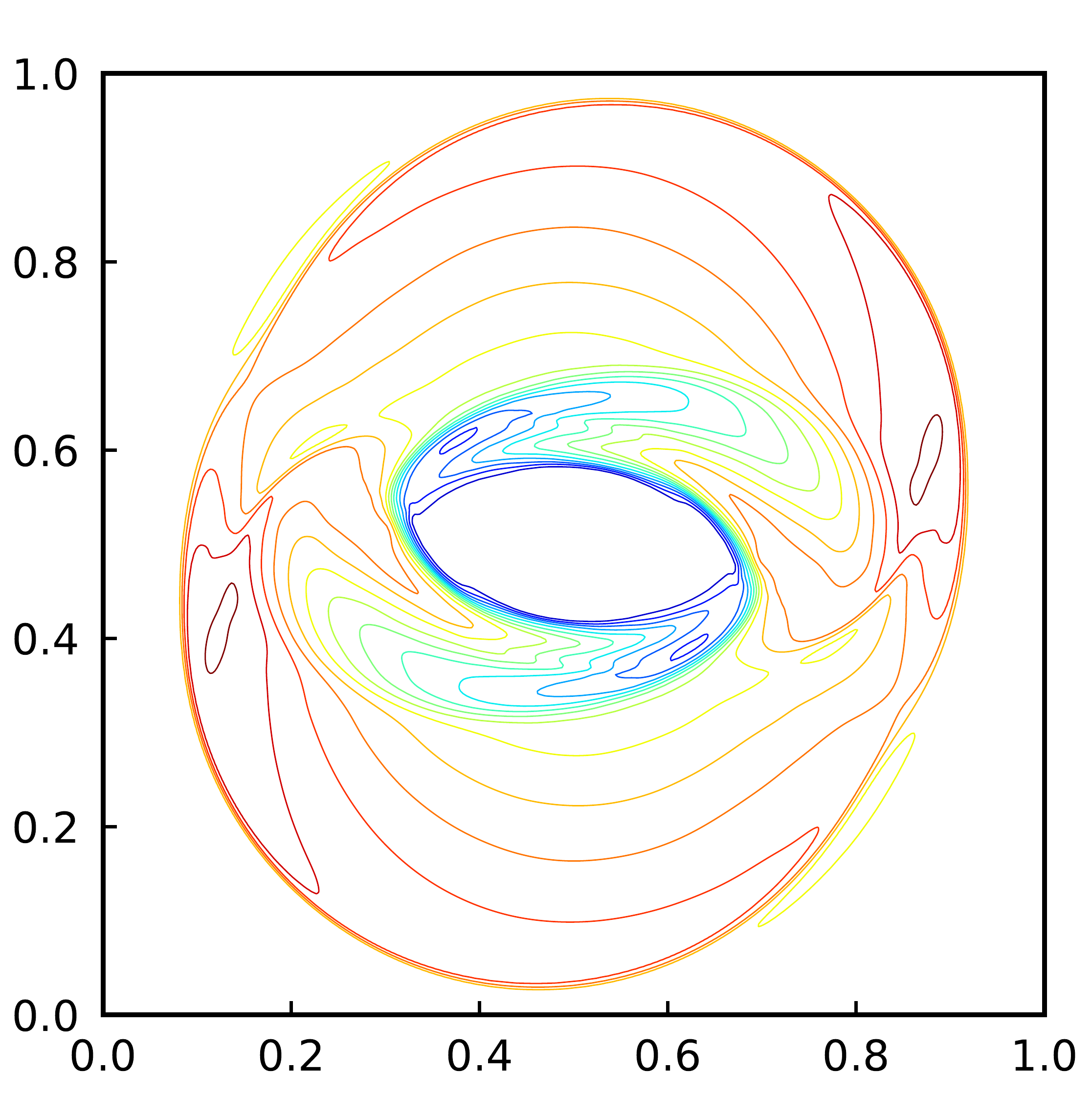}
			%			\caption{ thermal pressure for 2rd-order method.}
		\end{subfigure}
		\hfill
		\begin{subfigure}{0.32\textwidth}
			\includegraphics[width=\textwidth]{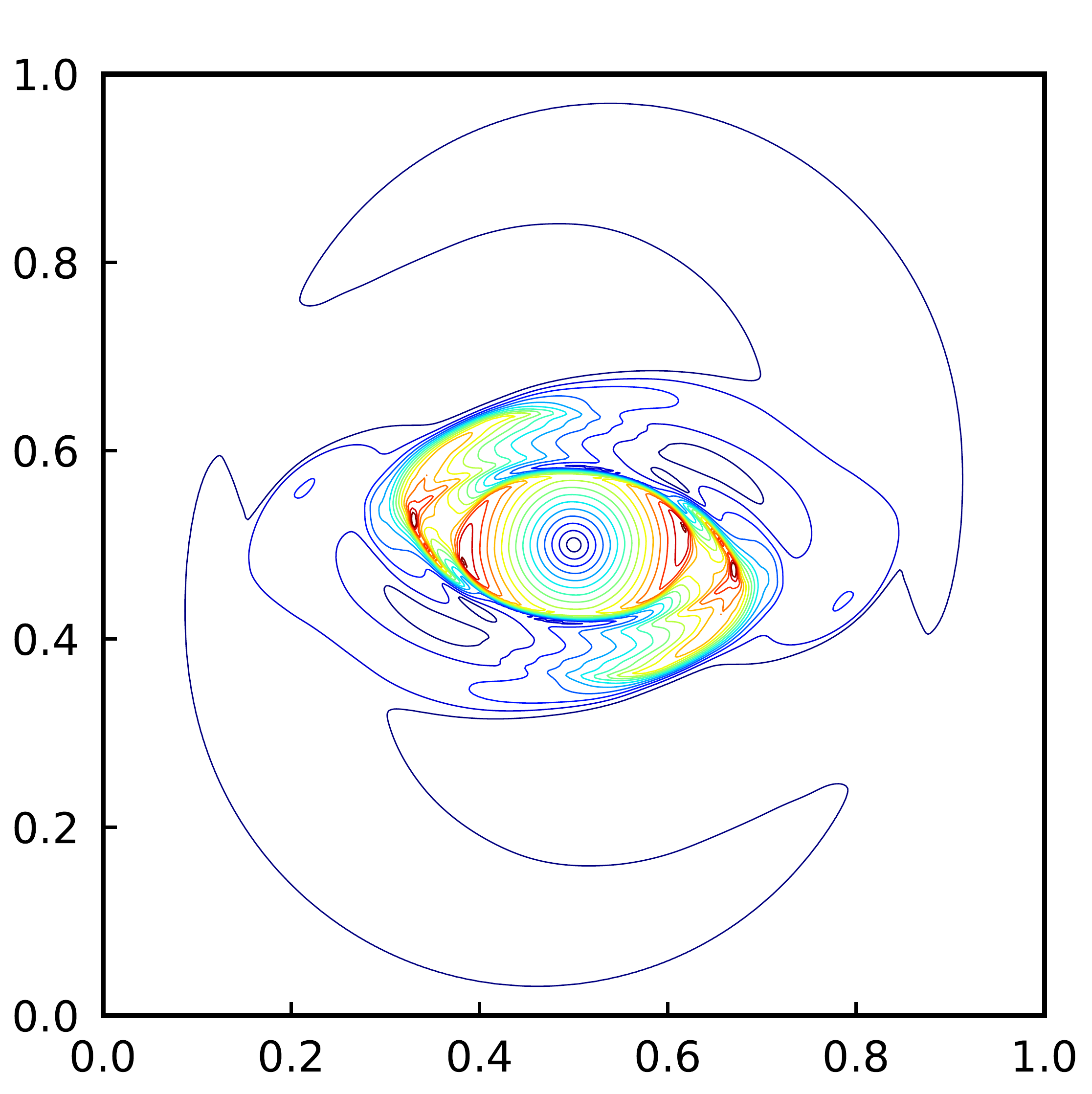}
			%			\caption{ Mach number for 2rd-order method.}
		\end{subfigure}
		\hfill
		\begin{subfigure}{0.32\textwidth}
			\includegraphics[width=\textwidth]{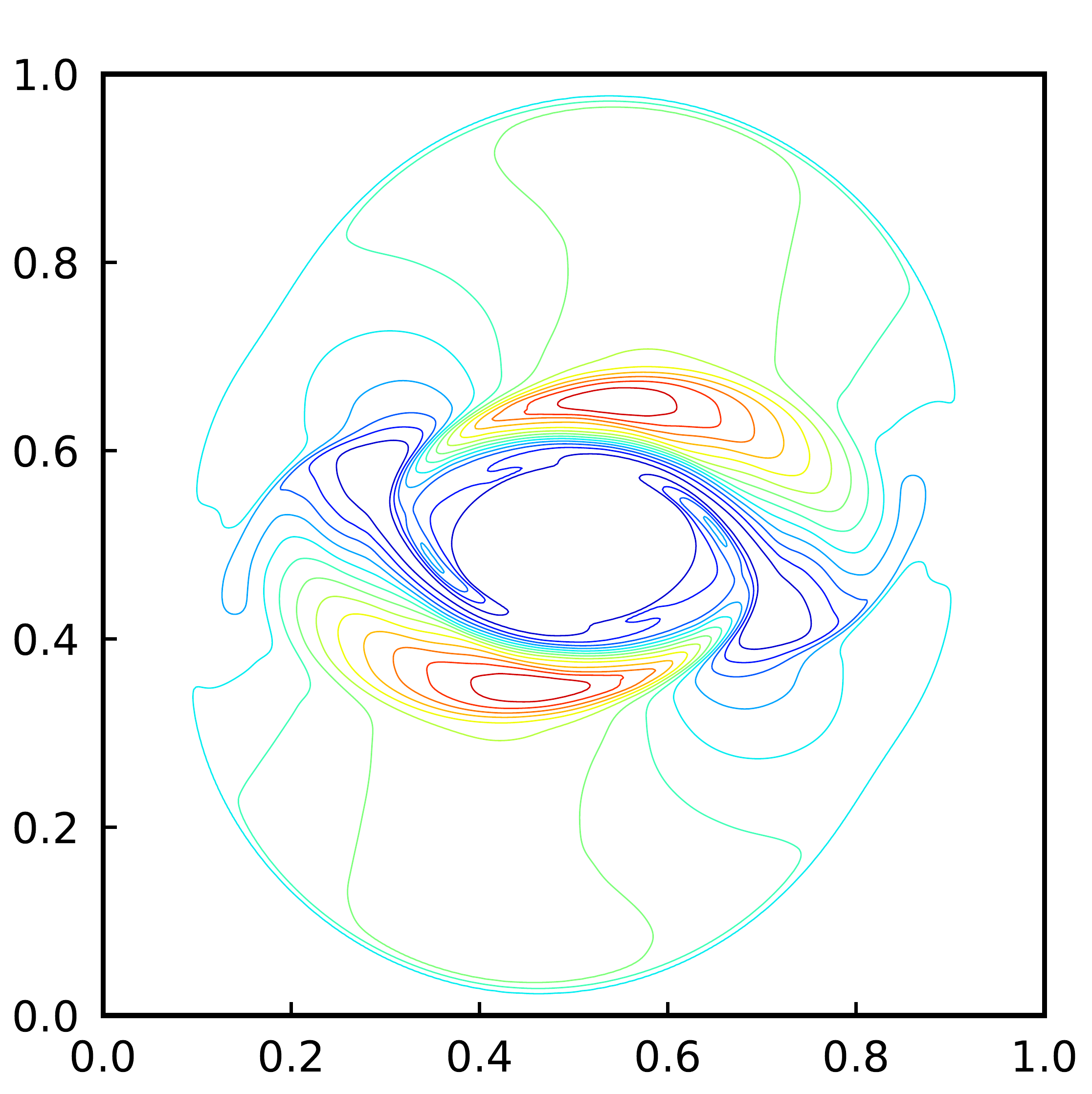}
			%			\caption{ magnetic pressure for 2rd-order method.}
		\end{subfigure}

		\begin{subfigure}{0.32\textwidth}
			\includegraphics[width=\textwidth]{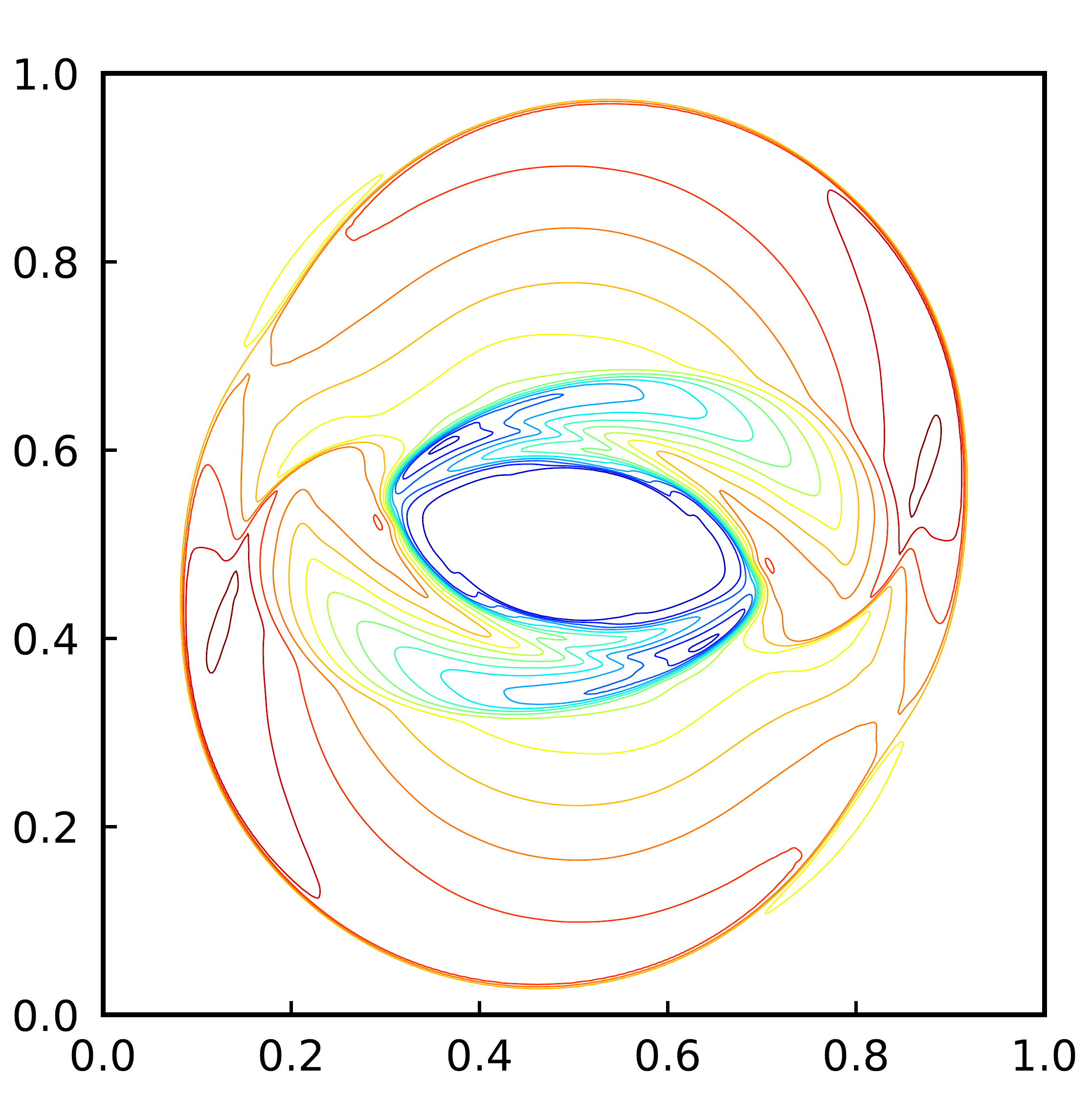}
			%			\caption{thermal pressure for 5th-order method.}
		\end{subfigure}
		\hfill
		\begin{subfigure}{0.32\textwidth}
			\includegraphics[width=\textwidth]{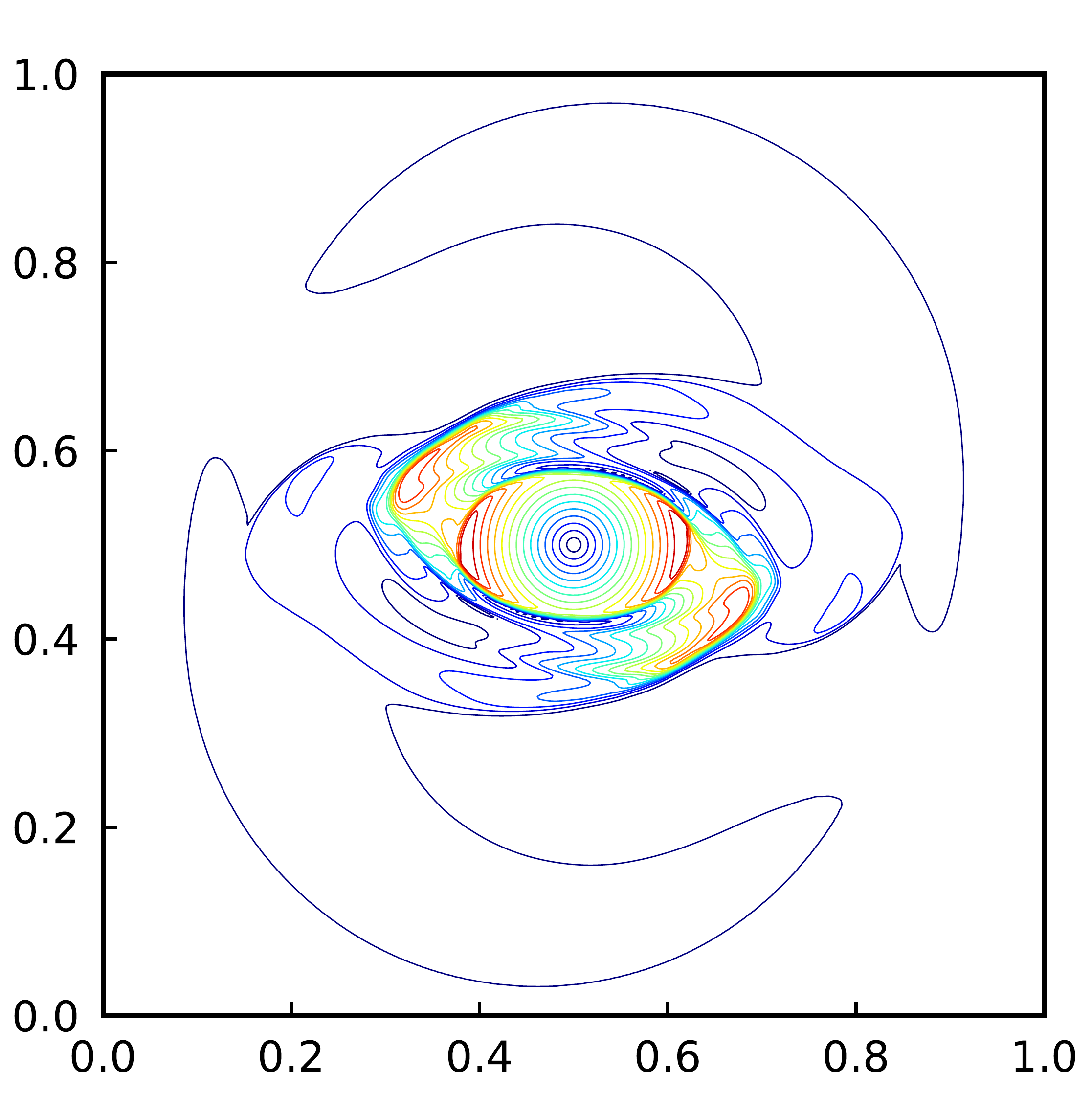}
			%			\caption{Mach number for 5th-order method.}
		\end{subfigure}
		\hfill
		\begin{subfigure}{0.32\textwidth}
			\includegraphics[width=\textwidth]{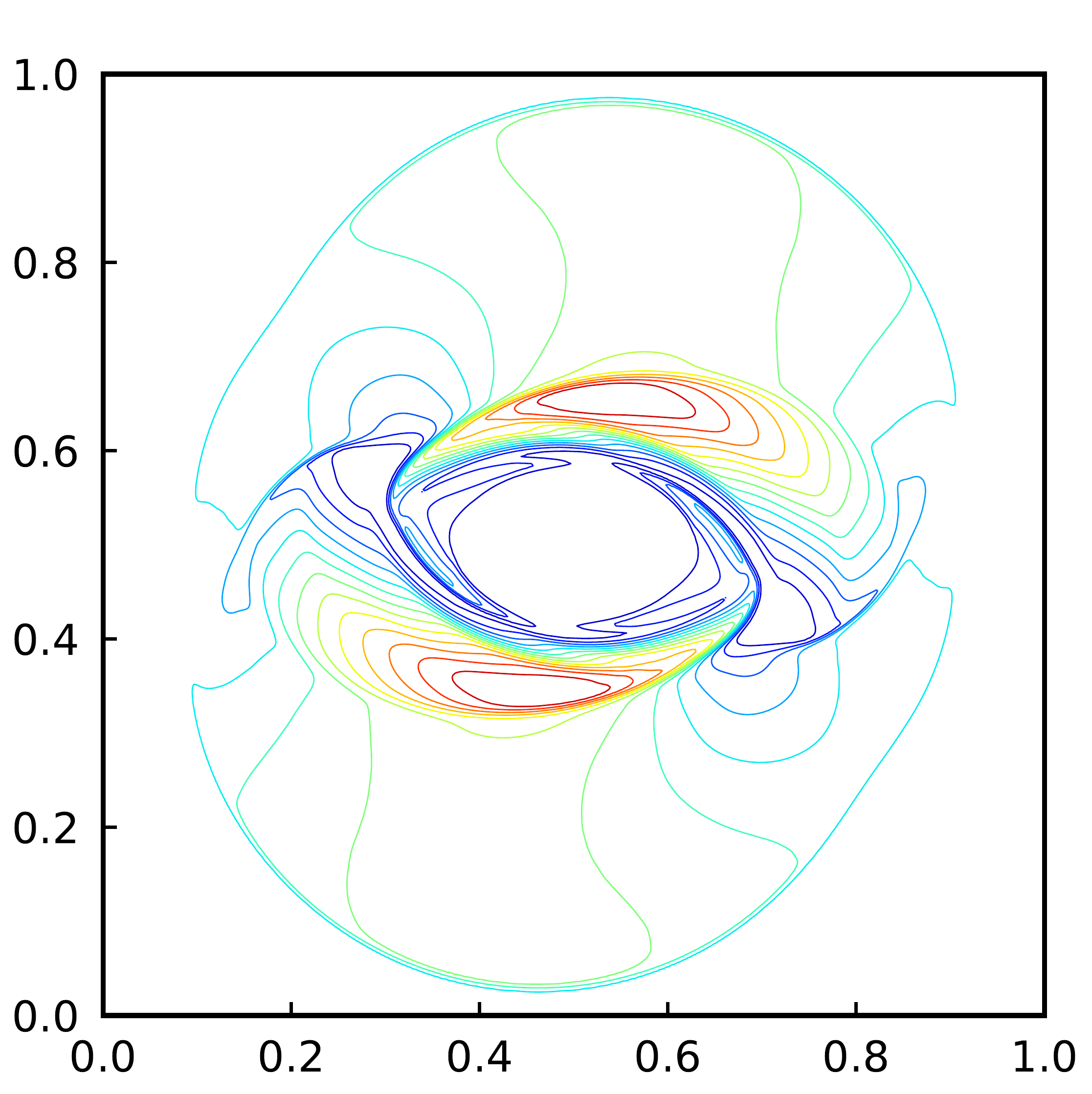}
			%			\caption{magnetic pressure for 5th-order method.}
		\end{subfigure}
		
		\caption{Contours of thermal pressure (left), Mach number (middle), and magnetic pressure (right). %Fifteen equally spaced contour lines: from 0.01 to 0.77 for thermal pressure; from 0.18 to 2.72 for Mach number; from 0.01 to 0.7 for magnetic pressure. 
			Top: second-order DDFPP. Bottom: fifth-order DDFPP.
		}
		\label{fig:Ex-Rotor}
	\end{figure} 
	
		\begin{figure}[htbp]
		\centering
		\begin{subfigure}{0.48\textwidth}
			\centering
			\includegraphics[width=0.8\textwidth]{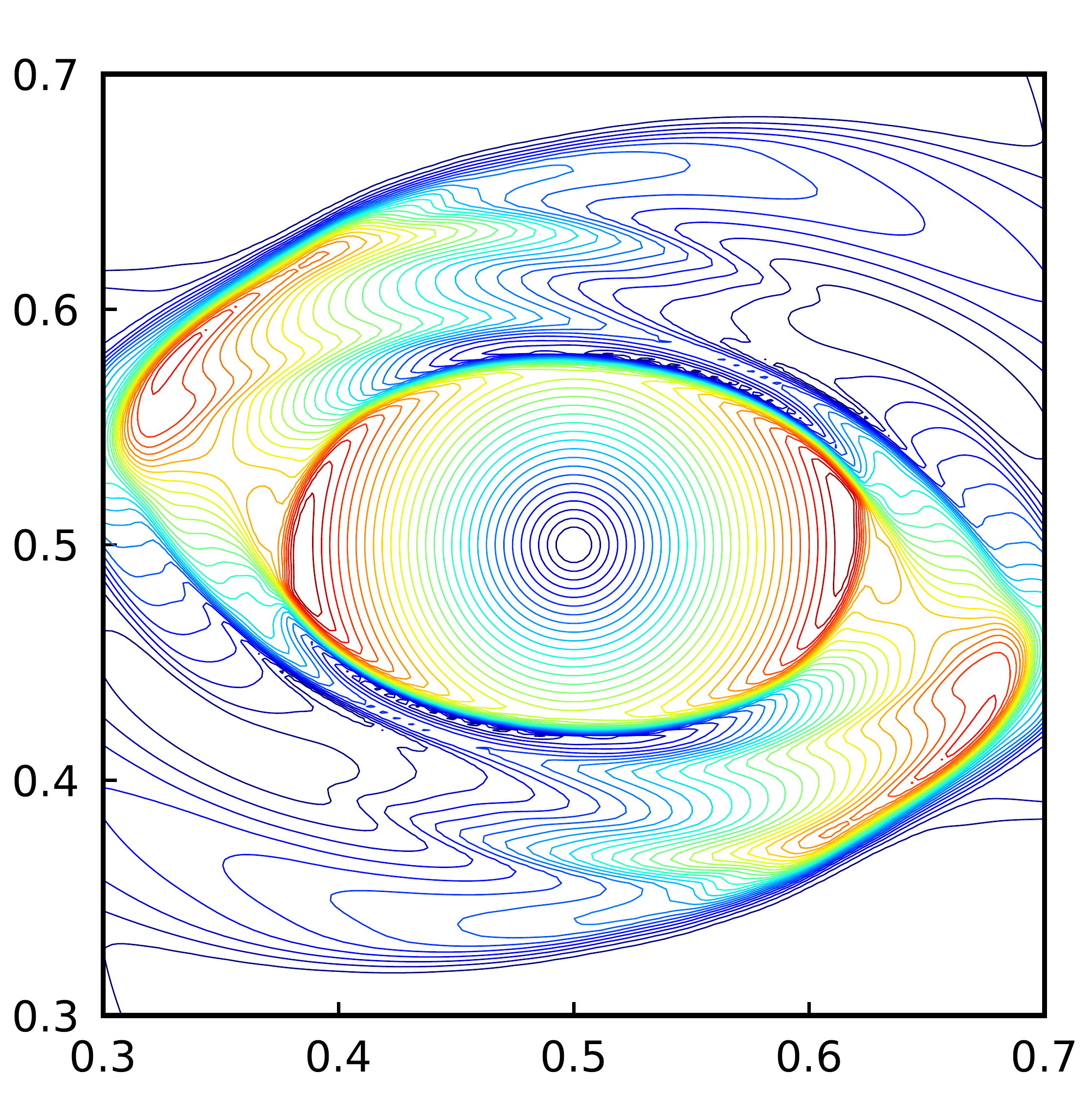}
			\caption{With DDF projection}
		\end{subfigure}
		%\hfill
		\begin{subfigure}{0.48\textwidth}
			\centering
			\includegraphics[width=0.8\textwidth]{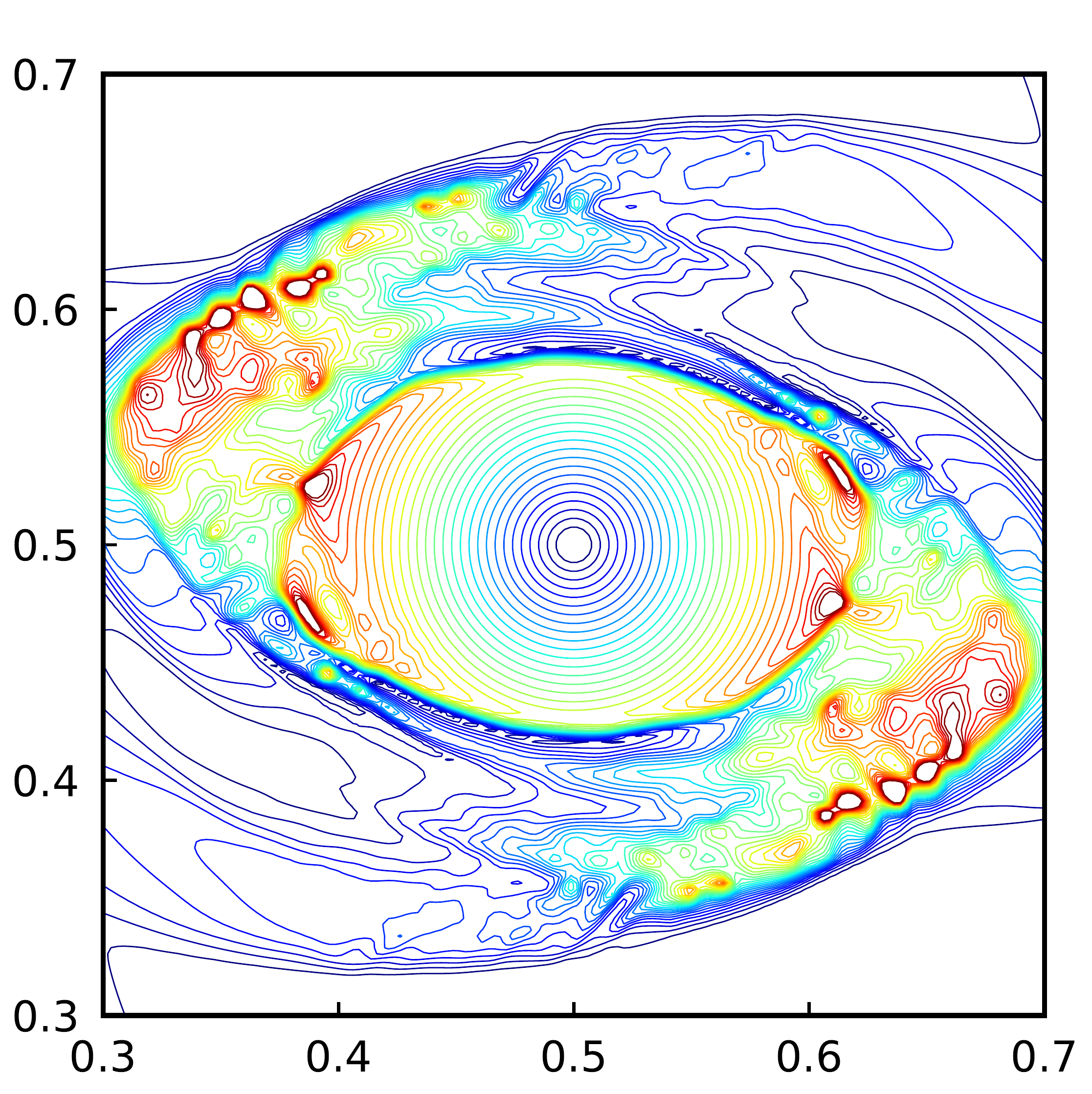}
			\caption{Without DDF projection}
		\end{subfigure}
		
		\caption{ \Cref{Ex:Rotor}: Zoom-in central part of Mach number at $t=0.295$. %Thirty equally spaced contour lines  from 0.18 to 2.72. 
		}
		\label{fig:Ex-Ro2}
	\end{figure}

\end{expl}

\begin{expl}[Blast problem]\label{Ex:Blast}
	%	{3,10,42,44,45,46}
	We now study a challenging blast problem \cite{BalsaraSpicer1999} with the adiabatic index $\gamma = 1.4$ to demonstrate the PP property of our numerical schemes.  The computational domain $[-0.5,0.5]^2$ is initially full of stationary fluid with piecewise constant pressure $p$, which has a strong circular jump on $x^2+y^2 = 0.1^2$. The initial conditions are given by
	\begin{equation*}
		(\rho, \mathbf{v}, \mathbf{B}) = \left(1, 0, 0, 0, \frac{100}{\sqrt{4\pi}}, 0, 0\right), \qquad
		p = 
		\begin{cases}
			10^3, & \sqrt{x^2+y^2} \leq 0.1, \\
			0.1, & \text{otherwise}.
		\end{cases}
	\end{equation*}
		The computational domain is partitioned into $200 \times 200$ uniform cells. 
		\Cref{fig:Ex-Blast1} displays the numerical results obtained by the proposed second-order 
		and fifth-order DDFPP schemes, in comparison with the results given by the fifth-order WENO scheme without DDF projection. 
		We can see that the flow patterns are well captured by the proposed DDFPP schemes and agree with those computed in 
		\cite{Christlieb,WuShu2018,WuShu2019,WuShu2020NumMath,WuJiangShu2022}. 
		Furthermore, the fifth-order DDFPP scheme exhibits higher resolution than the second-order one, and both DDFPP schemes are very robust. 
		However, without DDF projection, the fifth-order WENO scheme produces obvious oscillations and nonphysical structures 
		in the numerical solution.  
		\Cref{fig:Ex-BL-gdivB} presents the time evolution of the discrete divergence error $\varepsilon_{div}$. 
		As we can see, the DDF condition is enforced to machine accuracy when the DDF projection is employed, 
		but the divergence errors grow very fast if we do not perform the DDF projection.  
		We also notice that if we turn off the PP limiter, the code would break down due to negative pressure.

	\begin{figure}[htbp]
		\centering
		\begin{subfigure}{0.32\textwidth}
			\includegraphics[width=\textwidth]{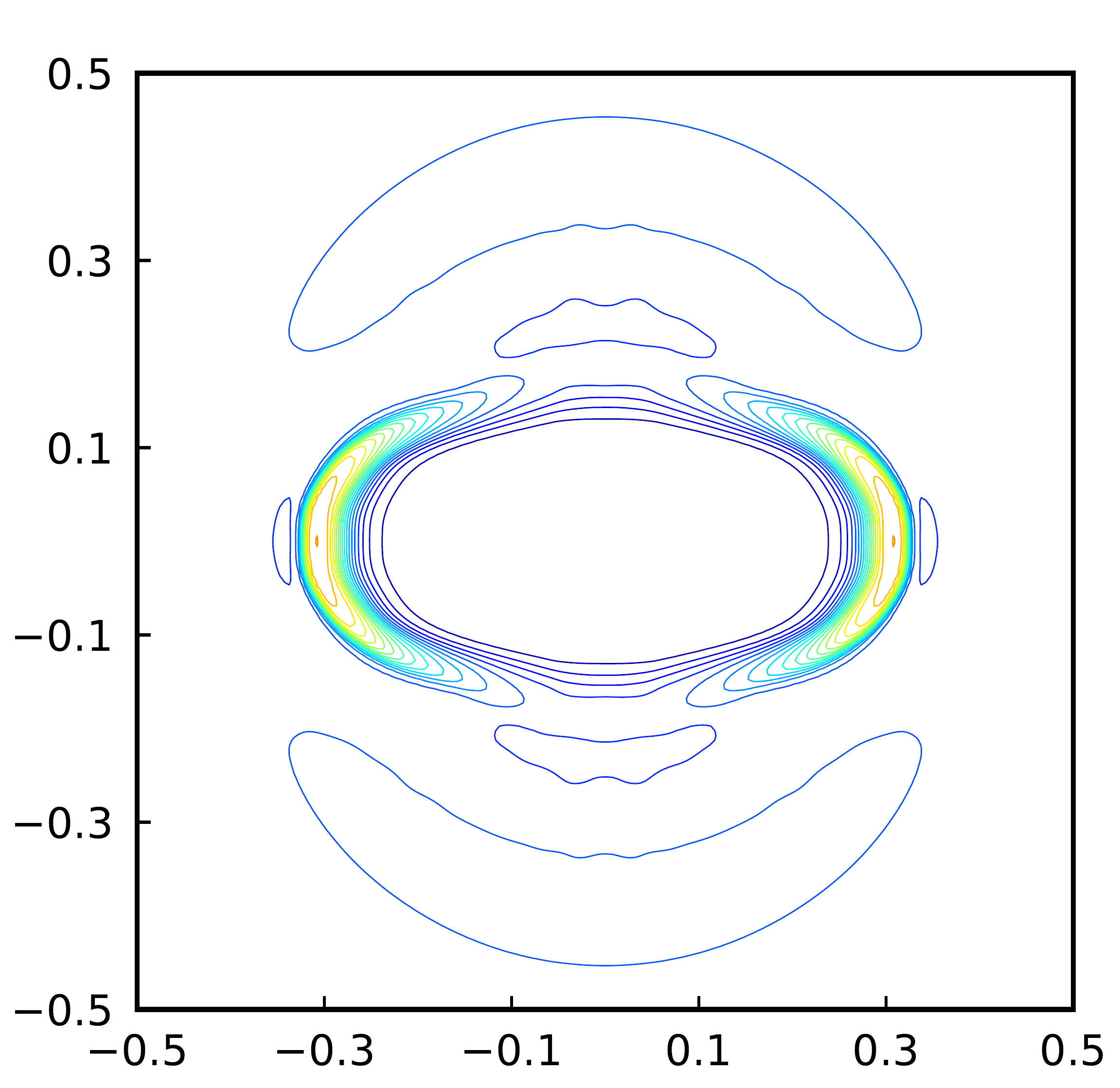}
			%			\caption{ density for 2rd-order method.}
		\end{subfigure}
		\hfill
		\begin{subfigure}{0.32\textwidth}
			\includegraphics[width=\textwidth]{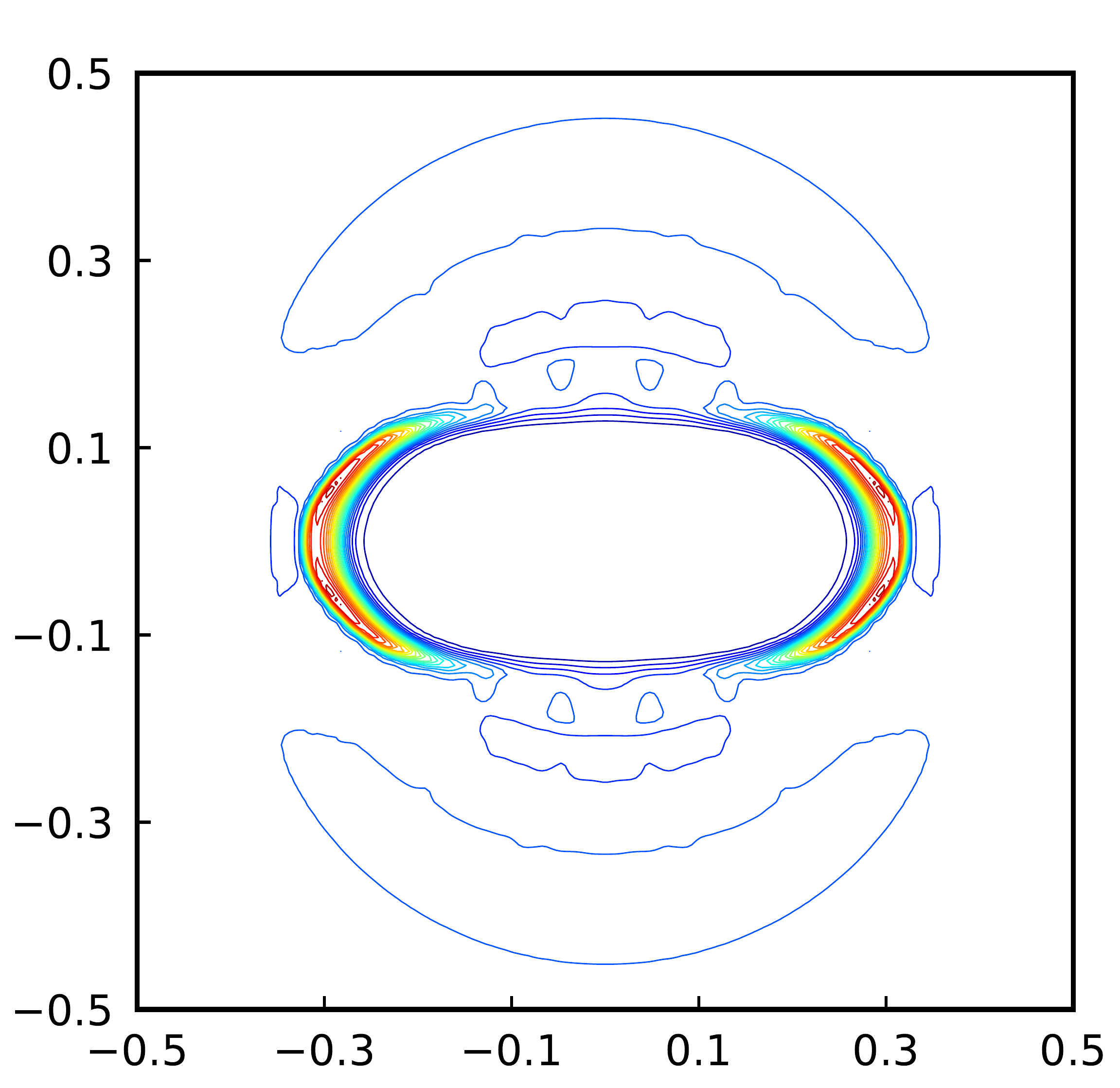}
			%			\caption{density for 5th-order method.}
		\end{subfigure}
		\hfill
	\begin{subfigure}{0.32\textwidth}
		\includegraphics[width=\textwidth]{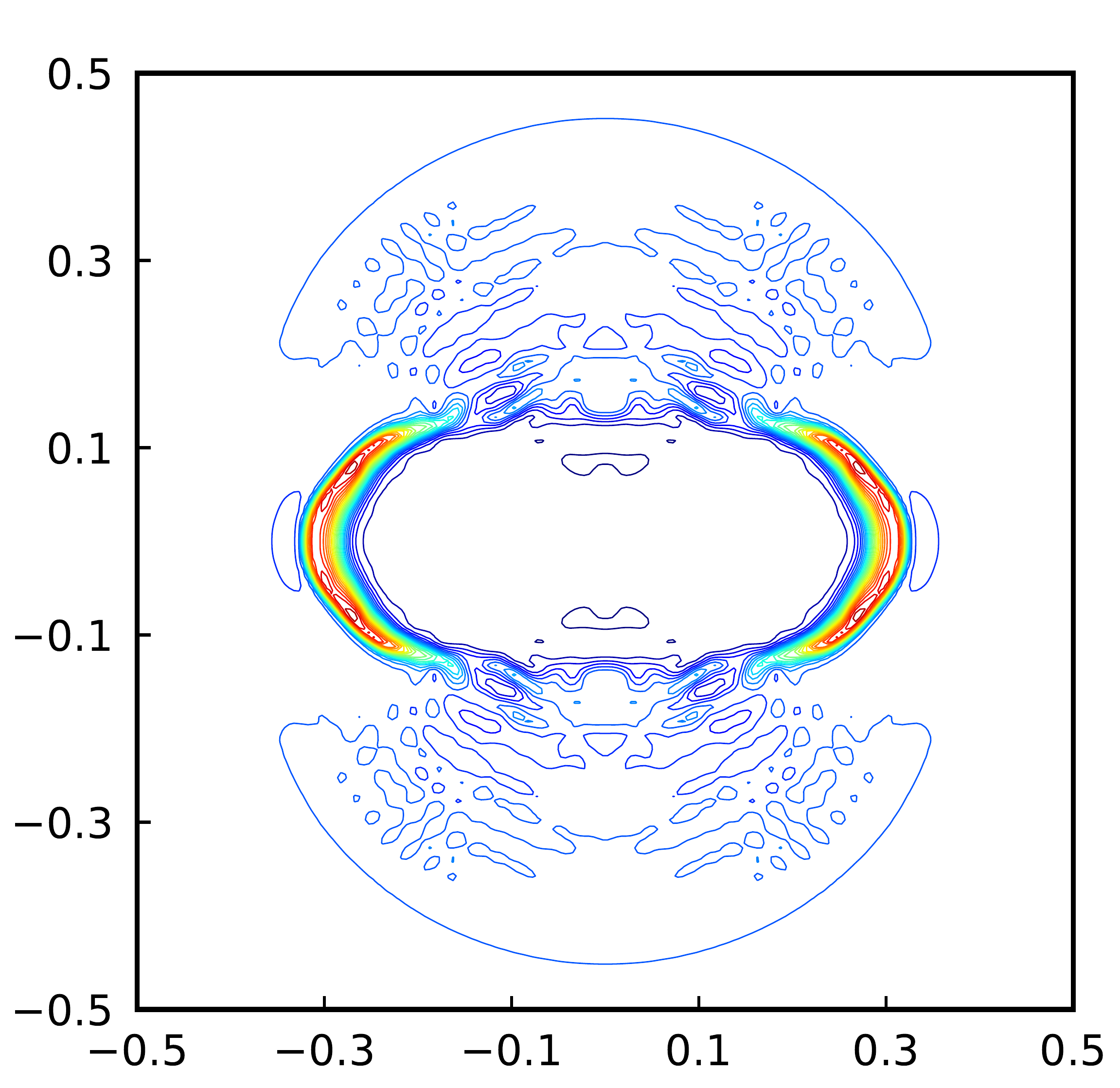}
		%			\caption{density for 5th-order method.}
	\end{subfigure}

		\caption{ \Cref{Ex:Blast}: The contour plots of the density. Twenty-five equally spaced contour lines from 0.2 to 4.54. %for density; from 0 to 252 for thermal pressure; from 217 to 582 for magnetic pressure.
			Left: second-order DDFPP scheme. Middle: fifth-order DDFPP scheme.  Right: fifth-order WENO scheme {\em without DDF projection}.
		}
		\label{fig:Ex-Blast1}
	\end{figure} 
	 
\end{expl}

\begin{expl}[Astrophysical jet]\label{Ex:Jet}
	The last example simulates a high Mach number jet problem in a strong magnetic field  \cite{WuShu2018,WuJiangShu2022}. The adiabatic index is set to be $\gamma=1.4$. Initially, the domain $[-0.5,0.5]\times[0,1.5]$ is full of the ambient plasma with 
	$(\rho, \mathbf{v}, \mathbf{B}, p) = (0.1\gamma, 0, 0, 0, 0, \sqrt{200}, 0, 1).$ 
	A high-speed jet initially locates at $x\in [-0.05,0.05]$ and $y=0$, it is injected in $y$-direction of the bottom boundary with the inflow jet condition
	$(\rho, \mathbf{v}, \mathbf{B}, p) = (\gamma, 0, 800, 0, 0, \sqrt{200}, 0, 1).$  
	In our test, the computational domain is taken as $[0,0.5]\times [0,1.5]$ and divided into $200\times 600$ cells. For the left boundary $x=0$, the reflecting boundary condition is imposed. The outflow conditions are applied on other boundaries. 
	 %the terminal time is $0.002$. 
	%	We test this problem using three different strengths of $B_0$, which are $B_0=\sqrt{200}$ with the plasma-beta $\beta=10^{-2}$, $B_0=\sqrt{2000}$ with the plasma-beta $\beta=10^{-3}$, and $B_0=\sqrt{20000}$ with the plasma-beta $\beta=10^{-4}$, respectively. 
	\Cref{fig:Ex-Jet1} shows the schlieren images of density logarithm $\log{\rho}$ obtained by our second-order and fifth-order DDFPP schemes. We can observe that the numerical results are comparable to those simulated in \cite{WuShu2018,WuShu2019,WuJiangShu2022}, while the fifth-order DDFPP scheme has a higher resolution than the second-order one. 
	This is a very challenging test, as  
	negative pressure can be easily produced in the simulation, due to the high Mach number and the presence of magnetic field. 
	If the PP limiter is turned off, we observe that the code would blow up quickly. 
	The proposed DDFPP schemes exhibit good robustness and high resolution for this problem. 
	
%	\begin{figure}[htbp]
%		\centering	
%		\begin{subfigure}{0.48\textwidth}
%			\includegraphics[width=0.9\textwidth]{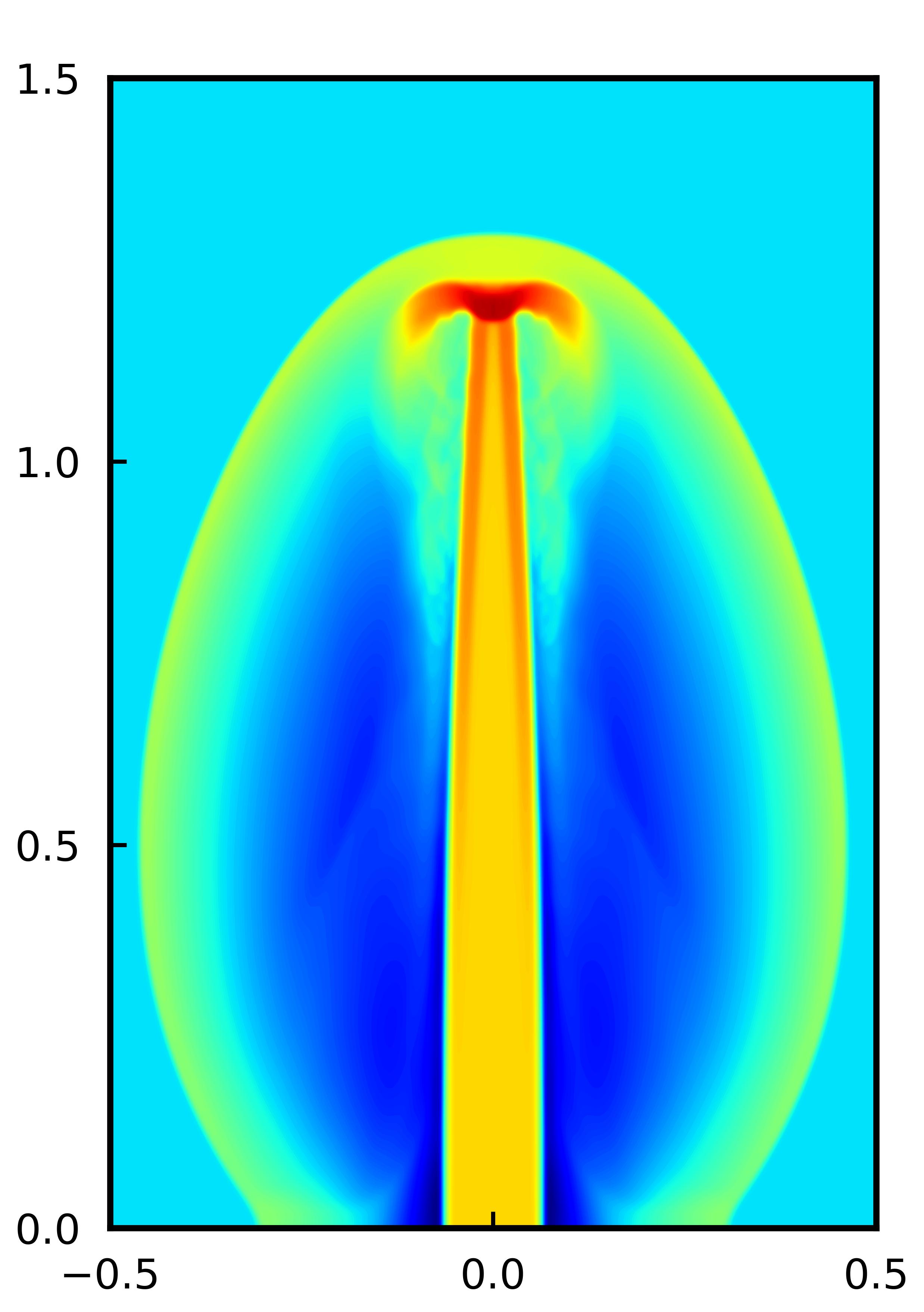}
%			%			\caption{ the magnitude of magnetic field for 2rd-order method.}
%		\end{subfigure}
%		%\hfill
%		\begin{subfigure}{0.48\textwidth}
%			\includegraphics[width=0.9\textwidth]{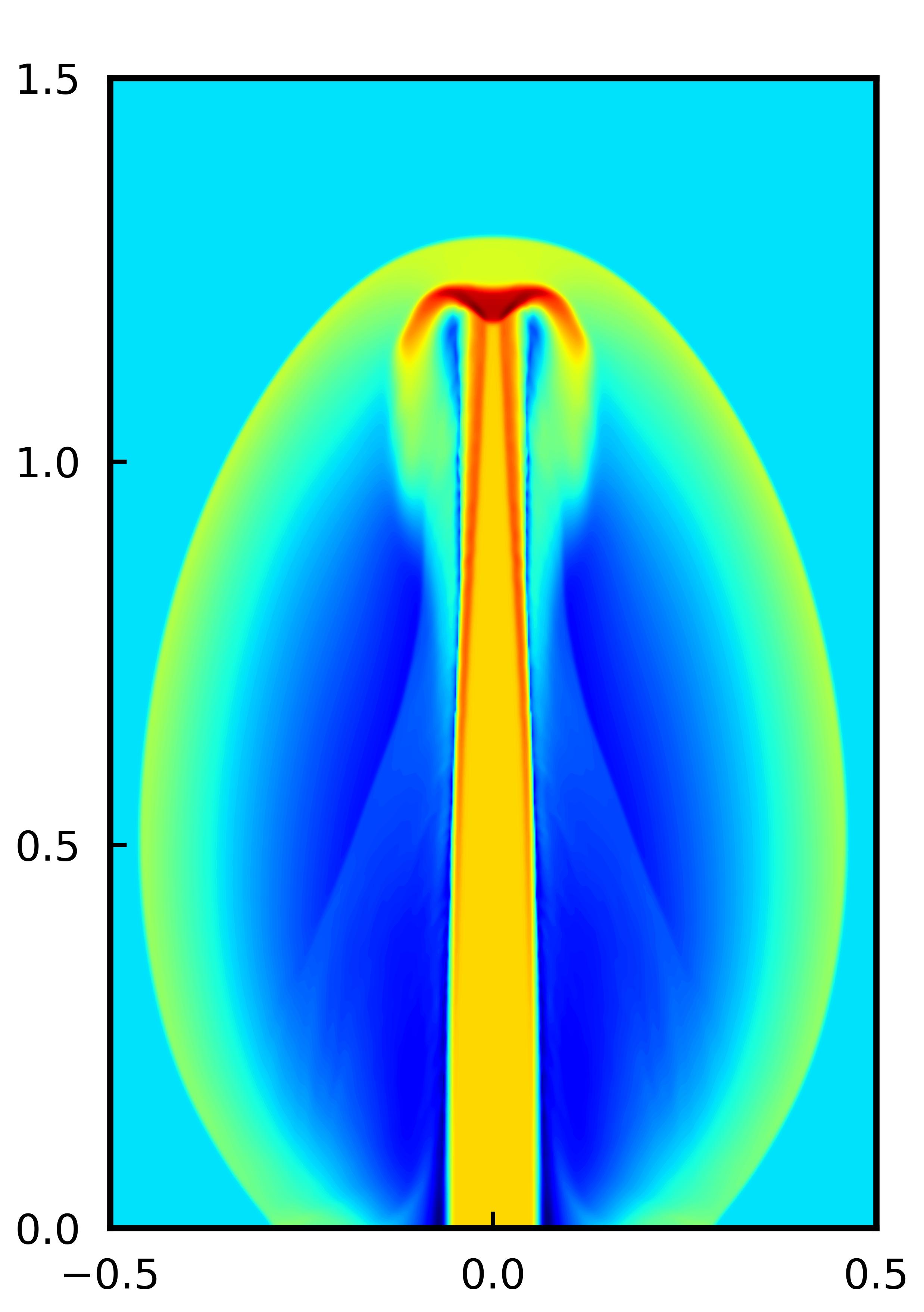}
%			%			\caption{the magnitude of magnetic field for 5th-order method.}
%		\end{subfigure}
%		
%		\caption{ \Cref{Ex:Jet}: The density logarithm at $t=0.002$. 
%			Left: second-order DDFPP scheme. Right: fifth-order DDFPP scheme.
%		}
%		\label{fig:Ex-Jet1}
%	\end{figure} 
	
	\begin{figure}[htbp]
		\centering
		\begin{subfigure}{0.32\textwidth}
			\includegraphics[width=\textwidth]{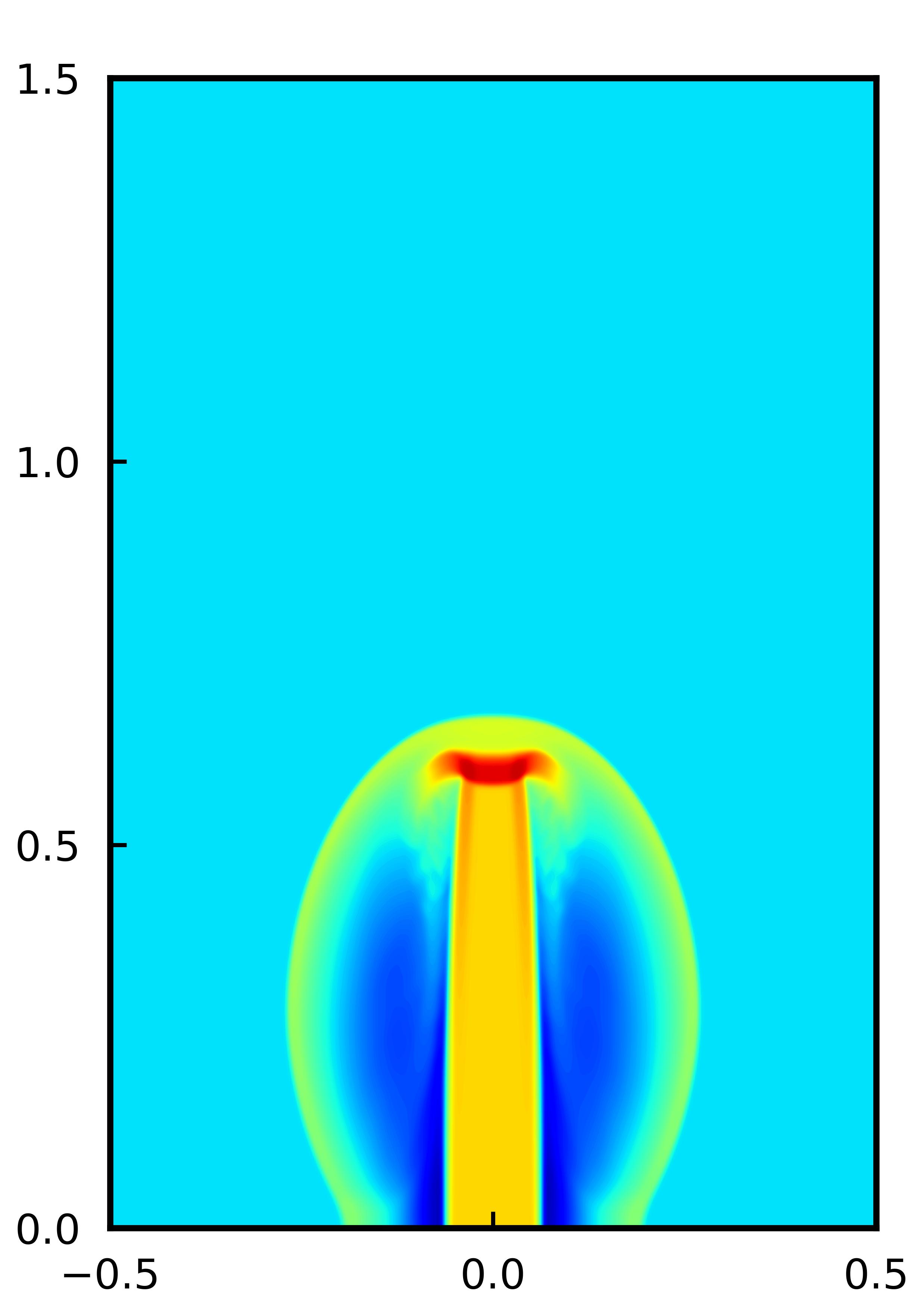}
			%			\caption{ density for 2rd-order method.}
		\end{subfigure}
		\hfill
		\begin{subfigure}{0.32\textwidth}
			\includegraphics[width=\textwidth]{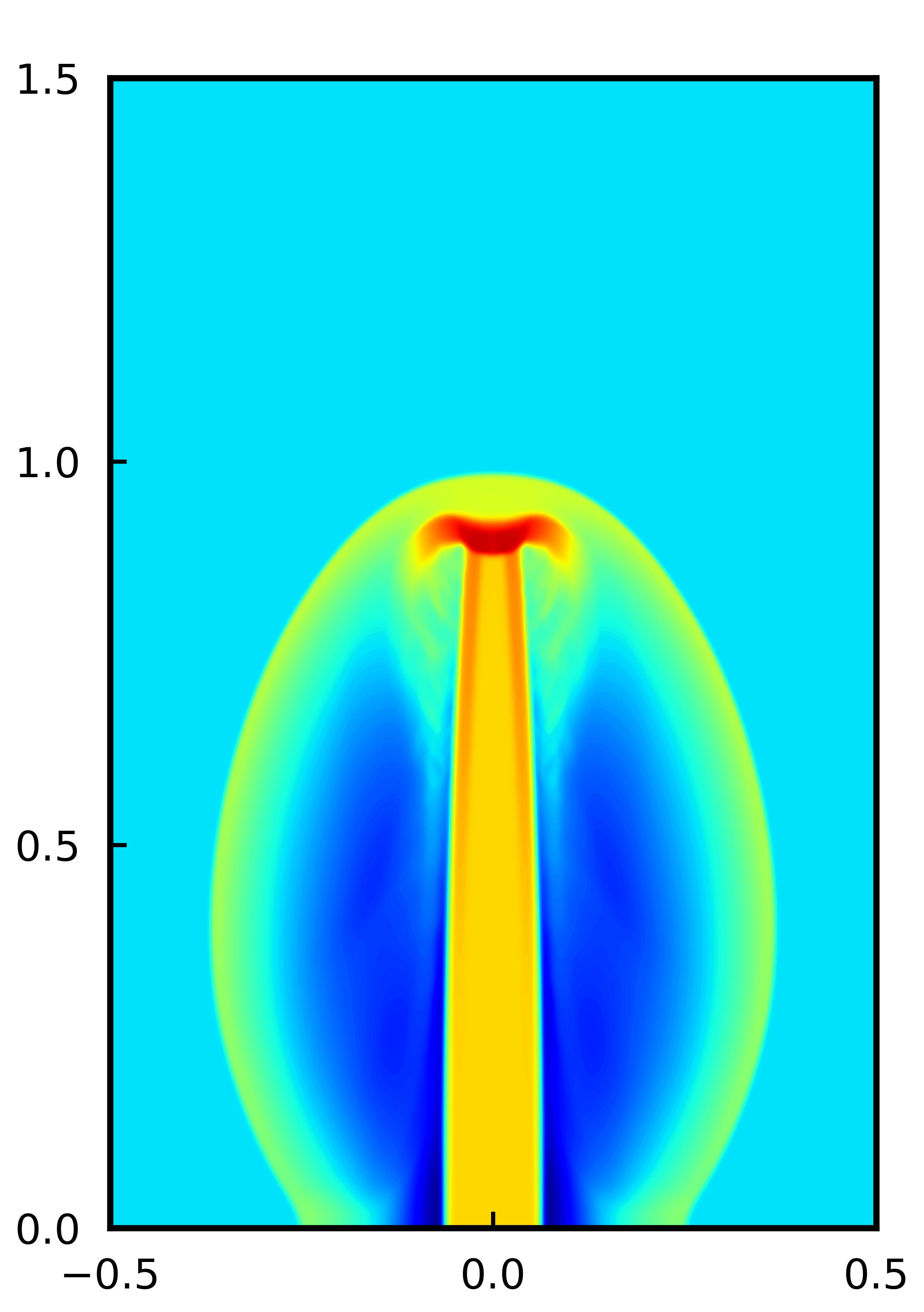}
			%			\caption{ thermal pressure for 2rd-order method.}
		\end{subfigure}
		\hfill
		\begin{subfigure}{0.32\textwidth}
			\includegraphics[width=\textwidth]{result/Jet800/t0_0020/2order_density.jpg}
			%			\caption{ the magnitude of magnetic field for 2rd-order method.}
		\end{subfigure}

		\begin{subfigure}{0.32\textwidth}
			\includegraphics[width=\textwidth]{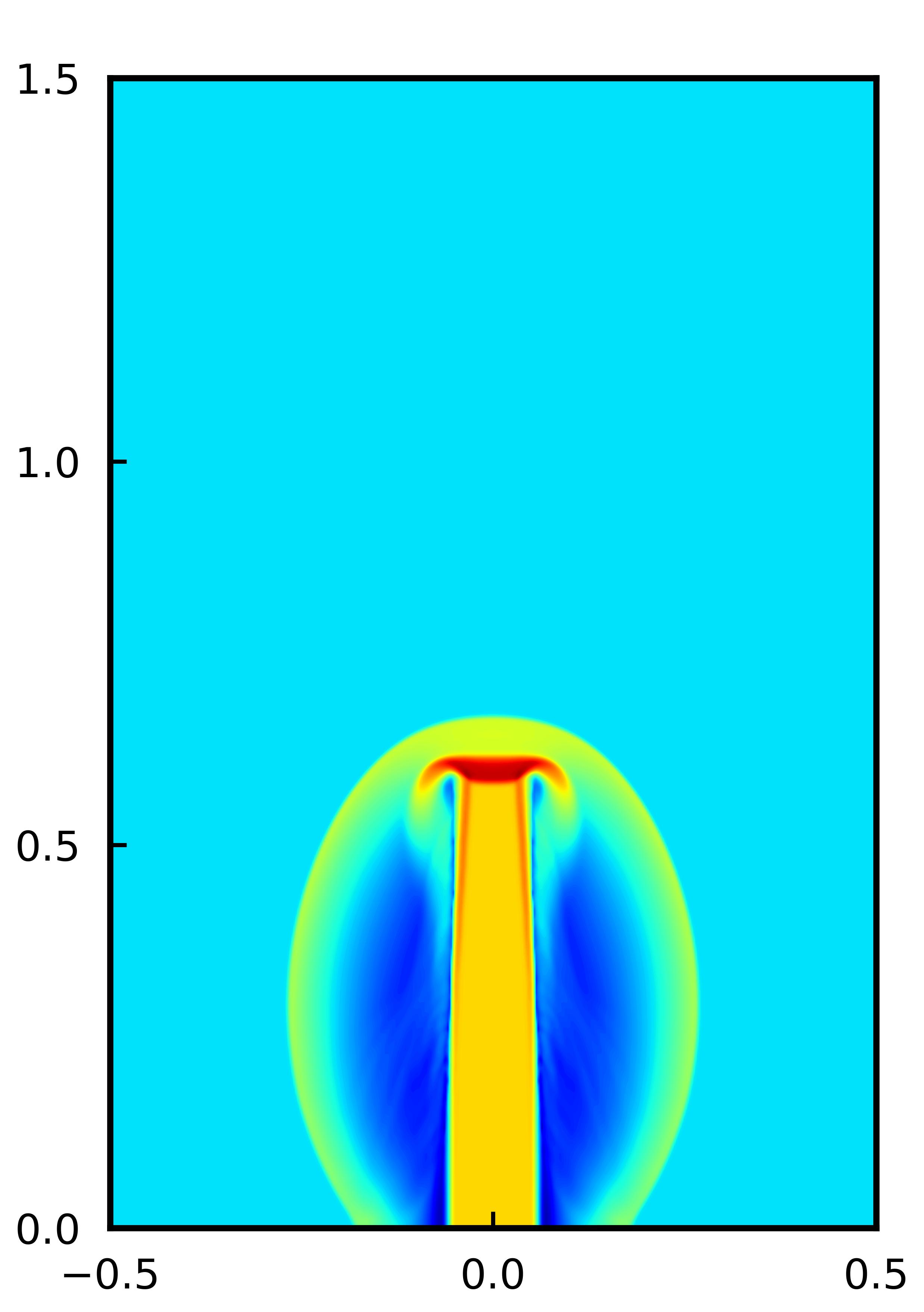}
			%			\caption{density for 5th-order method.}
		\end{subfigure}
		\hfill
		\begin{subfigure}{0.32\textwidth}
			\includegraphics[width=\textwidth]{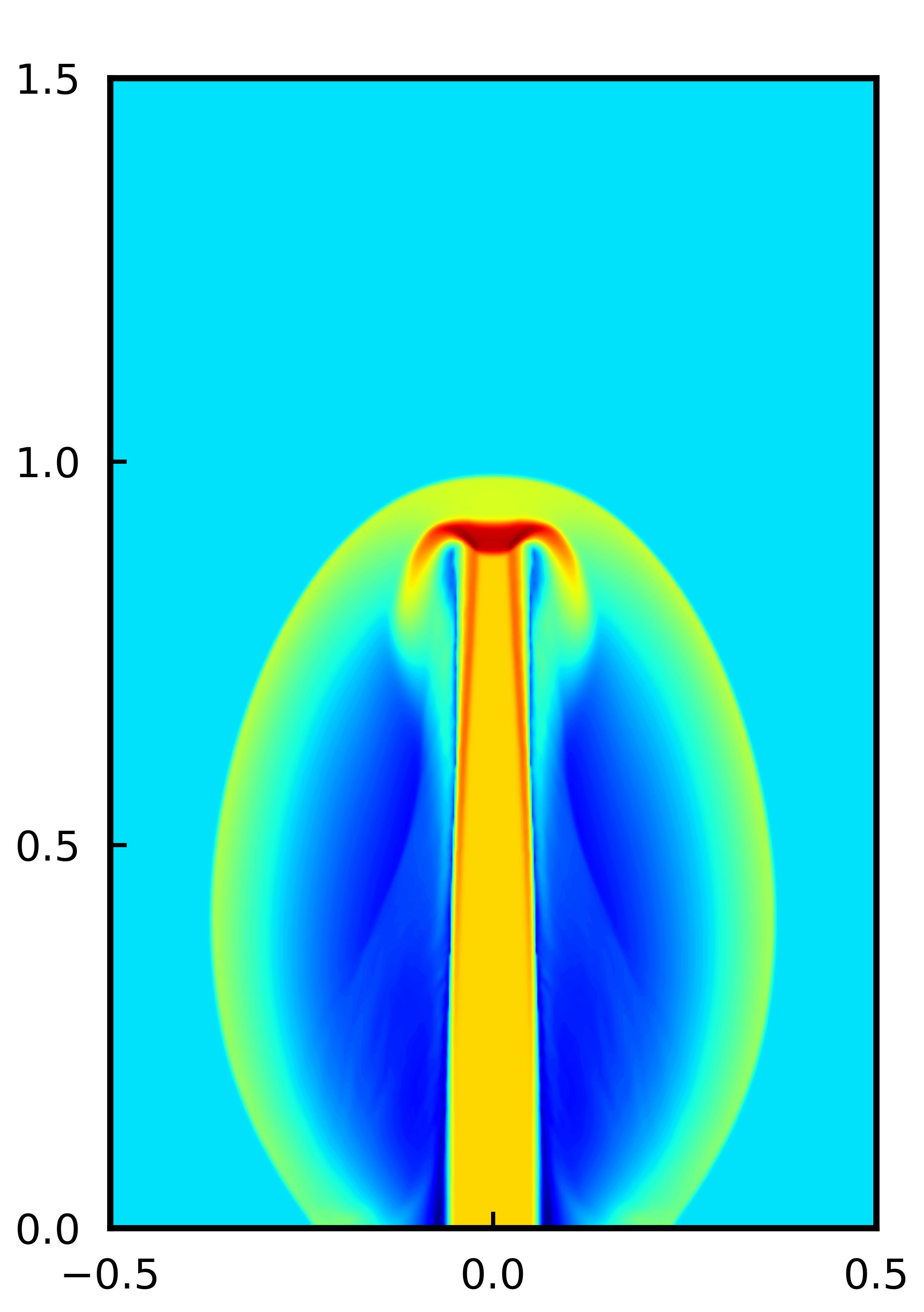}
			%			\caption{thermal pressure for 5th-order method.}
		\end{subfigure}
		\hfill
		\begin{subfigure}{0.32\textwidth}
			\includegraphics[width=\textwidth]{result/Jet800/t0_0020/5order_density.jpg}
			%			\caption{the magnitude of magnetic field for 5th-order method.}
		\end{subfigure}
		
		\caption{ \Cref{Ex:Jet}: Density logarithm at $t=0.001, 0.0015$, and $0.002$ (from left to right). 
			Top: second-order DDFPP scheme. Bottom: fifth-order DDFPP scheme.
		}
		\label{fig:Ex-Jet1}
	\end{figure} 
	
\end{expl}

\section{Conclusions}\label{Conclusions}

In this paper, we have developed a new high-order robust finite volume method for solving the ideal MHD equations, which govern the behavior of magnetized fluids in astrophysical and laboratory plasmas. The method stands out due to its ability to maintain both a DDF constraint and the PP property simultaneously. We have proposed a novel discrete projection technique that enforces the DDF condition by projecting the reconstructed point values at the cell interface into a DDF space, without using any approximation polynomials. This projection method is highly efficient, easy to implement, and particularly suitable for standard high-order finite volume WENO methods, which typically return only the point values in the reconstruction rather than approximation polynomials. 
We have also developed a new finite volume framework for constructing provably PP schemes for the ideal MHD system. 
The framework incorporates the  discrete projection, 
a suitable approximation to the Godunov-Powell source terms, and a simple PP limiter. 
We have provided rigorous analysis of the PP property of the proposed finite volume method, showing that the DDF condition and suitable approximation to the source terms eliminate the effect of magnetic divergence terms on the PP property. The analysis is challenging due to the nonlinearity of  $\mathcal{E}(\mathbf{U})$ and the relationship between the DDF and PP properties.  
We have overcome the challenges by using the GQL approach, which transfers the nonlinear constraint into a family of linear ones.
Finally, we have demonstrated the effectiveness of the proposed method through several benchmark and demanding numerical experiments. The numerical results have shown that the proposed method is robust and accurate, and confirmed the importance of the proposed DDF projection and PP techniques.

\renewcommand\baselinestretch{0.85}

	\bibliographystyle{siamplain}
	\bibliography{references_article}

\end{document}